\documentclass[12pt]{amsart}

\usepackage{amsmath,amssymb,amsthm,enumerate}
\usepackage[margin=1in]{geometry}

\synctex=1

\numberwithin{equation}{section}

\DeclareMathOperator{\rad}{rad}

\DeclareMathOperator{\Aut}{Aut}
\DeclareMathOperator{\SL}{SL}

\newcommand{\N}{\mathbb{N}}
\newcommand{\Z}{\mathbb{Z}}

\newcommand{\R}{\mathbb{R}}

\newcommand{\ve}{\varepsilon}
\newcommand{\vp}{\varphi}

\newcommand{\1}{^{-1}}
\newcommand{\n}{^{n}}
\newcommand{\f}[2]{\frac{#1}{#2}}

\newcommand{\Mod}[1]{\ (\mathrm{mod}\ #1)}

\newtheorem{theorem}{Theorem}[section]

\newtheorem{definition}[theorem]{Definition}
\newtheorem{conjecture}[theorem]{Conjecture}
\newtheorem{corollary}[theorem]{Corollary}
\newtheorem{proposition}[theorem]{Proposition}
\newtheorem{lemma}[theorem]{Lemma}

\numberwithin{theorem}{section}

\title[An asymptotic version of the prime power conjecture]{An asymptotic version of the prime power conjecture for perfect difference sets}
\author{Sarah Peluse}
\address{Mathematical Institute, University of Oxford, Radcliffe Observatory Quarter, Woodstock Road, Oxford OX2 6GG, United Kingdom}
\email{sarah.peluse@maths.ox.ac.uk}

\begin{document}
\begin{abstract}
We show that the number of positive integers $n\leq N$ such that $\Z/(n^2+n+1)\Z$ contains a perfect difference set is asymptotically $\f{N}{\log{N}}$.
\end{abstract}

\maketitle

\section{Introduction}\label{sec1}

A subset $D\subset \Z/m\Z$ is a \textit{perfect difference set} if every nonzero $a\in\Z/m\Z$ can be written uniquely as the difference of two elements of $D$. For example, $\{1,2,4\}\subset\Z/7\Z$ is a perfect difference set. By a simple counting argument, if $D\subset\Z/m\Z$ is a perfect difference set, then we must have $m=n^2+n+1$ and $|D|=n+1$ for some integer $n$. In this situation, we say that the perfect difference set $D$ has \textit{order} $n$. Aside from being large Sidon sets, so that their existence and construction is of interest in additive number theory, perfect difference sets are also important objects of study in design theory and finite geometry (see the detailed account in~\cite{JungnickelPott1999}). Indeed, any perfect difference set $D$ of order $n$ gives rise to a finite projective plane of order $n$ by taking the set of points to be $\Z/(n^2+n+1)\Z$ and the set of lines to be translates of $D$. 

Singer~\cite{Singer1938} constructed perfect difference sets of every prime power order, and it is an old conjecture that these are the only orders for which perfect difference sets exist (see, for example,~\cite{Hall1947},~\cite{EvansMann1951}, or~\cite[C10]{Guy1994}). This conjecture is now referred to in the literature as the ``prime power conjecture'', and has been verified computationally for all $n$ up to $2$ billion by Baumert and Gordon~\cite{BaumertGordon2004}.
\begin{conjecture}[The prime power conjecture]\label{conj1.1}
An integer $n\geq 2$ is the order of a perfect difference set if and only if $n$ is a prime power.
\end{conjecture}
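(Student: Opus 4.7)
The ``if'' direction is the classical Singer construction: the cyclic quotient $\F_{q^3}^\times/\F_q^\times\cong \Z/(q^2+q+1)\Z$ acts regularly on the points of the projective plane $\mathrm{PG}(2,q)$, and the preimage of any line is a perfect difference set of order $q$. For the ``only if'' direction, I fix a perfect difference set $D\subset\Z/m\Z$ with $m=n^2+n+1$, and encode it as $\delta=\sum_{d\in D}x^d$ in the group ring $\Z[x]/(x^m-1)$. The defining property translates to $\delta\cdot\delta^{(-1)}=n+J$, where $\delta^{(-1)}=\sum_{d\in D}x^{-d}$ and $J=1+x+\cdots+x^{m-1}$. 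Applying a nontrivial character $\chi$ yields the key identity $|\chi(\delta)|^2=n$ in the cyclotomic ring $\Z[\zeta_m]$, which is the fundamental Diophantine input for everything that follows.

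Next I would invoke the first multiplier theorem of M.~Hall: since $\lambda=1$, every prime $p\mid n$ is a multiplier of $D$ (the coprimality condition $\gcd(p,m)=1$ is automatic, as $m\equiv 1\pmod n$), so $pD=D+t_p$ for some shift $t_p$, and after translating $D$ we may arrange that each such $p$ fixes $D$ setwise. Writing $n=p_1^{a_1}\cdots p_k^{a_k}$, the multiplicative subgroup $\Gamma=\langle p_1,\ldots,p_k\rangle\leq(\Z/m\Z)^\times$ then acts on $D$, and since $n\in\Gamma$ has order exactly $3$ in $(\Z/m\Z)^\times$ (from $n^3-1=(n-1)m$), the orbit structure of $\Gamma$ on $\Z/m\Z$ is highly constrained and $D$ decomposes as a union of $\Gamma$-orbits whose sizes must sum to $n+1$. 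To force $k=1$, I would try to show that the simultaneous action of coprime multipliers $p_1,\ldots,p_k$ imposes incompatible congruences on $\chi(\delta)$ modulo prime ideals above the distinct $p_i$ in $\Z[\zeta_m]$, incompatible with $|\chi(\delta)|^2=n$ for some nontrivial $\chi$; the hope would be to combine the orbit constraint with an analysis of the factorisation of $\delta$ in $\Z[\zeta_m]$ refined enough to discriminate between prime-power and non-prime-power $n$.

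The main obstacle is that this is essentially the classical attack on the prime power conjecture, and despite decades of successive refinement (Mann's tests, Wilbrink's congruences, Bruck--Ryser--Chowla-type necessary conditions, and the exhaustive computations of Baumert and Gordon for $n\leq 2\cdot 10^9$) no proof has been found in full generality. Closing the gap would require either a genuinely new algebraic obstruction beyond the multiplier/character framework, or an exploitation of $|\chi(\delta)|^2=n$ fine enough to rule out every composite $n$ with multiple prime factors simultaneously. It is precisely this impasse that the present paper sidesteps by counting potential orders asymptotically rather than eliminating each non-prime-power $n$ individually.
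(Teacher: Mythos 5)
The statement you were asked about is Conjecture~\ref{conj1.1}, and the paper does not prove it --- it is stated as a conjecture precisely because the ``only if'' direction is a long-standing open problem; the paper's actual contribution (Theorem~\ref{thm1.2}) is the much weaker asymptotic density statement, obtained from Mann's criterion (Theorem~\ref{thm2.1}) and the quartic-residue condition of Lemma~\ref{lem2.2} via sieve arguments. Your ``if'' direction is fine and is exactly Singer's construction, which the paper cites. Your ``only if'' direction, however, is not a proof: the pivotal step --- showing that the action of several coprime prime multipliers $p_1,\dots,p_k$ forces congruences on $\chi(\delta)$ in $\Z[\zeta_m]$ that contradict $|\chi(\delta)|^2=n$ when $k\geq 2$ --- is stated only as a hope, and no argument is given (nor is one known). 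Indeed the ingredients you list (the group-ring identity $\delta\,\delta^{(-1)}=n+J$, Hall's multiplier theorem, the order-$3$ element $n$ in $(\Z/m\Z)^\times$, and the orbit decomposition of a translate of $D$ fixed by all multipliers) are exactly the tools from which Mann's theorem and the paper's Lemma~\ref{lem2.2} are derived, and they are known to yield only necessary conditions such as quadratic/quartic residue constraints, not the full conjecture.

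So the gap is not a repairable technicality but the absence of the central idea: nothing in your sketch distinguishes a composite $n$ with two prime factors satisfying all the known multiplier and residue tests from a genuine prime power, and the paper itself is evidence that the best currently achievable along these lines is a density-zero statement for non-prime-power orders, with an $o(1)$ error term of size $O(\exp(-C\log_5 N/\log_6 N))$ rather than an elimination of each individual $n$. Your final paragraph acknowledges this honestly, which is the correct assessment: the proposal should be read as a survey of the standard attack and of why it stalls, not as a proof of the conjecture.
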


There are many partial results towards the prime power conjecture, though the conjecture itself seems out of reach. Some of the more general results say that all or almost all of the integers in certain congruence classes cannot be the order of a perfect difference set. For example, Bruck and Ryser~\cite{BruckRyser1949} showed that if $n$ is the order of a projective plane and $n\equiv 1,2\Mod{4}$, then $n$ can be written as the sum of two squares, Jungnickel and Vedder~\cite{JungnickelVedder1984} showed that if $n$ is the order of a perfect difference set and $2\mid n$, then $n=2$, $n=4$, or $8\mid n$, and Willbrink~\cite{Wilbrink1985} showed that if $n$ is the order of a perfect difference set and $3\mid n$, then $n=3$ or $9\mid n$. There are apparently no results saying that the set of orders of perfect difference sets has density zero in the integers, however.

In this paper, we prove that the set of orders of perfect difference sets has the asymptotic size predicted by the prime power conjecture.
\begin{theorem}\label{thm1.2}
  We have
  \[
\#\{n\leq N:\Z/(n^2+n+1)\Z\text{ contains a perfect difference set}\}=(1+o(1))\f{N}{\log{N}}.
  \]
\end{theorem}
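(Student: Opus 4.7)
The lower bound is immediate from Singer's construction~\cite{Singer1938}: every prime power order admits a perfect difference set, and the number of prime powers $n\leq N$ is $\pi(N)+O(\sqrt{N})=(1+o(1))\f{N}{\log N}$. The substance of the theorem is therefore the matching upper bound --- namely, that the number of \emph{non-prime-power} $n\leq N$ admitting a perfect difference set of order $n$ is $o(\f{N}{\log N})$.

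The plan for the upper bound is to exploit the multiplier theorem: if $D\subset\Z/(n^2+n+1)\Z$ is a perfect difference set of order $n$, then every prime $p\mid n$ is a multiplier of $D$, and since $\gcd(n,n^2+n+1)=1$, some translate of $D$ is fixed setwise by multiplication by every divisor of $n$. When $n$ has two distinct prime divisors $p_1,p_2$, the subgroup $M=\langle p_1,p_2\rangle\subset(\Z/(n^2+n+1)\Z)^*$ then acts by multiplication on $\Z/(n^2+n+1)\Z$ with $D$ a union of $M$-orbits, and the requirement $|D|=n+1$ forces $n+1$ to decompose as a sum of $M$-orbit sizes. Projecting to a prime factor $\ell$ of $n^2+n+1$, this turns into an explicit arithmetic constraint on the multiplicative orders of $p_1$ and $p_2$ in $(\Z/\ell\Z)^*$, and the goal becomes showing that this constraint is almost never met.

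To carry out this counting I would first use a standard sieve for values of $X^2+X+1$ to restrict to those $n\leq N$ for which $n^2+n+1$ has a prime factor $\ell$ of at least moderate size, and then, for each candidate $\ell$, bound the number of non-prime-power $n\leq N$ for which the residues of the prime divisors of $n$ modulo $\ell$ are compatible with the required orbit decomposition, summing over $\ell$ and over the possible small-prime factors of $n$. The main obstacle is precisely this final uniform estimate: to beat the trivial count of non-prime-powers by a full factor of $\log N$, one needs uniform control --- in both $\ell$ and the residues of the prime divisors of $n$ modulo $\ell$ --- on the multiplicative orders of small primes in $(\Z/\ell\Z)^*$ as $\ell$ ranges over divisors of $n^2+n+1$. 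This is the sort of equidistribution input that typically requires combining sieves for polynomial values with Artin-conjecture-style estimates on multiplicative orders, and it is where the genuine content of the argument will lie.
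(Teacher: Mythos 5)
Your lower bound is fine, and your general instinct --- use the multiplier theorem to force compatibility conditions between the prime divisors of $n$ and the prime divisors of $n^2+n+1$, then count by sieving over the prime factors of $n^2+n+1$ --- is indeed the paper's starting point. But there is a genuine gap: you never extract a concrete, usable arithmetic constraint, and the quantitative input you say the argument will need does not exist. The "orbit-size decomposition" condition you describe is left vague, and the tool you propose to control it --- uniform, Artin-conjecture-style information on the multiplicative orders of small primes modulo primes $\ell\mid n^2+n+1$, uniformly in $\ell$ and in the residues involved --- is not available unconditionally, so the plan stalls exactly at the step you yourself identify as "where the genuine content will lie." The paper avoids this entirely: Mann's theorem (Theorem~\ref{thm2.1}) gives the clean condition that every $p\mid n$ is a quadratic residue modulo every $q\mid n^2+n+1$, a condition of fixed density $\tfrac12$ per prime, which converts the count into sieve problems of dimension exceeding $1$; no control of multiplicative orders is ever needed.

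Moreover, the hardest regimes are not addressed by your sketch at all. When $n^2+n+1$ is a prime $\equiv 1\Mod{4}$, quadratic reciprocity makes the quadratic-residue condition automatically satisfied, so one must upgrade to a quartic-residue condition via the multiplier theorem (Lemma~\ref{lem2.2}), then use quartic reciprocity to translate it into congruence conditions on representations $n^2+n+1=x^2+4y^2$, and finally combine the Selberg sieve with a lattice-point count, with power-saving error term, on the hyperboloid $4x^2+16y^2-z^2=3$ (an adaptation of Hooley's method using Duke--Friedlander--Iwaniec). Similarly, when $n^2+n+1=q_1q_2$ with both primes of size comparable to $n$, the residue conditions alone are too weak and the paper instead runs an enveloping sieve supported by a count of lattice points on $y^2-4xz=-3$ in boxes with divisibility constraints. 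Without concrete replacements for these ingredients, your proposal is an outline whose decisive estimates are missing, not a proof of the theorem.
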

This gives further evidence for the truth of the prime power conjecture, and implies that if counterexamples exist, they must be sparser than the primes. The proof of Theorem~\ref{thm1.2} gives the explicit expression $O(\exp(-C\f{\log\log\log\log\log{N}}{\log\log\log\log\log\log{N}}))$ for the $o(1)$ term above, though we made no serious attempt to optimize this bound. 

To prove Theorem~\ref{thm1.2}, we begin by splitting the set of $n\leq N$ up into various subsets depending on the prime factorization of $n^2+n+1$. To each of these sets, we apply one of two results from the theory of perfect difference sets. Both say that if $n$ is the order of a perfect difference set, then certain relations between the prime factors of $n$ and the prime factors of $n^2+n+1$ must hold. Applying these results thus turns the problem of proving Theorem~\ref{thm1.2} into that of bounding the size of sets defined by various number-theoretic conditions.

The remainder of this paper is organized as follows. In Section~\ref{sec2}, we state the results on perfect difference sets used in the proof of Theorem~\ref{thm1.2} and, in Section 3, give an outline of the argument. We count the number of non-prime-power orders $n\leq N$ of perfect difference sets such that $n^2+n+1$ has at least three, exactly two, and exactly one prime factor(s) in Sections~\ref{sec3},~\ref{sec4}, and~\ref{sec5}, respectively. The arguments in Sections~\ref{sec4} and~\ref{sec5} depend on estimates for the number of lattice points satisfying various size and congruence restrictions on certain hyperboloids. We delay the proofs of these lattice point counting results to Sections~\ref{sec6} and~\ref{sec7}.

\section*{Acknowledgments}
The author thanks Ben Green and Kannan Soundararajan for helpful conversations and comments on earlier drafts of this paper and the anonymous referee for many useful suggestions. The author is supported by the NSF Mathematical Sciences Postdoctoral Research Fellowship Program under Grant No. DMS-1903038

\section{Notation and preliminaries}\label{sec2}

We will first set some notation. For each $k\in\N$, let $\log_k$ denote the $k$-fold iterated logarithm, so that, for example, $\log_3x=\log\log\log{x}$. No logarithms to any base other than $e$ appear in this paper, so confusion should not arise. If $D\subset\Z/m\Z$ and $t,a\in\Z/m\Z$, we define the sets $t\cdot D$ and $a+D$ to be $\{td:d\in D\}$ and $\{a+d:d\in D\}$, respectively. Throughout this paper, $p$ and $q$ will always denote prime numbers. For any Dirichlet character $\chi$ and $y>0$, let $L(1,\chi;y)$ denote the Euler product $\prod_{p<y}(1-\chi(p)/p)\1$. For any $a\in\Z/p\Z$, we will use $\delta_{a}$ to denote the function that is $1$ at $a$ and $0$ otherwise. For every prime $q>2$, set $q^*:=(-1)^{\f{q-1}{2}}q$. For every $n,k\in\N$, we let $p_k(n)$ denote the $k^{th}$ smallest prime factor of $n$ with the convention that $p_k(n)=\infty$ if $\omega(n)<k$, so that $p_1(n)<\dots<p_k(n)$ whenever $\omega(n)\geq k$. Letting $\mathcal{P}$ denote the set of prime powers, we set
\[
S(N):=\{n\leq N:\Z/(n^2+n+1)\Z\text{ contains a perfect difference set}\}\setminus\mathcal{P},
\]
the set of non-prime-power orders of perfect difference sets in $\{1,\dots,N\}$. By Singer's construction, to prove Theorem~\ref{thm1.2} it suffices to show that $\#S(N)=o(\f{N}{\log{N}})$.

We now state the two results from the theory of perfect difference sets used in this paper. The first is due to Mann~\cite{Mann1952}.
\begin{theorem}[Mann,~\cite{Mann1952}]\label{thm2.1}
Let $n$ be the order of a perfect difference set, and assume that $n$ is not a perfect square. If $p$ and $q$ are primes such that $p\mid n$ and $q\mid n^2+n+1$, then $p$ is a quadratic residue modulo $q$.
\end{theorem}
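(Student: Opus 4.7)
The natural approach combines the group-ring reformulation of the difference set condition with the classical multiplier theorem. Setting $m = n^2+n+1$ and identifying $D$ with $\sum_{d\in D}[d] \in \mathbb{Z}[\mathbb{Z}/m\mathbb{Z}]$, the perfect difference set property is equivalent to
\[
D\cdot D^{(-1)} = n\cdot [0] + G_m, \qquad G_m := \sum_{g\in\mathbb{Z}/m\mathbb{Z}} [g],
\]
and applying any nontrivial character of $\mathbb{Z}/m\mathbb{Z}$ yields $|\chi(D)|^2 = n$.

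Since $\lambda = 1$ and $\gcd(n,m) = 1$, Hall's multiplier theorem guarantees that every prime $p\mid n$ is a numerical multiplier of $D$, so $pD = D + c_p$ for some $c_p\in\mathbb{Z}/m\mathbb{Z}$. Comparing sums of elements on both sides gives $(p-1)s = (n+1)c_p$, where $s := \sum_{d\in D}d$; since $\gcd(n+1,m) = 1$ (because $m = n(n+1)+1$), we can divide and obtain $c_p = (p-1)s/(n+1)$. A short check then shows that the single translate $D^* := D - s/(n+1)$ is fixed by every such multiplier: $pD^* = D^*$ for every prime $p\mid n$ simultaneously.

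Now project along $\mathbb{Z}/m\mathbb{Z}\to \mathbb{Z}/q\mathbb{Z}$ and write the pushforward as $\bar D^* = \sum_{i\in\mathbb{Z}/q\mathbb{Z}} c_i[i]$, where $c_i$ counts how many elements of $D^*$ reduce to $i$. The invariance $pD^* = D^*$ forces $c_{pi} = c_i$. Projecting the group-ring equation (the $G_m$ term becomes $(m/q)\sum_i[i]$) and applying any nontrivial character $\psi$ of $\mathbb{Z}/q\mathbb{Z}$ yields $|\psi(\bar D^*)|^2 = n$. Writing $\zeta := \psi(1)$, the condition $c_{pi} = c_i$ means that the algebraic integer $\psi(\bar D^*) = \sum_i c_i\zeta^i$ is fixed by the Galois automorphism $\sigma_p\colon \zeta\mapsto \zeta^p$, so it lies in the subfield $\mathbb{Q}(\zeta_q)^{\lr{\sigma_p}}$.

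Suppose for contradiction that $p$ is a quadratic non-residue modulo $q$. Then the order $f$ of $p$ in $(\mathbb{Z}/q\mathbb{Z})^*$ is even and $p^{f/2}\equiv -1\pmod q$, so $\sigma_{-1}\in\lr{\sigma_p}$; hence $\psi(\bar D^*)$ lies in the maximal real subfield of $\mathbb{Q}(\zeta_q)$. Being real, $|\psi(\bar D^*)|^2 = n$ becomes $\psi(\bar D^*)^2 = n$, which forces $\sqrt n$ to be an algebraic integer, making $n$ a perfect square and contradicting the hypothesis. The main technical point is the existence of the simultaneous fixed translate $D^*$ together with the verification of the multiplier theorem's hypotheses; once those are in place, the rest is clean Galois theory.
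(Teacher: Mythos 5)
The paper itself does not prove Theorem~\ref{thm2.1} --- it is quoted from Mann~\cite{Mann1952} and used as a black box --- so there is no internal proof to compare with. Your proposal is essentially the classical multiplier-theoretic argument, and most of it is sound: the group-ring identity and $|\psi(\bar D^*)|^2=n$, Hall's multiplier theorem (applicable since $\lambda=1$ and $\gcd(n,n^2+n+1)=1$), the existence of the translate $D^*=D-s/(n+1)$ fixed by all numerical multipliers, the relation $c_{pi}=c_i$ after projecting to $\Z/q\Z$, the $\sigma_p$-invariance of $\psi(\bar D^*)$, and the observation that if $p$ is a nonresidue then $p^{f/2}\equiv-1\Mod{q}$ (Euler's criterion plus cyclicity), so $\sigma_{-1}\in\lr{\sigma_p}$ and $\psi(\bar D^*)$ is real.

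The final inference, however, is a genuine gap. From $\psi(\bar D^*)^2=n$ you conclude that ``$\sqrt n$ is an algebraic integer, making $n$ a perfect square.'' This is a non sequitur: $\sqrt n$ is an algebraic integer for \emph{every} positive integer $n$ (e.g.\ $\sqrt 2$), and that property says nothing about rationality. What your argument actually yields is $\sqrt n\in\mathbb{Q}(\zeta_q)$ (indeed in its maximal real subfield, which still contains irrational square roots such as $\sqrt q$ when $q\equiv 1\Mod 4$). To finish, use that $\mathrm{Gal}(\mathbb{Q}(\zeta_q)/\mathbb{Q})$ is cyclic, so $\mathbb{Q}(\zeta_q)$ has a unique quadratic subfield, namely $\mathbb{Q}(\sqrt{q^*})$ with $q^*=(-1)^{\f{q-1}{2}}q$. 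If $n$ were not a perfect square, then $\mathbb{Q}(\sqrt n)=\mathbb{Q}(\sqrt{n_0})$, with $n_0>1$ the squarefree part of $n$, would be a real quadratic subfield of $\mathbb{Q}(\zeta_q)$, forcing $q\equiv 1\Mod 4$ and $n_0=q$; but $q\nmid n$ since $n^2+n+1\equiv 1\Mod n$, a contradiction. Inserting this step completes the proof, and it is precisely here that the hypothesis that $n$ is not a perfect square, together with the coprimality of $n$ and $n^2+n+1$, actually gets used; as written, your argument never invokes that hypothesis in a meaningful way.
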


Note that the condition imposed by Mann's theorem is empty when $n^2+n+1$ is a prime congruent to $1$ modulo $4$, which we expect to happen for $\gg\f{N}{\log{N}}$ of the $n\leq N$ by the Bateman--Horn conjecture. Indeed, since $n^2+n+1\equiv 1\Mod{n}$, quadratic reciprocity tells us that every odd prime dividing $n$ is a square modulo $n^2+n+1$ whenever $n^2+n+1$ is a prime congruent to $1$ modulo $4$. The contribution of such $n$ must be dealt with if we want to prove Theorem~\ref{thm1.2}, and not just a weaker big-$O$ result. To do so, we will use the following lemma.

\begin{lemma}\label{lem2.2}
Let $n$ be the order of a perfect difference set, and assume that $q:=n^2+n+1$ is prime and $q\equiv 1\Mod{4}$. If $p$ is a prime such that $p\mid n$, then $p$ is a quartic residue modulo $q$.
\end{lemma}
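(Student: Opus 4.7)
The plan is to apply the multiplier theorem for planar difference sets to pass to a translate of $D$ fixed setwise by $p$, and then extract the conclusion from a short cyclotomic Galois argument. By Hall's multiplier theorem, any prime $p\mid n$ (automatically coprime to $q$ because $\gcd(n,n^2+n+1)=1$) is a multiplier of $D$; since also $\gcd(|D|,q)=\gcd(n+1,n^2+n+1)=1$, the standard lemma on fixed translates lets us replace $D$ by a translate satisfying $p\cdot D=D$. Fix a primitive $q$-th root of unity $\zeta$ and set $D(\zeta):=\sum_{d\in D}\zeta^d\in\Z[\zeta]$. The defining equation of a perfect difference set gives $D(\zeta)\overline{D(\zeta)}=n$, and $p\cdot D=D$ gives $\sigma_p(D(\zeta))=D(\zeta^p)=D(\zeta)$, where $\sigma_t$ denotes the automorphism of $\QQ(\zeta)$ sending $\zeta\mapsto\zeta^t$. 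In particular, $D(\zeta)$ lies in the fixed field of $\sigma_p$.

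The key step is to show that $f:=\mathrm{ord}_q(p)$ is odd. Suppose not. Then $p^{f/2}$ is the unique element of order $2$ in $\F_q^*$, so $p^{f/2}=-1$; this places $\sigma_{-1}\in\langle\sigma_p\rangle$, and since $\sigma_{-1}$ is complex conjugation we get $\overline{D(\zeta)}=D(\zeta)$, i.e.\ $D(\zeta)\in\R$. Combined with $D(\zeta)^2=n$, this gives $D(\zeta)=\pm\sqrt n$ and hence $\sqrt n\in\QQ(\zeta_q)$. Because $q\equiv 1\Mod{4}$, the unique quadratic subfield of $\QQ(\zeta_q)$ is $\QQ(\sqrt{q^*})=\QQ(\sqrt q)$, so $\QQ(\sqrt n)\subseteq\QQ(\sqrt q)$. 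This forces either $n$ to be a square or $nq$ to be a square. In the first case, writing $n=m^2$ we have $n^2+n+1=(m^2+m+1)(m^2-m+1)$, which is composite for $m\geq 2$, while for $m=1$ we get $q=3\not\equiv 1\Mod{4}$. In the second case, $q$ prime and $nq$ square forces $q\mid n$, contradicting $n<q$. Either way we contradict the hypotheses.

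With $f$ odd and $f\mid q-1$, and because $4\mid q-1$ (from $q\equiv 1\Mod{4}$) while $\gcd(f,4)=1$, we conclude $4f\mid q-1$, and hence $p^{(q-1)/4}\equiv 1\Mod{q}$. That is, $p$ is a quartic residue modulo $q$.

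The main obstacle is the algebraic setup---invoking the multiplier theorem, selecting a translate fixed by $p$, and placing $D(\zeta)$ in the fixed field of $\sigma_p$. Once this is in hand, the parity-of-$f$ argument finishes the proof via a one-line divisibility check; the key observation that makes everything work is that $q\equiv 1\Mod{4}$ rules out the coincidence $\QQ(\sqrt n)=\QQ(\sqrt q)$ and hence promotes the quadratic-residue output of Mann's argument to a quartic-residue one.
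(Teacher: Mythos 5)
Your proof is correct, and its overall skeleton matches the paper's: invoke Hall's multiplier theorem to make the prime $p\mid n$ a numerical multiplier, pass to a translate of $D$ fixed by the multiplier, show that the multiplicative order of $p$ modulo $q$ is odd, and then use $4\mid q-1$ together with oddness of the order to conclude $p^{(q-1)/4}\equiv 1\Mod{q}$. Where you genuinely diverge is in how oddness of the order is established. The paper does this combinatorially: it shows that $-1$ can never be a numerical multiplier of any perfect difference set (if $D=-D$, then for distinct $d,d'\in D$ with $d\neq -d'$ the identity $d-d'=(-d')-(-d)$ gives two representations of a nonzero difference), so since the multiplier set is closed under multiplication and $D$ is fixed, $p^i\not\equiv -1\Mod{q}$ for all $i$. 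You instead run a cyclotomic Galois argument: from $p\cdot D=D$ you get that $D(\zeta)$ lies in the fixed field of $\sigma_p$, so if the order were even then $\sigma_{-1}$ (complex conjugation) would fix $D(\zeta)$, and the norm relation $D(\zeta)\overline{D(\zeta)}=n$ would force $\sqrt{n}\in\QQ(\zeta_q)$; you then correctly dispose of the case $n$ a perfect square (which contradicts $q$ prime, or $q\equiv 1\Mod 4$ when $n=1$) and otherwise contradict the fact that the unique quadratic subfield of $\QQ(\zeta_q)$ is $\QQ(\sqrt{q^*})=\QQ(\sqrt{q})$. Your route is a nice illustration of the cyclotomic machinery behind Mann-type results, but it is heavier and uses the hypotheses ($q$ prime, $q\equiv 1\Mod 4$) already at this intermediate step, whereas the paper's ``$-1$ is never a multiplier'' observation is two lines, needs no hypothesis on $q$, and applies to every perfect difference set. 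Both arguments then finish with the same divisibility check, so the difference is one of economy rather than substance.
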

To prove Lemma~\ref{lem2.2}, we will need some basic facts from the theory of \textit{multipliers} of perfect difference sets.

\begin{definition}
Let $D$ be a perfect difference set of order $n$. We say that $t\in(\Z/(n^2+n+1)\Z)^\times$ is a \textit{numerical multiplier} for $D$ if $t\cdot D=a+D$ for some $a\in\Z/(n^2+n+1)\Z$.
\end{definition}
Note that the set of numerical multipliers of a perfect difference set is closed under multiplication. Mann showed that every perfect difference set has a translate that is fixed by all of its numerical multipliers (this result is attributed to Mann by Hall in~\cite{Hall1947}), and Hall~\cite{Hall1947} showed that if $D$ is a perfect difference set of order $n$, then every prime dividing $n$ is a numerical multiplier of $D$.
\begin{proof}[Proof of Lemma~\ref{lem2.2}]
We may assume, without loss of generality, that $D$ is fixed under multiplication by any of its numerical multipliers. Note that $-1$ cannot be a numerical multiplier of $D$. Indeed, we must have $|D|\geq 3$, so that there exist distinct $d,d'\in D$ such that $d\neq -d'$. Observe, however, that $d-d'=(-d')-(-d)$, so that no such perfect difference set $D$ can satisfy $D=-D$.

By Hall's result, we have that  $p^i\not\equiv -1\Mod{q}$ for any $i\geq 0$, so that $p$ must have odd multiplicative order modulo $q$. Since $4\mid q-1$, this implies that  $p$ must be a quartic residue modulo $q$.
\end{proof}

\section{Outline of the proof of Theorem~\ref{thm1.2}}

Given Theorem~\ref{thm2.1} and Lemma~\ref{lem2.2}, it should not be surprising that $\# S(N)=o(\f{N}{\log{N}})$. Indeed, for a typical integer $n$, one of $n$ or $n^2+n+1$ will have enough prime factors that the conditions imposed by these results should be very rarely satisfied. The difficulty with turning this heuristic into a proof is that $n^2+n+1$ (and thus its prime factors) obviously depends on $n$. The conditions in Theorem~\ref{thm2.1} and Lemma~\ref{lem2.2} are sufficiently powerful, however, that we can afford to use the union bound in several places, which allows us to remove the dependence of the prime factors of $n^2+n+1$ on a few of the small prime factors of $n$. To do this effectively, we must use different techniques depending on the prime factorization of $n^2+n+1$.

The contribution to $\# S(N)$ coming from $n$ such that $n^2+n+1$ has at least three prime factors is the most straightforward to handle. Typically, such $n$ are divisible by two distinct primes $p_1$ and $p_2$ satisfying $3\neq p_1,p_2\leq N^{\f{1}{1000}}$, say (the number of such $n\in S(N)$ not satisfying this condition can be shown to be $\ll\f{N}{(\log{N})^{3/2}}(\log_2{N})^{O(1)}$ using an argument similar to the one about to be sketched). Thus, since $n^2+n+1$ must have a prime divisor below $N^{\f{2}{3}}$ in this situation, by using the union bound and Theorem~\ref{thm2.1} it suffices to bound
\begin{equation}\label{eq2.1}
\sum_{\substack{ q\leq N^{\f{2}{3}} \\ p_1<p_2\leq N^{\f{1}{1000}} \\ 3\neq p_1,p_2}}\# S_{p_1,p_2}^q(N),
\end{equation}
where $S_{p_1,p_2}^q(N)$ equals
  \begin{align*}
\bigg\{n\leq N:p_1p_2\mid n,\ q\mid n^2+n+1,\text{ and } p'\mid n,\ q'\mid n^2+n+1\implies \left(\f{p'}{q}\right)=\left(\f{p_1}{q'}\right)=\left(\f{p_2}{q'}\right)=1\bigg\}.
  \end{align*}
  Bounding the size of each $S_{p_1,p_2}^q(N)$ is a sieve problem of dimension $\f{5}{4}$, the key being that $\f{5}{4}>1$. Thus, by an application of an upper bound sieve, we have that~\eqref{eq2.1} is bounded above by $\f{N}{(\log{N})^{5/4}}$ times a quantity of the form
\[
O\left(\sum_{\substack{ q\leq N^{\f{2}{3}} \\ p_1<p_2\leq N^{\f{1}{1000}} \\ 3\neq p_1,p_2}}\f{\left(\text{an Euler product that is typically small}\right)}{p_1p_2q}\right),
\]
which we can bound by a power of $\log_2{N}$.

To estimate the number of $n\in S(N)$ such that $n^2+n+1=q_1q_2$ for two primes $q_1<q_2$, we must split into subcases depending on the size of $q_1$. When $q_1\leq\f{N}{(\log{N})^{\beta}}$ for $\beta>0$ sufficiently large, an argument similar to the one above can be used. When $\f{N}{(\log{N})^{\beta}}\leq q_1\leq\f{N}{(\log{N})^{1/2}}$, a more delicate argument is required. To deal with this subcase, we split such $n$ up based on the smallest $k=k(N)\to\infty$ prime factors of $n$ below $\log_3{N}$ (the number of such $n$ without $k$ prime factors below $\log_3{N}$ is negligible), so that, by Theorem~\ref{thm2.1}, only asymptotically $2^{-k}$ times the number of primes $q$ in the interval $[\f{N}{(\log{N})^{\beta}},\f{N}{(\log{N})^{1/2}}]$ can possibly divide $n^2+n+1$. We then take the union bound over these $q$, apply an upper bound sieve, and sum over $q$ and the $k$-tuples of distinct primes below $\log_3{N}$. Finally, to deal with the subcase $\f{N}{(\log{N})^{1/2}}\leq q_1\leq N$, we forget the condition $n\in S(N)$ and show, using an enveloping sieve argument, that there are $\ll\f{N}{(\log{N})^{3/2}}$ many $n\leq N$ such that $n^2+n+1=q_1q_2$ with $q_1<q_2$ and $\f{N}{(\log{N})^{1/2}}\leq q_1\leq N$. One of the key inputs is an asymptotic count, with power saving error term, for the number of integer triples $(x,y,z)$ on the hyperboloid $y^2-4xz=-3$ satisfying $1\leq x,z\leq X$, $k\mid x$, and $\ell\mid z$, for a variety of $k$ and $\ell$. We prove an estimate for the number of these lattice points by adapting an argument of Hooley~\cite{Hooley1963}.

When $n^2+n+1$ is a prime, different arguments are required depending on whether $n^2+n+1$ is congruent to $1$ or $3$ modulo $4$. The number of $n\in S(N)$ such that $n^2+n+1$ is a prime congruent to $3$ modulo $4$ can be bounded easily using an upper bound sieve--it follows from Theorem~\ref{thm2.1} and quadratic reciprocity that if $n$ is not a perfect square, then every odd prime $p\mid n$ must satisfy $p\equiv 1\Mod{4}$. The situation when $n^2+n+1$ is congruent to $1$ modulo $4$ is much more involved. As in the second subcase of the paragraph above, we begin by splitting such $n$ up based on the smallest $k$ prime factors $p_1,\dots,p_k$ of $n$, but this time apply Lemma~\ref{lem2.2} to get that $p_1,\dots,p_k$ must all be quartic residues modulo $n^2+n+1$. By one of the formulations of the quartic reciprocity law, this forces $n^2+n+1$ to be representable by the quadratic form $x^2+4y^2$ with $y$ satisfying certain congruence conditions that depend on $p_1,\dots,p_k$. We bound the number of such $n$ by combining the Selberg sieve with an asymptotic count, with power-saving error term, for the number of integer triples $(x,y,z)$ on the hyperboloid $4x^2+16y^2-z^2=3$ satisfying $1\leq z\leq X$ and various congruence restrictions on $y$ and $z$. The proof of this lattice point counting result is also an adaptation of the previously mentioned argument of Hooley, though the argument ends up being significantly more complicated than the one for the other lattice point count.

\section{$n^2+n+1$ has at least three prime factors}\label{sec3}

In this section, we bound the number of $n\in S(N)$ such that $n^2+n+1$ has at least three prime factors:
\begin{proposition}\label{prop3.1}
  We have
  \[
\#\{n\in S(N):\Omega(n^2+n+1)\geq 3\}\ll\f{N}{(\log{N})^{\f{5}{4}}}(\log_2{N})^{3}.
  \]
\end{proposition}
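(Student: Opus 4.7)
The plan is to follow the strategy outlined in Section~\ref{sec2}. I would split the $n$ being counted into a ``generic'' subset consisting of those $n$ possessing two distinct prime divisors $p_1<p_2$ with $3\neq p_1,p_2\leq N^{1/1000}$, and the ``exceptional'' complementary subset. Members of the exceptional subset have essentially all of their prime factors above $N^{1/1000}$, so a one-dimensional sieve counts them with an $O(N/\log N)$ bound; combining this with the restriction from Theorem~\ref{thm2.1} and quadratic reciprocity (which forces every small prime factor of $n$ into a thin set of residues modulo the large prime factors of $n^2+n+1$) saves an additional factor of $(\log N)^{1/2}$, yielding $\ll N(\log N)^{-3/2}(\log_2 N)^{O(1)}$ for this subset, which is negligible compared to the target bound.

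For the generic subset, let $q$ denote the least prime factor of $n^2+n+1$. Since $\Omega(n^2+n+1)\geq 3$ and $n^2+n+1\leq 2N^2$, we have $q\leq N^{2/3}$ once $N$ is sufficiently large. After setting aside the $O(\sqrt{N})$ perfect squares $n\leq N$, Theorem~\ref{thm2.1} forces every prime $p'\mid n$ to satisfy $(p'/q)=1$ and every prime $q'\mid n^2+n+1$ to satisfy $(p_1/q')=(p_2/q')=1$. In other words, such $n$ lies in $S_{p_1,p_2}^{q}(N)$ for some admissible triple $(p_1,p_2,q)$, and it suffices to bound the quantity~\eqref{eq2.1}.

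To estimate each $\#S_{p_1,p_2}^{q}(N)$, I would apply an upper-bound sieve of Selberg or Rosser--Iwaniec type. After imposing the constraints $p_1p_2\mid n$ and $q\mid n^2+n+1$ (reducing the pool to $\sim 2N/(p_1p_2 q)$), the remaining sieving conditions are: one residue class modulo each prime $p'\neq p_1,p_2,q$ with $(p'/q)=-1$ (half of all primes, contributing $1/2$ to the sieve dimension); and two residue classes modulo each prime $q'\neq q$ with $q'\equiv 1\Mod 3$ and $(p_1/q')=-1$ or $(p_2/q')=-1$ (a density $\tfrac{1}{2}\cdot\tfrac{3}{4}=\tfrac{3}{8}$ subset of all primes, each contributing weight $2$, hence $3/4$ to the sieve dimension). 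Crucially, the total sieve dimension is $1/2+3/4=5/4>1$, and the upper-bound sieve yields
\[
\#S_{p_1,p_2}^{q}(N)\ll \frac{N}{p_1p_2 q(\log N)^{5/4}}\cdot E(p_1,p_2,q),
\]
where $E(p_1,p_2,q)$ is an Euler product depending on the residues of $q$ modulo $4p_1p_2$ and on $p_1,p_2$ modulo small primes, and is typically of size $O(1)$.

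Summing over admissible triples via Mertens' theorem (giving $\sum_{q\leq N^{2/3}}1/q\ll \log_2 N$ and $\sum_{3\neq p_1<p_2\leq N^{1/1000}}1/(p_1p_2)\ll (\log_2 N)^2$) produces the claimed bound $\ll N(\log_2 N)^3/(\log N)^{5/4}$. The main obstacle I anticipate is controlling the averaged contribution of $E(p_1,p_2,q)$: while $E$ can be as large as a power of $\log_2 N$ for atypical triples, expanding $E$ as a sum over squarefree integers and interchanging the order of summation with the triple sum should absorb this loss. The exceptional case will also require some careful bookkeeping to cover the sub-subcase where $n$ has exactly one small prime factor other than $3$, but the extra quadratic-residue constraint from Mann's theorem consistently provides the $(\log N)^{1/2}$ saving needed there.
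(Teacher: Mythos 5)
Your decomposition and sieve set-up coincide with the paper's: split according to whether $n$ has two distinct small prime factors $p_1<p_2$ with $p_1,p_2\neq 3$, discard the $O(\sqrt N)$ perfect squares, use Theorem~\ref{thm2.1} together with a union bound over a prime $q\leq N^{2/3}$ dividing $n^2+n+1$, apply an upper bound sieve of dimension $\f{5}{4}$ (and of dimension $\f{3}{2}$ in the exceptional case), and then sum over the triples with Mertens' theorem; your density count $\f{1}{2}+\f{3}{4}=\f{5}{4}$ agrees with the paper's sieve function $g_3$.

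The gap is precisely the step you wave at as ``expanding $E(p_1,p_2,q)$ as a sum over squarefree integers and interchanging the order of summation.'' Two issues. First, the correction factor the sieve produces, relative to the main term $(\log N)^{-5/4}$, is a product of \emph{fractional} powers of truncated $L$-functions, $L(1,\chi_{q^*};N^\gamma)^{1/2}\prod_j L(1,\chi_{m_j};N^\gamma)^{1/4}$ with moduli built from $q$, $p_1$, $p_2$ and $p_1p_2$; such a product cannot literally be expanded as a Dirichlet sum over squarefree integers, and one must first apply Cauchy--Schwarz/H\"older to reduce to first moments (this reduction is what produces the exponent $3$ on $\log_2 N$ in the statement). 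Second, and more seriously, even after that reduction the interchange of summation leaves you with quadratic characters summed over \emph{primes} — the conductors involve the summation variables $p$, $p_1p_2$, $q^*$ themselves — and the bound $\sum_{p\leq X}L(1,\chi_{ap};y)/p\ll\log_2 X$, uniformly in the truncation $y$, is not a rearrangement but a genuine analytic input: in the paper it is Lemma~\ref{lem3.3}, proved via Barban's argument together with Jutila's mean value estimate for character sums over primes. Without supplying such a moment estimate (or an equivalent input controlling the tails of the Euler products on average over prime conductors), the assertion that the loss from atypically large $E$ ``is absorbed'' is unsupported. The same point applies to your exceptional case: there the Euler product depends on the single small prime factor $p$ of $n$, and its average over $p$ needs the same lemma; the rest of your outline (the $q\leq N^{2/3}$ reduction, the Mertens sums giving $\log_2 N$ and $(\log_2 N)^2$) matches the paper and is fine.
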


We split the estimation of the number of $n\in S(N)$ such that $n^2+n+1$ has at least three prime factors into the estimation of the size of the following three sets:
\[
\{n\in S(N):3\nmid n,\ p_2(n)>N^{\alpha},\text{ and }\Omega(n^2+n+1)\geq 3\},
\]
\[
\{n\in S(N):3\mid n,\ p_3(n)>N^{\alpha},\text{ and }\Omega(n^2+n+1)\geq 3\},
\]
and
\[
\{n\in S(N):p_1p_2\mid n\text{ for some }p_1<p_2\leq N^{\alpha}\text{ with }p_1,p_2\neq 3\text{ and }\Omega(n^2+n+1)\geq 3\},
\]
for some $0<\alpha<\f{1}{6}$ to be fixed shortly. Note that if $n\in\N$ is not divisible by two distinct primes $p_1,p_2\leq N^\alpha$ with $p_1,p_2\neq 3$, then either $3\nmid n$ and the second smallest prime factor of $n$ has size at least $N^{\alpha}$, or $3\mid n$ and $n$ either has at most two prime factors or (since $3$ must then be either the smallest or second smallest prime factor of $n$) the third smallest prime factor of $n$ has size at least $N^{\alpha}$. Thus, to prove Proposition~\ref{prop3.1}, it really does suffice to bound the sizes of the above three sets. We begin by applying the union bound, Theorem~\ref{thm2.1}, and an upper bound sieve to deduce initial bounds for each.

\begin{lemma}\label{lem3.2}
  There exist absolute constants $0<\alpha<\f{1}{6}$ and $0<\gamma<1$ such that
  \[
\#\{n\in S(N):3\nmid n,\ p_2(n)>N^{\alpha},\text{ and }\Omega(n^2+n+1)\geq 3\}
  \]
  and
  \[
\#\{n\in S(N):3\mid n,\ p_3(n)>N^{\alpha},\text{ and }\Omega(n^2+n+1)\geq 3\}
  \]
  are both
  \[
\ll\f{N}{(\log{N})^{\f{3}{2}}}\sum_{3\neq p\leq N^{\f{1}{2}}}\f{L(1,\chi_{4^{\epsilon_p}p};N^{\gamma})^{\f{1}{2}}L(1,\chi_{-3\cdot 4^{\epsilon_{-3p}}p};N^{\gamma})^{\f{1}{2}}}{p},
  \]
  where $\epsilon_n=0$ if $n\equiv 1\Mod{4}$ and $\epsilon_n=1$ if $n\equiv 2,3\Mod{4}$, and
  \[
\#\{n\in S(N):p_1p_2\mid n\text{ for some }p_1<p_2\leq N^{\alpha}\text{ with }p_1,p_2\neq 3\text{ and }\Omega(n^2+n+1)\geq 3\}
  \]
is  
  \[
\ll\f{N}{(\log{N})^{\f{5}{4}}}\sum_{\substack{q\leq N^{\f{2}{3}}\\ p_1<p_2\leq N^{\alpha} \\ 3\neq p_1,p_2}}\f{\prod_{j=1}^{7}L(1,\chi_{m_j};N^{\gamma})^{k_j}}{p_1p_2q},
  \]
  where $m_1=q^*$, $m_2=4^{\epsilon_{p_1}}p_1$, $m_3=4^{\epsilon_{p_2}}p_2$, $m_4=4^{\epsilon_{p_1p_2}}p_1p_2$, $m_5=-3\cdot 4^{\epsilon_{-3p_1}}p_1$, $m_6=-3\cdot 4^{\epsilon_{-3p_2}}p_2$, $m_7=-3\cdot 4^{\epsilon_{-3p_1p_2}}p_1p_2$, $k_1=\f{1}{2}$, and $k_2=\dots=k_7=\f{1}{4}$.
\end{lemma}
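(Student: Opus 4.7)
My plan is to combine Theorem~\ref{thm2.1} with Selberg's upper bound sieve. Theorem~\ref{thm2.1} imposes quadratic residue constraints on the prime divisors of $n$ and of $n^2+n+1$; the union bound fixes a bounded number of these primes, and the remainder is sieved. Since the $O(\sqrt N)$ perfect squares in $[1,N]$ can be discarded, I assume throughout that $n$ is not a perfect square, so Theorem~\ref{thm2.1} applies.

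For the first and second sets I fix a single prime $p\neq 3$ dividing $n$, taken to be $p_1(n)$ in the first case and the smallest prime factor of $n$ different from $3$ in the second; since $n$ is not a prime power, $p\leq\sqrt N$ apart from a negligibly small set of exceptional $n$. For the third set I fix $p_1<p_2\leq N^\alpha$ with $p_1,p_2\neq 3$ and $p_1p_2\mid n$, together with the smallest prime factor $q\leq N^{2/3}$ of $n^2+n+1$ (which exists because $\Omega(n^2+n+1)\geq 3$).

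Having fixed these primes, I run Selberg's upper bound sieve modulo primes $r\in (C,N^\gamma]$ not among the fixed primes, for a small absolute $\gamma<\alpha$. The residue classes modulo $r$ to be sieved out are (i) the class $\{0\}$, forbidden either because $r\leq N^\alpha$ in the first two sets (from $p_2(n)>N^\alpha$ or $p_3(n)>N^\alpha$) or because $\left(\f{r}{q}\right)=-1$ in the third set (Theorem~\ref{thm2.1} applied to $r\mid n$); and (ii) the two roots of $n^2+n+1\equiv 0\Mod r$, forbidden (when $r\equiv 1\Mod 3$) whenever $\left(\f{p_i}{r}\right)=-1$ for some fixed $p_i$ (Theorem~\ref{thm2.1} applied to $r\mid n^2+n+1$). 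Using
\[
1_{\left(\f{p}{r}\right)=-1}=\f{1-\chi_{4^{\epsilon_p}p}(r)}{2},\quad 1_{r\equiv 1\Mod 3}=\f{1+\chi_{-3}(r)}{2},\quad 1_{\left(\f{r}{q}\right)=-1}=\f{1-\chi_{q^*}(r)}{2}
\]
(the first by quadratic reciprocity), and expanding the OR-condition in the third set by inclusion--exclusion, the number $\omega(r)$ of forbidden classes takes the form $\omega(r)=\kappa+\sum_jc_j\chi_j(r)$ with $\kappa=3/2$ for the first two sets and $\kappa=5/4$ for the third; the nontrivial $\chi_j$ are $\chi_{-3},\chi_{4^{\epsilon_p}p},\chi_{-3\cdot 4^{\epsilon_p}p}$ in the first two cases, and $\chi_{q^*},\chi_{-3}$ together with $\chi_{m_2},\dots,\chi_{m_7}$ in the third.

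The standard Selberg analysis, combined with the Euler product identity $\prod_r(1-\omega(r)/r)(1-1/r)^{-\kappa}\asymp\prod_jL(1,\chi_j;N^\gamma)^{-c_j}$, produces an upper bound of shape $X(\log N)^{-\kappa}\prod_jL(1,\chi_j;N^\gamma)^{-c_j}$, where $X$ equals $N/p$ or $N/(p_1p_2q)$ respectively. Reading off the exponents $-c_j$ from the character expansion gives exactly $1/2$ (for $\chi_{q^*}$) and $1/4$ (for $\chi_{m_2},\dots,\chi_{m_7}$) in the third case, and $1/2$ for each of $\chi_{4^{\epsilon_p}p}$ and $\chi_{-3\cdot 4^{\epsilon_p}p}$ in the first two; in all three sets the fixed character $\chi_{-3}$ appears with a fixed exponent and contributes only an absolute constant, absorbable into the implicit constant. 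Summing over the fixed primes then yields the three claimed bounds. The main obstacle is the character bookkeeping --- verifying that multiplying out the indicator functions produces Kronecker characters of precisely the moduli $4^{\epsilon_m}m$ and $-3\cdot 4^{\epsilon_m}m$ stated in the lemma (using that $\epsilon_{-3p}=\epsilon_p$ since $-3\equiv 1\Mod 4$) --- along with ensuring that the Selberg sieve error term is dominated by the main term for $\gamma$ chosen sufficiently small in terms of $\alpha$.
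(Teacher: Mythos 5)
Your proposal is essentially the paper's own argument: a union bound over the fixed primes ($p$ in the first two cases, $p_1,p_2,q$ in the third) justified by Theorem~\ref{thm2.1}, followed by an upper bound sieve whose density function (the paper's $g_1$, resp.\ $g_3$) has average $\f{3}{2}$, resp.\ $\f{5}{4}$, and whose character decomposition yields exactly the stated $L$-factors; your bookkeeping of the exponents, the identity $\epsilon_{-3p}=\epsilon_p$, the absorption of the fixed character $\chi_{-3}$, the $O(\sqrt{N})$ loss from perfect squares, and the choice $q\leq N^{\f{2}{3}}$ all match the paper.

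There is, however, one genuine gap: in the second set you discard as ``negligibly small'' those $n$ with $3\mid n$ whose smallest prime factor other than $3$ exceeds $\sqrt{N}$. These exceptions are the integers $n=3^aP$ with $P>\sqrt{N}$ prime, and there are $\asymp\f{N}{\log{N}}$ of them up to $N$, which is \emph{larger} than the bound you are proving (the right-hand side of the lemma is $O(N\log_2{N}/(\log{N})^{3/2})$ by Lemma~\ref{lem3.3}), so negligibility is not automatic; it has to be extracted from the condition $n\in S(N)$. This is exactly why the paper's proof applies an additional upper bound sieve in the second case and carries an extra $O(N/(\log{N})^{3/2})$ term: for $n=3^aP$ one sieves in the variable $P\leq N/3^a$, removing the class $0\Mod{r}$ (primality of $P$) together with the two roots of $n^2+n+1\equiv 0\Mod{r}$ for those $r$ with $\left(\f{-3}{r}\right)=1$ and $\left(\f{3}{r}\right)=-1$ (Theorem~\ref{thm2.1} applied with the prime $3$), a sieve of dimension $\f{3}{2}$ involving only fixed characters, so its $L$-factors are $O(1)$; summing over $a$ gives $O(N/(\log{N})^{3/2})$, which is absorbed since a single fixed term of the sum on the right-hand side of the lemma is $\gg 1$. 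With this step supplied, your proof goes through as written, provided you also verify the level-of-distribution constraint tying $\gamma$ to $\alpha$ (so that the sieve error is dominated when $q$ is as large as $N^{\f{2}{3}}$ and $p_1p_2\leq N^{2\alpha}$), which you correctly flag.
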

\begin{proof}
We will apply a standard upper bound sieve, a statement of which can be found in Section~6.5 of~\cite{FriedlanderIwaniec2010}, numerous times throughout this paper, including multiple times within this proof.

By Theorem~\ref{thm2.1} (and an application of an upper bound sieve in the second inequality), we have
  \[
\#\{n\in S(N):3\nmid n,\ p_2(n)>N^{\alpha},\text{ and }\Omega(n^2+n+1)\geq 3\}\leq\sum_{3\neq p\leq N^{\f{1}{2}}}\# S_{1,p}(N)+O(N^{\f{1}{2}}),
  \]
  \[
\#\{n\in S(N):3\mid n,\ p_3(n)>N^{\alpha},\text{ and }\Omega(n^2+n+1)\geq 3\}\leq\sum_{3\neq p\leq N^{\f{1}{2}}}\# S_{2,p}(N)+O\left(\f{N}{(\log{N})^{\f{3}{2}}}+N^{\f{1}{2}}\right),
\]
and that
\[
\#\{n\in S(N):p_1p_2\mid n\text{ for some }p_1<p_2\leq N^{\alpha}\text{ with }p_1,p_2\neq 3\text{ and }\Omega(n^2+n+1)\geq 3\}
\]
is at most
  \[
\sum_{\substack{ q\leq N^{\f{2}{3}} \\ p_1<p_2\leq N^{\alpha} \\ 3\neq p_1,p_2}}\# S_{3,p_1,p_2}^q(N)+O(N^{\f{1}{2}}),
  \]
  where
  \[
S_{1,p}(N):=\left\{n\leq N:p\mid n,\ p_2(n)>N^\alpha,\text{ and }q\mid n^2+n+1\implies \left(\f{p}{q}\right)=1\right\},
  \]
  \[
S_{2,p}(N):=\left\{n\leq N:3p\mid n,\ p_3(n)>N^\alpha,\text{ and }q\mid n^2+n+1\implies \left(\f{p}{q}\right)=1\right\},
\]
and $S_{3,p_1,p_2}^q(N)$ equals
    \begin{align*}
\bigg\{n\leq N:p_1p_2\mid n,\ q\mid n^2+n+1,\text{ and } p'\mid n,\ q'\mid n^2+n+1\implies \left(\f{p'}{q}\right)=\left(\f{p_1}{q'}\right)=\left(\f{p_2}{q'}\right)=1\bigg\}.
  \end{align*}
The error term $O(N/(\log{N})^{3/2})$ appearing in the second inequality comes from the contribution of $n$ of the form $n=3^jp$ with $p\geq \sqrt{N}$, which we estimate using an upper bound sieve. The restriction that $n$ is only divisible by large primes and $3$ sieves out one congruence class modulo each prime on average (namely, the zero congruence class), and the restriction that $3\mid n$ forces $\left(\f{3}{q}\right)=1$ for all primes $q$ dividing $n^2+n+1$, which sieves out half of a (nonzero) congruence class modulo each prime on average (namely, the roots of $x^2+x+1$ modulo each prime where these roots exist and for which $3$ is a quadratic residue). This leads to the savings of $(\log{N})^{3/2}$ over the trivial bound. Similar arguments will appear numerous times throughout the remainder of this paper.

  Fixing $\alpha$ sufficiently small, to each of $S_{1,p}(N)$, $S_{2,p}(N)$, and $S_{3,p_1,p_2}^{q}(N)$ we apply an upper bound sieve to get that there exists a fixed constant $0<\gamma\leq\alpha$ such that
\[
\#S_{1,p}(N),\#S_{2,p}(N)\ll\f{N}{p}\prod_{p'<N^{\gamma}}\left(1-\f{g_1(p')}{p'}\right)
\]
for all $3\neq p'\leq N^{\f{1}{2}}$, where
\[
  g_1(p'):=1+\begin{cases} 2 & \left(\f{-3}{p'}\right)=1\text{ and }\left(\frac{p}{p'}\right)=-1 \\ 1 & p'=3\text{ and }\left(\f{p}{3}\right)=-1 \\ 0 & \text{otherwise} \end{cases}
\]
and such that
\[
\#S_{3,p_1,p_2}^q(N)\ll\f{N}{p_1p_2q}\prod_{p'<N^{\gamma}}\left(1-\f{g_3(p')}{p'}\right)
\]
for all $q\leq N^{\f{2}{3}}$ and $p_1<p_2\leq N^{\alpha}$ with $3\neq p_1,p_2$, where
\[
g_3(p'):=\begin{cases}1 & \left(\f{p'}{q}\right)=-1 \\ 0 & \text{otherwise} \end{cases}+\begin{cases} 2 & \left(\f{-3}{p'}\right)=1\text{ and }\left(\frac{p_1}{p'}\right)\text{ or }\left(\frac{p_2}{p'}\right)=-1 \\ 1 & p'=3\text{ and }\left(\f{p_1}{3}\right)\text{ or }\left(\f{p_2}{3}\right)=-1 \\ 0 & \text{otherwise} \end{cases}.
\]

Standard Euler product manipulations then yield
\[
\prod_{p'<N^{\gamma}}\left(1-\f{g_1(p')}{p'}\right)\ll\f{L(1,\chi_{4^{\epsilon_p}p};N^{\gamma})^{\f{1}{2}}L(1,\chi_{-3\cdot 4^{\epsilon_p}p};N^{\gamma})^{\f{1}{2}}}{(\log{N})^{\f{3}{2}}}
\]
when $3\neq p\leq N^{\f{1}{2}}$ and
\[
\prod_{p'<N^{\gamma}}\left(1-\f{g_3(p')}{p'}\right)\ll\f{\prod_{j=1}^{7}L(1,\chi_{m_j};N^{\gamma})^{k_j}}{(\log{N})^{\f{5}{4}}}
\]
when $p_1<p_2\leq N^\alpha$ with $3\neq p_1,p_2$ and $q\leq N^{\f{2}{3}}$,
\end{proof}
To finish the proof of Proposition~\ref{prop3.1}, we require a standard lemma (which will also be used once in Section~\ref{sec4}).
\begin{lemma}\label{lem3.3}
  Let $y>0$ and $a\in\Z$ be nonzero. We have
  \[
    \sum_{\substack{p\leq X \\ p\nmid a}}L(1,\chi_{ap};y),\sum_{\substack{p\leq X \\ p\nmid a}}L(1,\chi_{ap})\ll_a\f{X}{\log{X}}
  \]
  and
  \[
    \sum_{\substack{p_1<p_2\leq X \\ p_1,p_2\nmid a}}L(1,\chi_{ap_1p_2};y)\ll_a\f{X^2}{(\log{X})^2}.
  \]
\end{lemma}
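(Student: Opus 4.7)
The plan is to prove all three bounds by the same expand--swap--split strategy: expand each $L$-value as a sum of character values over smooth integers, interchange the order of summation with the outer prime sum(s), and split the resulting inner sum into contributions from perfect squares and non-squares.

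For the first bound, write $L(1,\chi_{ap};y) = \sum_n \chi_{ap}(n)/n$ with $n$ ranging over positive integers whose prime factors all lie below $y$. Interchanging yields
\[
\sum_{\substack{p\le X\\ p\nmid a}} L(1,\chi_{ap};y) = \sum_n \frac{1}{n} \sum_{\substack{p \le X\\ p \nmid an}} \chi_{ap}(n).
\]
When $n = m^2$ is a perfect square, $\chi_{ap}(m^2)$ equals $1$ for $(ap,m)=1$ and $0$ otherwise, so the inner prime sum is $\pi(X) + O_a(\omega(m))$. Summing these square contributions yields the main term $\ll \pi(X) \sum_m m^{-2} \ll X/\log X$. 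For non-square $n$ with squarefree part $n_0 > 1$, quadratic reciprocity shows that $\chi_{ap}(n)$, viewed as a function of $p$, is (after splitting $p$ into residue classes modulo $8$ to absorb the reciprocity sign) a non-principal real Dirichlet character in $p$ of conductor dividing $4n_0$. The Siegel--Walfisz theorem then bounds this inner prime sum by $O_B(X(\log X)^{-B})$ for $n_0 \le (\log X)^B$, and a P\'olya--Vinogradov-type truncation of the Dirichlet series at a suitable length controls the contribution of $n$ with larger squarefree part.

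The second bound follows by the same argument in the limit $y \to \infty$: the Dirichlet series $\sum_n \chi_{ap}(n)/n$ converges by partial summation since $\chi_{ap}$ is non-principal for all but $O_a(1)$ values of $p$. The third bound proceeds identically after expanding $\chi_{ap_1p_2}(n) = \chi_a(n)\chi_{p_1}(n)\chi_{p_2}(n)$: the diagonal from $n = m^2$ now contributes the main term $\ll \pi(X)^2 \ll X^2/(\log X)^2$, while for non-square $n$ the double prime sum factorizes (after reciprocity) as a product of two character sums in $p_1$ and $p_2$, each bounded by Siegel--Walfisz.

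The main technical obstacle is controlling the inner prime sum for non-square $n$ whose squarefree part lies beyond the Siegel--Walfisz uniform range of conductors. I expect to handle this by truncating the smooth-number Dirichlet series at a carefully chosen length via P\'olya--Vinogradov, and then applying Rankin's trick to the tail of the smooth-number Euler product, with the exponent balanced against the smoothness parameter $y$ so that the final bound remains below $X/\log X$.
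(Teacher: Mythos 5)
There is a genuine gap, and it sits exactly where the real difficulty of the lemma lies: the non-square $n$ whose squarefree kernel exceeds the Siegel--Walfisz range. After you expand $L(1,\chi_{ap};y)$ over $y$-smooth $n$ and swap, the total mass $\sum_{n\ y\text{-smooth}}1/n\asymp\log y$ is of size $\log X$ in the intended application (there $y$ is a power of $N$), so you cannot afford to bound the inner prime sum trivially except on a set of $n$ of mass $o(1/\log X)$ --- and the $n$ with kernel $n_0>(\log X)^B$ carry essentially all of the mass. Your two proposed tools do not cover this range. Rankin's trick does control the smooth tail $n>T$, but only once $T$ is at least about $y^{\log\log y}$, i.e.\ super-polynomial in $X$; so after truncation you are still left with non-square $n\le T$ whose conductors range up to a large power of $X$. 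For an individual such conductor there is no unconditional estimate for $\sum_{p\le X}\chi(p)$ with any saving (that is essentially the Siegel-zero problem), so ``Siegel--Walfisz for $n_0\le(\log X)^B$ plus P\'olya--Vinogradov truncation'' leaves the entire middle range $(\log X)^B<n_0\le T$ untreated. The same gap occurs in your treatment of $L(1,\chi_{ap})$ (where in addition the interchange needs a quantitative truncation of the conditionally convergent series, e.g.\ via P\'olya--Vinogradov in the $n$-variable) and in the bilinear case, where moreover the reciprocity signs couple $p_1$, $p_2$ and $n$, so the double prime sum does not cleanly factor.

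What rescues the argument is not an individual bound but an estimate \emph{on average over the family}: one needs a mean-value bound for sums of real characters over primes as the modulus $n_0$ varies (equivalently, a quadratic large-sieve/double-oscillation input after applying reciprocity), so that the contribution of all non-square $n$ with medium and large kernel is small in aggregate. This is precisely the route the paper takes: it does not reprove the lemma from scratch but derives it by modifying the first-moment argument of Section~5 of \cite{Barban1966}, with the key new ingredient being Jutila's mean value estimate for sums of quadratic characters over primes \cite{Jutila1981}. Your expand--swap--split skeleton is the right classical frame (and matches the cited argument), but without an averaged estimate of Jutila type the proof as proposed does not close.
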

This lemma follows from a small modification of the argument given in Section~5 of~\cite{Barban1966} by using a mean value estimate for sums of quadratic characters over primes due to Jutila~\cite{Jutila1981}. (Such a modification, in fact, gives asymptotics for the sums in Lemma~\ref{lem3.3}, and also for higher moments.) 

Now we can prove Proposition~\ref{prop3.1}.

\begin{proof}[Proof of Proposition~\ref{prop3.1}]
 By Lemma~\ref{lem3.2}, it suffices to bound
\begin{equation}\label{eq3.1}
\sum_{3\neq p\leq N^{\f{1}{2}}}\f{L(1,\chi_{4^{\epsilon_p}p};N^{\gamma})^{\f{1}{2}}L(1,\chi_{-3\cdot 4^{\epsilon_{-3p}}p};N^{\gamma})^{\f{1}{2}}}{p}
\end{equation}
and
\begin{equation}\label{eq3.2}
\sum_{\substack{q\leq N^{\f{2}{3}}\\ p_1<p_2\leq N^{\alpha} \\ 3\neq p_1,p_2}}\f{\prod_{j=1}^{7}L(1,\chi_{m_j};N^{\gamma})^{k_j}}{p_1p_2q},
\end{equation}
using the notation of Lemma~\ref{lem3.2}. By the Cauchy--Schwarz inequality, we have that~\eqref{eq3.1} is bounded above by
\[
\left(\sum_{3\neq p\leq N^{\f{1}{2}}}\f{L(1,\chi_{4^{\epsilon_p}p};N^{\gamma})}{p}\right)^{\f{1}{2}}\left(\sum_{3\neq p\leq N^{\f{1}{2}}}\f{L(1,\chi_{-3\cdot 4^{\epsilon_{-3p}}p};N^{\gamma})^{\f{1}{2}}}{p}\right)^{\f{1}{2}},
\]
which is $\ll\log_2{N}$ by Lemma~\ref{lem3.3} and partial summation. Similarly, by H\"older's inequality, we have that~\eqref{eq3.2} is bounded above by
\begin{align*}
  &\left(\sum_{q\leq N^{\f{2}{3}}}\f{L\left(1,\chi_{q^*};N^\gamma\right)^{\f{1}{2}}}{q}\right)\left(\sum_{p\leq N^{\alpha}}\f{L(1,\chi_{4^{\epsilon_p}p};N^\gamma)}{p}\right)^{\f{1}{2}}\left(\sum_{3\neq p\leq N^{\alpha}}\f{L(1,\chi_{-3\cdot 4^{\epsilon_{-3p}} p};N^\gamma)}{p}\right)^{\f{1}{2}}\\
  &\cdot\left(\sum_{\substack{p_1<p_2\leq N^{\alpha} \\ 3\neq p_1,p_2}}\f{L(1,\chi_{4^{\epsilon_{p_1p_2}}p_1p_2};N^\gamma)}{p_1p_2}\right)^{\f{1}{4}}\left(\sum_{\substack{p_1<p_2\leq N^{\alpha} \\ 3\neq p_1,p_2}}\f{L(1,\chi_{-3\cdot 4^{\epsilon_{-3p_1p_2}}p_1p_2};N^\gamma)}{p_1p_2}\right)^{\f{1}{4}},
\end{align*}
which is $\ll(\log_2{N})^3$, also by Lemma~\ref{lem3.3} and partial summation.
\end{proof}

\section{$n^2+n+1$ is the product of two primes}\label{sec4}

In this section, we bound the number of $n\in S(N)$ such that $n^2+n+1$ has exactly two prime factors:
\begin{proposition}\label{prop4.1}
  We have
  \[
\#\{n\in S(N):n^2+n+1\text{ is the product of two primes}\}\ll \f{N}{\log{N}\exp(C\f{\log_5{N}}{\log_6{N}})}
  \]
  for some absolute constant $C>0$.
\end{proposition}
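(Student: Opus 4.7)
The plan is to write $n^2+n+1 = q_1 q_2$ with primes $q_1 < q_2$ (so $q_1 \leq n+1 \leq N+1$), and to split the count of $n \in S(N)$ according to the size of $q_1$. Let $\alpha, \beta > 0$ be absolute constants and $k = k(N)$ a slowly growing parameter to be chosen. The three ranges are (i) $q_1 \leq N/(\log N)^\beta$, (ii) $N/(\log N)^\beta < q_1 \leq N/(\log N)^{1/2}$, and (iii) $N/(\log N)^{1/2} < q_1 \leq N$.

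Range (i) follows the template of Section~\ref{sec3} almost verbatim: after separating off the $n$ without two distinct prime factors $p_1 < p_2 \leq N^\alpha$ different from $3$ (whose contribution is $\ll N (\log_2 N)^{O(1)} / (\log N)^{3/2}$), for each triple $(p_1, p_2, q_1)$ Theorem~\ref{thm2.1} imposes divisibility and Mann's quadratic-residue conditions that feed into an upper-bound sieve of dimension $\tfrac{5}{4}$. Combining H\"older's inequality with Lemma~\ref{lem3.3} and the tame sum $\sum_{q_1 \leq N/(\log N)^\beta} 1/q_1 \ll \log_2 N$ then produces a total bound of $\ll N (\log_2 N)^{O(1)} / (\log N)^{5/4}$, well within budget.

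Range (iii) drops the difference-set hypothesis entirely. Writing $y = 2n+1$, $x = q_1$, $z = q_2$ puts $(x,y,z)$ on the hyperboloid $y^2 - 4xz = -3$ with $x \in [N/(\log N)^{1/2}, N]$ and both $x, z$ prime. An enveloping (Selberg-style) sieve enforcing primality of $q_1$ and $q_2$, combined with the asymptotic lattice-point count on $y^2 - 4xz = -3$ with congruence/size restrictions (to be proved in Section~\ref{sec6} via an adaptation of Hooley's argument), produces the bound $\ll N/(\log N)^{3/2}$.

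The decisive case, and the source of the $\exp(-C\log_5 N/\log_6 N)$ factor, is range (ii). A direct sieve after fixing $q_1$ and summing over $I := [N/(\log N)^\beta, N/(\log N)^{1/2}]$ (where $\sum_{q_1 \in I} 1/q_1 \asymp \log_2 N/\log N$) is not sharp enough on its own. To make progress, I would first discard the $n \in S(N)$ having fewer than $k$ distinct prime factors below $\log_3 N$, which is negligible by a Hardy--Ramanujan / Tur\'an--Kubilius type bound for the $k$ that will be chosen. For each remaining $n$ and each $k$-tuple $p_1 < \cdots < p_k$ of primes $\leq \log_3 N$ with $p_i \neq 3$ and $p_1 \cdots p_k \mid n$, Theorem~\ref{thm2.1} forces each $p_i$ to be a quadratic residue modulo $q_1$; by quadratic reciprocity this pins $q_1$ to a union of residue classes modulo $8 p_1 \cdots p_k$ of density $2^{-k}$. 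Applying an upper-bound sieve for $n \leq N$ with $p_1\cdots p_k \mid n$ and $q_1 \mid n^2+n+1$ for each admissible $q_1$, summing over admissible $q_1 \in I$ (contributing $2^{-k}\log_2 N/\log N$), and summing over $k$-tuples via Mertens' theorem (contributing $(\log_4 N)^k/k!$) yields a bound of the shape
\[
\frac{N}{\log N} \cdot 2^{-k} \cdot \frac{(\log_4 N)^k}{k!} \cdot (\text{bounded Euler factors}).
\]
Choosing $k$ to minimize $2^{-k}(\log_4 N)^k/k!$ under the constraint that the discarded set remain negligible (balancing via Stirling leads to $k$ slowly growing, roughly $k \asymp \log_5 N/\log_6 N$ up to lower-order adjustment) then delivers the desired $\exp(-C\log_5 N/\log_6 N)$ saving over $N/\log N$. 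The principal technical hurdle is verifying that the $2^{-k}$ union-bound saving over $q_1$ is genuinely realized: this reduces, via reciprocity, to showing that the $k$ multiplicative-character conditions on $q_1$ modulo $8 p_1 \cdots p_k$ are independent and that PNT in arithmetic progressions on the scale of $I$ distributes primes uniformly among the residue classes involved.
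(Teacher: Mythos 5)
Your three-range decomposition, the change of variables $y=2n+1$ putting the pairs $(q_1,q_2)$ on $y^2-4xz=-3$, and the treatment of the top range via an enveloping sieve plus the Hooley-style lattice-point count all match the paper. But both of the other two ranges have genuine quantitative gaps. In range (i) your claimed saving of $(\log N)^{5/4}$ cannot hold once $q_1>N^{2/3}$: the sequence being sieved, $\{n\le N: p_1p_2\mid n,\ q_1\mid n^2+n+1\}$, has only about $2N/(p_1p_2q_1)$ elements (as few as $(\log N)^\beta/(p_1p_2)$ near the top of the range), so the sieve level is a power of $N/q_1$ and the saving is a power of $\log(N/q_1)\asymp\log_2 N$, not of $\log N$. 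Evaluating your scheme correctly, the sum over $N^{2/3}<q_1\le N/(\log N)^\beta$ of $1/(q_1(\log(N/q_1))^{5/4})$ gains only $(\log_2 N)^{1/4}$ over $1/\log N$, while the sums over the auxiliary primes $p_1<p_2$ of the Euler products cost $\asymp(\log_2 N)^2$ (Lemma~\ref{lem3.3} gives $\sum_p L/p\asymp\log_2 N$, and these sums are genuinely that large); the net is $\gg N(\log_2 N)^{7/4}/\log N$, which exceeds the trivial bound. The paper's Lemma~\ref{lem4.2} avoids introducing $p_1,p_2$ at all: for each $q$ it sieves with the full Mann condition that \emph{every} prime factor of $n$ is a square mod $q$ together with ``$n^2+n+1$ has no other small prime factor'' (dimension $3/2$), replaces the truncated product $L(1,\chi_{q^*};(N/q)^\gamma)$ by $L(1,\chi_{q^*})$ via Granville--Soundararajan (a step your sketch also omits), and then averages over $q$ by Lemma~\ref{lem3.3} and partial summation, getting $N/(\log N(\log_2 N)^{1/2})$ from the integral $\int dt/(t\log t\,\log(N/t)^{3/2})$, not from any power of $\log N$.

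In range (ii) the union over \emph{all} $k$-tuples of primes $\le\log_3 N$ dividing $n$ destroys the $2^{-k}$ gain. Each surviving $n$ has at least $k$ (typically about $\log_5 N$) such prime factors, so your tuple sum is $\sum 1/(p_1\cdots p_k)\approx(\log_5 N)^k/k!$ (note also that $\sum_{p\le\log_3 N}1/p\approx\log_5 N$, not $\log_4 N$), and
\[
2^{-k}\,\frac{(\log_5 N)^k}{k!}\;\ge\;\Bigl(\frac{\log_5 N}{2k}\Bigr)^{k}\;\ge\;\Bigl(\tfrac{3}{2}\log_6 N\Bigr)^{k}\;\gg\;1
\]
for every $k\ll\log_5 N/\log_6 N$ (and $\ge (e/2)^k\ge 1$ for all $k\le\log_5 N$), so no choice of $k$ makes your displayed bound smaller than $N/\log N$. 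The fix, which is exactly what Lemma~\ref{lem4.3} does, is to condition on the $k$ \emph{smallest} prime factors of $n$: the cells $T_{p_1,\dots,p_k}(N)$ are then disjoint, and the sieve for each cell carries the extra factor $\prod_{p<p_k,\,p\neq p_i}(1-1/p)$ coming from ``no other prime factor below $p_k$'', which makes the sum over tuples $O(1)$ and lets the $2^{-k}$ from Mann's condition survive intact; the equidistribution of $q_1$ in residue classes to the modulus $8p_1\cdots p_k\le(\log_3 N)^{k+1}$, which you flag as the main hurdle, is in fact immediate from Siegel--Walfisz at that tiny modulus.
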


Note that $n^2<n^2+n+1<(n+1)^2$ for all $n\geq 1$, so that $n^2+n+1$ is never the square of a prime. As outlined in Section~\ref{sec2}, we split the $n\in S(N)$ such that $n^2+n+1=q_1q_2$ with $q_1<q_2$ into three sets depending on the size of $q_1$:
\begin{equation}\label{eq4.1}
\left\{n\in S(N):n^2+n+1=q_1q_2\text{ with }q_1<q_2\text{ and }q_1\leq\f{N}{(\log{N})^{\beta}}\right\}
\end{equation}
\begin{equation}\label{eq4.2}
\left\{n\in S(N):n^2+n+1=q_1q_2\text{ with }q_1<q_2\text{ and }\f{N}{(\log{N})^{\beta}}<q_1\leq\f{N}{(\log{N})^{\f{1}{2}}}\right\}
\end{equation}
and
\begin{equation}\label{eq4.3}
\left\{n\in S(N):n^2+n+1=q_1q_2\text{ with }q_1<q_2\text{ and }\f{N}{(\log{N})^{\f{1}{2}}}<q_1\leq N\right\},
\end{equation}
for some $\beta>1$ to be fixed shortly. To prove Proposition~\ref{prop4.1}, it suffices to bound the size of each of the above three sets. Indeed, $n^2+n+1$ must have a prime factor of size at most $N$ whenever $n\leq N$ and $\Omega(n^2+n+1)= 2$, since $n^2+n+1<(n+1)^2$.

We begin by bounding the size of~\eqref{eq4.1}, using a modification of the argument presented in Section~\ref{sec3}.
\begin{lemma}\label{lem4.2}
There exists a $\beta>1$ such that
  \[
\#\left\{n\in S(N):n^2+n+1=q_1q_2\text{ with }q_1<q_2\text{ and }q_1\leq\f{N}{(\log{N})^{\beta}}\right\}\ll\f{N}{\log{N}(\log_2{N})^{\f{1}{2}}}.
  \]
\end{lemma}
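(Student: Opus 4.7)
The plan is to bound the cardinality $q_1$-by-$q_1$ using an upper bound sieve, then sum over $q_1$ using Lemma~\ref{lem3.3}. For each prime $q_1 \leq N/(\log N)^\beta$, define
\[
T_{q_1}(N) := \{n \leq N : q_1 \mid n^2+n+1,\ (n^2+n+1)/q_1 \text{ is prime},\ (r/q_1) = 1 \text{ for every prime } r \mid n\}.
\]
By Theorem~\ref{thm2.1}, every non-square element of the set in the statement of Lemma~\ref{lem4.2} lies in a unique $T_{q_1}(N)$ (with $q_1$ the smaller prime factor of $n^2+n+1$), while the perfect squares $n \leq N$ contribute only $O(\sqrt{N})$. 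Thus it suffices to bound $\sum_{q_1 \leq N/(\log N)^\beta} |T_{q_1}(N)|$.

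For each fixed $q_1$, I would apply a Selberg-type upper bound sieve to the sequence $\{n \leq N : q_1 \mid n^2+n+1\}$ (of size $\sim 2N/q_1$). For each prime $r \neq q_1$, the bad residues modulo $r$ form the disjoint union of those on which $r \mid n^2+n+1$---namely $2$ residues when $r \equiv 1 \pmod 3$, $1$ residue when $r = 3$, and $0$ otherwise---and the single residue $n \equiv 0 \pmod r$ in the case $\chi_{q_1^*}(r) = -1$; disjointness is immediate since $r \mid n$ and $r \mid n^2+n+1$ would force $r \mid 1$. The first family enforces the prime-cofactor condition on $(n^2+n+1)/q_1$, and the second enforces Mann. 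With the combined weight $g(r) = 2[r \equiv 1 \pmod 3] + [\chi_{q_1^*}(r) = -1]$ (with the obvious minor adjustments at $r = 3$ and $r = q_1$), the prime number theorem in arithmetic progressions gives $\sum_{r < z} g(r) \log r/r = \tfrac{3}{2}\log z + O(1)$, so this is a sieve of dimension $\tfrac{3}{2}$. Standard Euler product manipulations analogous to those at the end of the proof of Lemma~\ref{lem3.2} then yield
\[
|T_{q_1}(N)| \ll \frac{N}{q_1 (\log N)^{3/2}}\, L(1, \chi_{q_1^*}; N^\gamma)^{1/2}
\]
for some absolute $\gamma > 0$, where the bounded contribution of $L(1, \chi_{-3}; N^\gamma)$ has been absorbed into the implicit constant.

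Summing over $q_1 \leq N/(\log N)^\beta$ and applying the Cauchy--Schwarz inequality,
\[
\sum_{q_1 \leq N/(\log N)^\beta} \frac{L(1, \chi_{q_1^*}; N^\gamma)^{1/2}}{q_1} \leq \left(\sum_{q_1 \leq N} \frac{1}{q_1}\right)^{1/2}\left(\sum_{q_1 \leq N} \frac{L(1, \chi_{q_1^*}; N^\gamma)}{q_1}\right)^{1/2} \ll \log_2{N},
\]
where the first factor is handled by Mertens' theorem and the second by Lemma~\ref{lem3.3} with partial summation. This gives the overall bound $\ll N \log_2 N / (\log N)^{3/2}$, which is strictly stronger than the claimed $N/(\log N (\log_2 N)^{1/2})$ once $(\log_2 N)^3 \leq \log N$. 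The main technical hurdle is the combined sieve step: verifying that the weight $g$ really produces a genuine dimension-$\tfrac{3}{2}$ Selberg sieve with the advertised $L$-factor in the main term, and checking that the level-of-distribution hypothesis is met for the composite moduli involved (harmless since one only sifts up to $N^\gamma$ for a small $\gamma > 0$). The hypothesis $\beta > 1$ serves to keep $(n^2+n+1)/q_1 \gg (\log N)^\beta$ large enough for the cofactor sieve to be non-trivial and to line up with the sub-cases of Section~\ref{sec4} treated subsequently.
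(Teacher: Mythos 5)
Your outline—union bound over the smaller prime factor $q_1$, a dimension-$\frac{3}{2}$ upper bound sieve combining the Mann condition on $n$ with the prime-cofactor condition on $(n^2+n+1)/q_1$, then a sum over $q_1$ via Lemma~\ref{lem3.3}—is the same as the paper's. The genuine gap is in the uniformity of your sieve step. The sequence you sift, $\{n\leq N: q_1\mid n^2+n+1\}$, has length about $2N/q_1$, which for $q_1$ near $N/(\log N)^{\beta}$ is only about $(\log N)^{\beta}$. The level of distribution (i.e.\ the remainder terms) therefore limits the sifting range to a small power of $N/q_1$, not of $N$, so the bound $\#T_{q_1}(N)\ll \frac{N}{q_1(\log N)^{3/2}}L(1,\chi_{q_1^*};N^{\gamma})^{1/2}$ cannot be obtained uniformly for all $q_1\leq N/(\log N)^{\beta}$; what the sieve actually gives in the range $N^{2/3}<q_1\leq N/(\log N)^{\beta}$ is $\ll \frac{N}{q_1}\cdot L(1,\chi_{q_1^*};(N/q_1)^{\gamma})^{1/2}(\log(N/q_1))^{-3/2}$, and for the largest admissible $q_1$ the saving is only a power of $\log_2 N$. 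This is exactly why the paper splits into $q\leq N^{2/3}$ and $N^{2/3}<q\leq N/(\log N)^{\beta}$, and why the lemma's final bound is only $N/(\log N(\log_2 N)^{1/2})$: the dominant contribution comes from $q_1$ close to $N/(\log N)^{\beta}$, where the sieve is weakest. That your argument ends with the stronger bound $N\log_2 N/(\log N)^{3/2}$ (likely the true order, but beyond what this method proves) is the symptom of the assumed, unproved uniformity.

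Once the sieve level is corrected, two further steps you omit become necessary, and they are the real content of the paper's proof. First, the truncation point of the Euler product now depends on $q_1$, so Lemma~\ref{lem3.3} no longer applies directly; the paper fixes $\beta=16/\gamma$ so that $(N/q_1)^{\gamma}\geq(\log N)^{16}$ and invokes Proposition~2.2 of Granville--Soundararajan to replace $L(1,\chi_{q^*};(N/q)^{\gamma})$ by $L(1,\chi_{q^*})$ for all but $O(N^{1/2})$ moduli (this, not the size of the cofactor, is the role of $\beta$: the cofactor $q_2$ is automatically larger than $n$, so it is always huge). Second, the resulting sum $\sum_{N^{2/3}<q\leq N/(\log N)^{\beta}}L(1,\chi_{q^*})^{1/2}q^{-1}(\log(N/q))^{-3/2}$ must be handled by partial summation against Lemma~\ref{lem3.3}, and it is this integral estimate that produces the $(\log_2 N)^{-1/2}$ saving in the statement. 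Your treatment of the range $q_1\leq N^{2/3}$, including the combined weight $g$ and the disjointness of the two sifted residue classes, is fine and matches the paper's $g_4$; the missing ideas all concern the large-$q_1$ range.
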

\begin{proof}
  Arguing as in the proof of Lemma~\ref{lem3.2}, we first note that the left-hand side of the desired inequality is bounded above by
    \[
\sum_{q\leq \f{N}{(\log{N})^\beta}}\# S_{4}^q(N)+O(N^{\f{1}{2}}),
  \]
  where
  \[
S_4^q(N):=\left\{n\leq N:q\mid n^2+n+1\text{ and }p\mid n\implies \left(\f{p}{q}\right)=1\right\}.
\]
By an upper bound sieve, we have that
\[
  \#S_4^q(N)\ll\begin{cases}\f{N}{q\log(N/q)}\prod_{p'<(N/q)^\gamma}\left(1-\f{g_4(p')}{p'}\right) & N^{\f{2}{3}}<q\leq \f{N}{(\log{N})^\beta} \\
  \f{N}{q\log{N}}\prod_{p'<N^\gamma}\left(1-\f{g_4(p')}{p'}\right) & q\leq N^{\f{2}{3}} \\\end{cases},
\]
where
\[
g_4(p'):=\begin{cases} 1 & \left(\f{p'}{q}\right)=-1 \\ 0 & \text{otherwise}\end{cases}+\begin{cases} 2 & \left(\f{-3}{p'}\right)=1 \\ 1 & p'=3 \\ 0 &\text{ otherwise}\end{cases}
\]
and $\gamma>0$ is an absolute constant. Since
\[
\prod_{p'<y}\left(1-\f{g_4(p')}{p'}\right)\ll\f{L(1,\chi_{q^*};y)^{\f{1}{2}}}{(\log{y})^{\f{3}{2}}}
\]
for all $y>0$, it thus suffices to bound
\begin{equation}\label{eq4.4}
\f{1}{(\log{N})^{\f{3}{2}}}\sum_{q\leq N^{\f{2}{3}}}\f{L(1,\chi_{q^*};N^\gamma)^{\f{1}{2}}}{q}+\sum_{N^{\f{2}{3}}<q\leq \f{N}{(\log{N})^{\beta}}}\f{L(1,\chi_{q^*};(N/q)^\gamma)^{\f{1}{2}}}{q(\log(N/q))^{\f{3}{2}}}.
\end{equation}

That the first term of~\eqref{eq4.4} is $\ll\f{\log_2{N}}{(\log{N})^{3/2}}$ was already observed in the proof of Proposition~\ref{prop3.1}. To remove the dependence of the number of factors in the product $L(1,\chi_{q^*};(N/q)^\gamma)$ on $q$ in the second term, we will use that it can be well-approximated by $L(1,\chi_{q^*})$ for most sufficiently small $q$.

Indeed, set $\beta:=\f{64}{\gamma}$. Since we have $(N/q)^{\gamma}\geq(\log{N})^{64}$ for all $q\leq\f{N}{(\log{N})^\beta}$, it follows from Proposition~2.2 of~\cite{GranvilleSound2003} with $A=4$ and $D=N$ that $L(1,\chi_{q^*};(N/q)^\gamma)\ll L(1,\chi_{q^*})$ for all but at most $N^{\f{1}{2}}$ moduli $N^{\f{2}{3}}<q\leq \f{N}{(\log{N})^\beta}$. Using the trivial bound $L(1,\chi;(N/q)^\gamma)\ll\log{(N/q)}$, the contribution to the sum~\eqref{eq4.4} coming from these $N^{\f{1}{2}}$ moduli is $\ll N^{-\f{1}{6}}$, which is more than admissible. It thus remains to bound
  \[
\sum_{N^{\f{2}{3}}<q\leq \f{N}{(\log{N})^{\beta}}}\f{L(1,\chi_{q^*})^{\f{1}{2}}}{q(\log(N/q))^{\f{3}{2}}}
  \]
by the positivity of $L(1,\chi_{q^*})$. Setting $f(t):=\f{1}{t\log(N/t)^{3/2}}$, we apply partial summation to bound the sum above by
\[
f\left(\f{N}{(\log{N})^\beta}\right)\sum_{q\leq \f{N}{(\log{N})^{\beta}}}L(1,\chi_{q^*})^{\f{1}{2}}-\int_{N^{\f{2}{3}}}^{\f{N}{(\log{N})^\beta}}f'(t)\sum_{q\leq t}L(1,\chi_{q^*})^{\f{1}{2}}dt,
\]
which, by Lemma~\ref{lem3.3}, is
\[
\ll\f{1}{\log{N}(\log_2{N})^{\f{3}{2}}}+\int_{N^{\f{2}{3}}}^{\f{N}{(\log{N})^\beta}}\f{1}{t(\log{t})\log(N/t)^{\f{3}{2}}}dt.
\]
Noting that
\[
  \int_{N^{\f{2}{3}}}^{\f{N}{(\log{N})^\beta}}\f{1}{t(\log{t})\log(N/t)^{\f{3}{2}}}dt\ll\f{1}{\log{N}}\int_{N^{\f{2}{3}}}^{\f{N}{(\log{N})^\beta}}\f{1}{t\log(N/t)^{\f{3}{2}}}dt\ll\f{1}{\log{N}(\log_2{N})^{\f{1}{2}}}
\]
completes the proof of the lemma.
\end{proof}

Next, we bound the size of~\eqref{eq4.2}.
\begin{lemma}\label{lem4.3}
  We have
  \[
\#\left\{n\in S(N):n^2+n+1=q_1q_2\text{ with }q_1<q_2\text{ and }\f{N}{(\log{N})^{\beta}}<q_1\leq\f{N}{(\log{N})^{\f{1}{2}}}\right\}\ll_{\beta}\f{N}{\log{N}\exp(C\f{\log_5{N}}{\log_6{N}})},
  \]
  for some absolute constant $C>0$.
\end{lemma}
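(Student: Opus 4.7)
I follow the outline sketched in Section~\ref{sec2}. Set $I := [N/(\log N)^\beta, N/(\log N)^{1/2}]$ and fix a parameter $k = k(N) \to \infty$ slowly; concretely $k = \lceil 2\log_5 N \rceil$ suffices (and gives decay much stronger than required). Discarding the $O(\sqrt N)$ perfect squares (to which Theorem~\ref{thm2.1} does not apply), I split the remaining $n$ according to whether $\omega_{\log_3 N}(n) \geq k$ (the main case $A$) or $\omega_{\log_3 N}(n) < k$ (the exceptional case $B$), where $\omega_y(n)$ denotes the number of distinct prime factors of $n$ below $y$.

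For the main case I parametrize by the $k$ smallest prime factors $p_1 < \cdots < p_k \leq \log_3 N$ of $n$ and apply the union bound over such tuples. By Theorem~\ref{thm2.1}, each $p_i$ must be a quadratic residue modulo $q_1$. By quadratic reciprocity this pins $q_1$ to a union of $2^{-k}\phi(4 p_1 \cdots p_k)$ reduced residue classes mod $4 p_1 \cdots p_k$, and since $4 p_1 \cdots p_k \leq (\log_3 N)^{k+O(1)} = N^{o(1)}$ the Siegel--Walfisz theorem applies uniformly, yielding
\[
\sum_{\substack{q_1 \in I \\ (p_i/q_1) = 1\ \forall i}} \frac{1}{q_1} \sim 2^{-k}(\beta - \tfrac{1}{2}) \frac{\log_2 N}{\log N}.
\]
For each admissible $q_1$ and tuple, a dimension-$1$ Selberg upper-bound sieve applied to the polynomial $(n^2+n+1)/q_1$ (whose discriminant in the variable $m$ with $n = q_1 m + r$ is $-3$, giving uniform local sieve constants) shows that the count of $n \leq N$ with $p_1 \cdots p_k \mid n$, $q_1 \mid n^2+n+1$, and $(n^2+n+1)/q_1$ prime is $\ll N/(p_1\cdots p_k\, q_1 \log_2 N)$, using $\log(N/q_1) \asymp \log_2 N$ for $q_1 \in I$. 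Combining these and summing $\sum_{p_1<\cdots<p_k \leq \log_3 N}(p_1\cdots p_k)^{-1} \leq (\log_5 N)^k/k!$ yields
\[
|A \cap (\text{set in lemma})| \ll_\beta \frac{N}{\log N} \cdot \frac{(\log_5 N/2)^k}{k!},
\]
which by Stirling's formula is $\ll N/(\log N \exp(C \log_5 N/\log_6 N))$ for the chosen $k$.

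For the exceptional case I use a joint sieve combining the conditions that $n$ has few small prime factors and that $n^2+n+1$ is a product of two primes. For each $j < k$ and each $j$-tuple $p_1 < \cdots < p_j \leq \log_3 N$, I estimate the number of $n \leq N$ with $p_1\cdots p_j \mid n$, no other prime $\leq \log_3 N$ dividing $n$, $q_1 \mid n^2+n+1$, and $(n^2+n+1)/q_1$ prime. At every prime $p' \leq \log_3 N$ outside $\{p_1, \ldots, p_j, q_1\}$ the two disjoint events $p' \mid n$ and $p' \mid (n^2+n+1)/q_1$ together have density $(1+\rho(p'))/p'$ (average $2/p'$, so effective dimension $2$ on this range), while at larger primes only the latter is sieved (dimension $1$). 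A mixed-dimension Selberg upper bound then yields a contribution $\ll N/(q_1 p_1 \cdots p_j \log_4 N \log N)$ per $q_1$; summing over $q_1 \in I$, over the $j$-tuples, and over $j < k$ gives $|B \cap (\text{set in lemma})| \ll_\beta N\log_2 N / (\log N)^2$, which is $\ll N/(\log N \exp(C\log_5 N/\log_6 N))$ for any $C$ and large $N$ because $\log_2 N \gg \log_5 N/\log_6 N$. The main technical obstacle is the careful uniformity in the mixed-dimension Selberg sieve and in Siegel--Walfisz across the range of moduli, but both are routine once one verifies that all moduli of interest remain of size $N^{o(1)}$.
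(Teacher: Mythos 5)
Your main case (your set $A$) is essentially the paper's argument: parametrize by the $k$ smallest prime factors, use Theorem~\ref{thm2.1} plus quadratic reciprocity to confine $q_1$ to a $2^{-k}$ proportion of residue classes modulo $4p_1\cdots p_k$, and sieve $n$ in the progression modulo $p_1\cdots p_kq_1$ to exploit the primality of the cofactor, which (as you correctly say there) yields only a $\f{1}{\log_2 N}$ saving because $\log(N/q_1)\asymp_\beta\log_2 N$. The genuine gap is in your exceptional case $B$. There you claim a per-$q_1$ bound $\ll N/(q_1p_1\cdots p_j\log_4 N\log N)$, i.e.\ a full $\f{1}{\log N}$ saving from the condition that $(n^2+n+1)/q_1$ is prime. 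This is unobtainable: once $q_1\geq N/(\log N)^\beta$ is fixed, $n$ runs over a progression modulo $q_1p_1\cdots p_j$ containing only $O_\beta((\log N)^{\beta})$ integers, so any Selberg-type upper bound sieve there has level at most a small power of $\log N$ and the ``cofactor prime'' condition can only be exploited to the tune of $\f{1}{\log_2 N}$ --- exactly the saving you (correctly) claimed for the identical situation in case $A$; sieving up to level $N^{c}$ would produce error terms that swamp the main term and, summed over $q_1$, exceed $N$. This matters because with your choice $k=\lceil 2\log_5 N\rceil$ the set $B$ (fewer than $k$ prime factors below $\log_3 N$) contains almost all $n\leq N$, so the entire saving must come from the arithmetic of $n^2+n+1$; replacing your $\f{1}{\log N}$ by the attainable $\f{1}{\log_2 N}$, and noting that you invoke no quadratic-residue restriction on $q_1$ in case $B$, your case-$B$ total degrades to $\gg \f{N}{\log N}$, which is useless.

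The paper avoids this by taking $k=\lfloor\f{\log_5 N}{3\log_6 N}\rfloor$, so that the exceptional sets ($\omega(n)<k$, or $p_k(n)>\log_3 N$) are genuinely sparse and can be handled by crude sieve bounds of the shape $\f{N}{(\log N)^2}(C\log_2 N)^k$ and $\f{N(C'\log_5 N)^k}{\log N\log_4 N}$; it is precisely this second bound that forces $k\ll\f{\log_5 N}{\log_6 N}$ and hence the $\exp(-C\f{\log_5 N}{\log_6 N})$ shape of the lemma. If you want to keep your larger $k$, case $B$ can be repaired by running the same machinery as in case $A$ on each $j$-tuple with $j<k$: apply Theorem~\ref{thm2.1} to gain $2^{-j}$ in the $q_1$-sum, keep the condition that no other prime $\leq\log_3 N$ divides $n$ (a $\f{1}{\log_4 N}$ saving), and use the attainable $\f{1}{\log_2 N}$ saving for the cofactor; summing $\sum_{j<k}\f{(\log_5 N/2+O(1))^j}{j!}\ll(\log_4 N)^{1/2+o(1)}$ then gives roughly $\f{N}{\log N(\log_4 N)^{1/2-o(1)}}$, which is admissible --- but this is essentially the paper's argument again, and without some such use of the quadratic-residue conditions in case $B$ your decomposition does not close.
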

\begin{proof}
  We begin by splitting $n$ in~\eqref{eq4.2} up based on the smallest $k=k(N)$ prime factors of $n$. Note that the size of~\eqref{eq4.2} is at most
\begin{align*}
&\sum_{p_1<\dots<p_k\leq\log_3{N}}\# T_{p_1,\dots,p_k}(N) \\
  &\qquad+\#\{n\leq N:p_1(n^2+n+1)\geq N^{\f{2}{3}}\text{ and }\omega(n)<k\}\\
  &\qquad+\#\{n\leq N:p_1(n^2+n+1)\geq N^{\f{2}{3}},\ \omega(n)\geq k,\text{ and }p_{k}(n)>\log_3{N}\},
\end{align*}
where
\begin{align*}
  T_{p_1,\dots,p_k}(N):=\{n\in S(N):n^2+n+1=q_1q_2\text{ with }q_1<q_2\text{ and }\ \f{N}{(\log{N})^{\beta}}<q_1\leq\f{N}{(\log{N})^{\f{1}{2}}},&\\
  \omega(n)\geq k,\text{ and }p_i(n)=p_i\text{ for each }i=1,\dots,k&\}.
\end{align*}

The quantity $\#\{n\leq N:p_1(n^2+n+1)\geq N^{\f{2}{3}}\text{ and }\omega(n)<k\}$ is bounded above by
\[
\sum_{j=1}^{k-1}\sum_{\substack{m\leq N^{\f{j-1}{j}} \\ \omega(m)=j-1}}\#\left\{n\leq\f{N}{m}:p_1\left((nm)^2+nm+1\right)\geq N^{\f{2}{3}}\text{ and }\omega(n)=1\right\},
\]
which, by an application of an upper bound sieve, is $\ll\f{N}{(\log{N})^2}(C\log_2{N})^k$
for some absolute constant $C>0$. If $k\leq \f{\log_2{N}}{3\log_3{N}}$, then the right-hand side of the above inequality is certainly $\ll\f{N}{(\log{N})^{3/2}}$, which is admissible. A similar argument shows that the quantity
\[
\#\{n\leq N:p_1(n^2+n+1)\geq N^{\f{2}{3}},\ \omega(n)\geq k,\text{ and }p_{k}(n)>\log_3{N}\}
\]
is $\ll\f{N(C'\log_5{N})^{k}}{\log{N}\log_4{N}}$ for some absolute constant $C'>0$. If $k\leq \f{\log_5{N}}{3\log_6{N}}$, the right-hand side of the above inequality is $\ll\f{N}{\log{N}(\log_4{N})^{1/2}}$, which is, again, admissible. So assume, for the remainder of the proof, that $k=\lfloor \f{\log_5{N}}{3\log_6{N}}\rfloor$.

It remains to bound $\sum_{p_1<\dots<p_k\leq\log_3{N}}\# T_{p_1,\dots,p_k}(N)$. We apply Theorem~\ref{thm2.1} and split each $n\in T_{p_1,\dots,p_k}(N)$ up based on the prime factor $q$ of $n^2+n+1$ lying in the interval $[\f{N}{(\log{N})^\beta},\f{N}{(\log{N})^{1/2}}]$ to get
\[
\#T_{p_1,\dots,p_k}(N)\leq \sum_{\substack{\f{N}{(\log{N})^\beta}\leq q\leq\f{N}{(\log{N})^{1/2}} \\\left(\f{p_i}{q}\right)=1,\ i=1,\dots,k}}\# T^q_{p_1,\dots,p_k}(N)+O(N^{\f{1}{2}}),
\]
where
\[
T_{p_1,\dots,p_k}^q(N):=\{n\in T_{p_1,\dots,p_k}(N):q\mid n^2+n+1\}.
\]

Since $\prod_{p<\log_3{N}}p\ll(\log_2{N})^{1+o(1)}$, by an application of an upper bound sieve, we have
\[
\#T_{p_1,\dots,p_k}^{q}(N)\ll\f{N}{qp_1\cdots p_k}\prod_{\substack{p<p_k \\ p\neq p_i}}\left(1-\f{1}{p}\right)\f{\log{k}}{\log_2{N}}.
\]
Summing over $q\in[\f{N}{(\log{N})^\beta},\f{N}{(\log{N})^{1/2}}]$ such that $\left(\f{p_i}{q}\right)=1$ for each $i=1,\dots,k$ yields
\[
  \# T_{p_1,\dots,p_k}(N)\ll_\beta\f{\log{k}}{2^k}\f{1}{p_1\cdots p_k}\prod_{\substack{p<p_k \\ p\neq p_i}}\left(1-\f{1}{p}\right)\f{N}{\log{N}},
\]
and then summing over $p_1<\dots<p_k\leq\log_3{N}$ yields
\[
  \sum_{p_1<\dots<p_k\leq\log_3{N}}\# T_{p_1,\dots,p_k}(N)\ll_\beta\f{\log{k}}{2^k}\f{N}{\log{N}}\sum_{p_1<\dots<p_k\leq\log_3{N}}\f{\prod_{\substack{p<p_k \\ p\neq p_i}}\left(1-\f{1}{p}\right)}{p_1\cdots p_k}\ll_\beta\f{\log{k}}{2^k}\f{N}{\log{N}},
\]
where for the last inequality we use the fact that $\sum_{p_1<\dots<p_k\leq\log_3{N}}\f{\prod_{\substack{p<p_k \\ p\neq p_i}}\left(1-\f{1}{p}\right)}{p_1\cdots p_k}$ is the density of integers with $k$ prime factors below $\log_3{N}$, which, being a density, forces it to be at most $1$. Recalling our choice of $k$ now gives the conclusion of the lemma.
\end{proof}

Finally, we bound the size of~\eqref{eq4.3} using an enveloping sieve argument. We will require the following lattice point counting lemma, whose proof we defer to Section~\ref{sec6}.

\begin{lemma}\label{lem4.4}
There exist absolute constants $C>0$ and $0<\delta_1,\delta_1'<1$ such that the following holds.  For all $k,\ell\leq X^{\delta_1'}$ with $2,3\nmid k\ell$, we have
  \[
    \#\{(x,y,z)\in\Z^3:y^2-4xz=-3,\ 1\leq x,z\leq X,\ k\mid x,\text{ and }\ell\mid z\}= C\rho'(k\ell)\f{X}{k\ell}+O(X^{1-\delta_1}),
  \]
  where $\rho'$ is the multiplicative function defined by $\rho'(n):=\rho(n)\prod_{p\mid n}\left(1+\f{\chi_{-3}(p)}{p}\right)\1$ with $\rho(n):=\#\{a\Mod{n}:a^2\equiv -3\Mod{n}\}$.
\end{lemma}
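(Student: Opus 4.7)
The plan is to adapt Hooley's method for $\sum d(y^2+a)$, treating the constraint $y^2 + 3 = 4xz$ as a factorization problem with divisibility conditions on the factors. A parity check shows that $y^2 \equiv -3 \pmod 4$ forces $y$ to be odd, whence $y^2+3 \equiv 4 \pmod 8$ and $m := (y^2+3)/4$ is odd, so both $x$ and $z$ must be odd (consistent with $2 \nmid k\ell$). Using Dirichlet's hyperbola trick together with the symmetry $(x,z) \leftrightarrow (z,x)$ that swaps the roles of $k$ and $\ell$, I would split
\[
N(X;k,\ell) = N_+(X;k,\ell) + N_+(X;\ell,k) - O(1),
\]
where $N_+$ counts solutions with $x \leq z$, and the diagonal $x=z$ reduces to $(2x-y)(2x+y) = 3$, contributing only $O(1)$.

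For $N_+(X;k,\ell)$, I would parametrize by $x$: for each odd $x$ with $k \mid x$ and $1 \leq x \leq \sqrt{X}$, the problem becomes counting odd $y > 0$ satisfying $y^2 \equiv -3 \pmod{4\ell x}$ in the range $\sqrt{4x^2-3} \leq y \leq \sqrt{4xX-3}$ (which enforces $z \in [x, X]$ and $\ell \mid z$). By definition there are $\rho(4\ell x)$ such residues modulo $4\ell x$, so a trivial equidistribution-in-intervals argument gives
\[
\frac{\rho(4\ell x)}{4\ell x}\bigl(\sqrt{4xX-3} - \sqrt{4x^2-3}\bigr) + O(\rho(4\ell x))
\]
for each $x$. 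Summing the error over $x \leq \sqrt{X}$ with $k \mid x$ yields $\ll (X^{1/2}/k)(k\ell)^{\epsilon}$, which is $O(X^{1-\delta_1})$ for some $\delta_1 > 0$ under the hypothesis $k, \ell \leq X^{\delta_1'}$ with $\delta_1'$ small enough.

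The main obstacle is then evaluating the main term
\[
\sum_{\substack{x \leq \sqrt{X} \\ k \mid x,\ x \text{ odd}}} \frac{\rho(4\ell x)}{\ell x}\bigl(\sqrt{xX}-x\bigr)
\]
(together with its $(k \leftrightarrow \ell)$-symmetric counterpart) and showing it equals $C\rho'(k\ell) X/(k\ell) + O(X^{1-\delta_1})$ uniformly in $k$ and $\ell$. My approach would be to substitute $x = kx'$, apply the multiplicativity of $\rho$ (with $\rho(p^a) = 1 + \chi_{-3}(p)$ for $p > 3$), and evaluate the corresponding Dirichlet series by Perron's formula, using a standard zero-free region for $L(s, \chi_{-3})$ to obtain the power-saving error. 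The Euler factor $(1+\chi_{-3}(p)/p)^{-1}$ defining $\rho'$ should arise naturally when one isolates the local contributions of primes dividing $k\ell$ and compares with the unconstrained sum; the real technical difficulty is propagating the weight $\sqrt{xX} - x$ (which I would handle by a smooth partition of unity in $x$) and tracking the combinatorics of the local factors so that the final dependence on $k$ and $\ell$ appears in the form $\rho'(k\ell)/(k\ell)$. This is the genuine adaptation of Hooley's argument to which the paper refers.
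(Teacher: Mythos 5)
There is a genuine gap, and it is at the very first step: the reduction to $x\leq\sqrt{X}$ is false. The region of summation is the \emph{box} $1\leq x,z\leq X$, not the hyperbolic region $xz\leq X$, so Dirichlet's hyperbola trick does not truncate the smaller variable at $\sqrt{X}$. After symmetrizing to $x\leq z$, the variable $x$ still runs all the way up to $X$ (e.g.\ solutions with $x\asymp z\asymp X$ and $y\asymp 2X$ are perfectly admissible, since $4xz=y^2+3$ can be as large as $\asymp X^2$); equivalently, for a fixed value of $m=(y^2+3)/4\leq X^2$ the smaller divisor in the factorization $m=xz$ can be as large as $\sqrt{m}\asymp X$. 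Once $x$ ranges up to $X$, your "trivial equidistribution-in-intervals" bound contributes an error $O(\rho(4\ell x))$ for each $x$ with $k\mid x$, and summing this over $x\leq X$ gives $\gg \f{X}{k}X^{o(1)}$, which is not $O(X^{1-\delta_1})$ uniformly in $k$ (take $k$ bounded) and can even exceed the main term $\asymp \f{X}{k\ell}$ by a factor of roughly $\ell$. So the proposal's error analysis collapses exactly in the range $x\in(\sqrt{X},X]$, which it never addresses.

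This missing range is the analytic heart of the lemma, and it is why the paper does not argue trivially: it keeps the full sum over $x\leq X$, writes the count of $y\equiv\nu\Mod{4\ell x}$ in $|y|\leq\sqrt{4xX-3}$ exactly via the sawtooth function $\psi$, expands $\psi$ in a finite Fourier series, and then must prove cancellation in the resulting Weyl sums $\sum_{x}\sum_{\nu^2\equiv-3\,(4\ell x)}e\!\left(h\f{\nu}{4\ell x}\right)$; this is done by importing the Duke--Friedlander--Iwaniec bound (Proposition~6.1) on averages of exponential sums over roots of quadratic congruences, applied on dyadic blocks with smooth weights. Your treatment of the main term (multiplicativity of $\rho$, Perron/contour integration against $\zeta(s)L(s,\chi_{-3})/\zeta(2s)$-type Dirichlet series, isolating the local factors at $p\mid k\ell$ to produce $\rho'(k\ell)/(k\ell)$) is in the same spirit as the paper's and could be made to work, but without a substitute for the equidistribution of roots $\nu$ of $\nu^2\equiv-3\Mod{4\ell x}$ for $x$ as large as $X$ — i.e.\ without the Hooley/DFI input or something equally strong — the stated power-saving error term cannot be reached by this route.
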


Lemma~\ref{lem4.5} below gives a bound for~\eqref{eq4.3} after forgetting the condition $n\in S(N)$ and making a change of variables.
\begin{lemma}\label{lem4.5}
  Let $X>0$. Then
  \begin{equation}\label{eq4.5}
\#\left\{(p_1,p_2)\in\left[X^{\f{1}{2}},X\right]^2: 4p_1p_2-3\text{ is a perfect square}\right\}\ll\f{X}{(\log{X})^2}.
  \end{equation}
\end{lemma}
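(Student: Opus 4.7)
The plan is to upper-bound the quantity in~\eqref{eq4.5} by a two-variable Selberg sieve applied to the lattice point count provided by Lemma~\ref{lem4.4}. Each pair $(p_1,p_2)$ counted corresponds (with $y\geq 0$) to a unique lattice point $(p_1,y,p_2)\in T:=\{(x,y,z)\in\Z^3:y^2-4xz=-3\}$ with $X^{\f{1}{2}}\leq x,z\leq X$. Since the primes $p_1,p_2$ are automatically coprime to $P(Y):=\prod_{3<p\leq Y}p$ for any $Y\leq X^{\f{1}{2}}$, it suffices to bound
\[
\#\{(x,y,z)\in T:1\leq x,z\leq X,\ (xz,P(Y))=1\},
\]
and I would take $Y=X^{\eta}$ for a small $\eta>0$ to be specified.

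Next I would apply a product of two one-dimensional Selberg upper bound sieves, one in $x$ and one in $z$. Taking Selberg weights $(\lambda_k),(\lambda'_\ell)$ supported on squarefree $k,\ell\leq D:=X^{\eta/2}$ coprime to $6$ with prime factors $\leq Y$ and $\lambda_1=\lambda'_1=1$, and using $\mathbb{1}[(n,P(Y))=1]\leq(\sum_{k\mid n}\lambda_k)^2$ on both $x$ and $z$, the quantity above is bounded by
\[
\sum_{k_1,k_2,\ell_1,\ell_2\leq D}\lambda_{k_1}\lambda_{k_2}\lambda'_{\ell_1}\lambda'_{\ell_2}\,\mathcal{N}([k_1,k_2],[\ell_1,\ell_2]),
\]
where $\mathcal{N}(k,\ell):=\#\{(x,y,z)\in T:k\mid x,\ \ell\mid z,\ 1\leq x,z\leq X\}$ is the count handled by Lemma~\ref{lem4.4}. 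For $\eta$ sufficiently small in terms of $\delta_1,\delta_1'$, the accumulated error is $\ll D^{4}X^{1-\delta_1}\ll X^{1-\delta_1/2}$, which is negligible.

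It then remains to bound the main term
\[
CX\sum_{k_1,k_2,\ell_1,\ell_2\leq D}\lambda_{k_1}\lambda_{k_2}\lambda'_{\ell_1}\lambda'_{\ell_2}\,\frac{\rho'([k_1,k_2][\ell_1,\ell_2])}{[k_1,k_2][\ell_1,\ell_2]}.
\]
For this I would use the identity $\rho'(k\ell)/(k\ell)=(\rho'(k)/k)(\rho'(\ell)/\ell)/\rho'(\gcd(k,\ell))$ for squarefree $k,\ell$ coprime to $6$, which follows from $\rho'(p)=\rho'(p^2)=2p/(p+1)$ when $\chi_{-3}(p)=1$ and $\rho'(p)=0$ otherwise. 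Since $\sum_{p\leq Y,\,\chi_{-3}(p)=1}\rho'(p)/p=\log\log Y+O(1)$, each one-dimensional sieve has dimension one and contributes a Selberg main term of order $1/\log Y$. Expanding the four-fold sum by $g:=\gcd([k_1,k_2],[\ell_1,\ell_2])$ and writing $1/\rho'(g)=\prod_{p\mid g}(p+1)/(2p)\leq 1$ via M\"obius inversion, the main term decouples into a product of two independent one-dimensional Selberg sums times a convergent sum over $g$, giving a bound of $\ll X/(\log Y)^2\ll_{\eta}X/(\log X)^2$. The main technical obstacle is precisely this last decoupling step: because the Selberg weights are signed one cannot simply bound the coupling factor by $1$, and some care is needed to verify that the gcd expansion contributes only an $O(1)$ multiplicative constant, rather than a stray $\log X$.
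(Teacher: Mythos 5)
Your overall strategy is the same as the paper's: majorize the prime indicator in each of the two variables by a squared divisor sum of level a small power of $X$, expand, apply Lemma~\ref{lem4.4} to every pair of moduli (the error being acceptable because the weights are supported on $\ll X^{O(\eta)}$ moduli), and then win a factor $(\log X)^{-2}$ from the fourfold main term. The difference is in the choice of weights and, crucially, in how the main term is evaluated — and that is exactly where your plan has a genuine gap, which you yourself flag but do not close. After the gcd expansion $1/\rho'(g)=\sum_{d\mid g}h(d)$ with $h(p)=\f{1-p}{2p}$, your main term becomes $CX\sum_d h(d)Q_dQ_d'$, where $Q_d:=\sum_{d\mid[k_1,k_2]}\lambda_{k_1}\lambda_{k_2}\rho'([k_1,k_2])/[k_1,k_2]$ is a \emph{restricted} Selberg quadratic form. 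Because the $\lambda$'s are signed, the factor $1/\rho'(\gcd)\leq 1$ cannot be discarded termwise, and the crude bound $|\lambda_k|\leq 1$ gives $|Q_d|\ll c^{\omega(d)}\f{\rho'(d)}{d}(\log X)^{2}$-type estimates, which destroy the very $(\log)^2$ saving you need. What is required is an asymptotic (or at least an upper bound of the shape $|Q_d|\ll c^{\omega(d)}\f{\rho'(d)}{d}\cdot\f{1}{\log Y}$, uniform in $d$) for these restricted forms with the optimized Selberg weights; this is doable by rediagonalizing the quadratic form under the constraint $d\mid[k_1,k_2]$, but it is a genuine computation of the same order of difficulty as the rest of the proof, and it is missing from your plan.

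The paper sidesteps this issue by not using the optimized Selberg weights at all: it takes the smooth enveloping-sieve weights $\nu(n)=\bigl(\sum_{k\mid n}\mu(k)\phi(\log k/\log Y)\bigr)^2$ (following Section~2 of Tao's paper cited as~\cite{Tao2006}), for which $\nu(p)=1$ for $p>Y$ and $\nu\geq 0$, so one may bound the count by $\sum_{n,m\leq X,\ 2,3\nmid nm}\nu(n)\nu(m)1_{\square}(4nm-3)$ exactly as you do. The gain is that after Fourier inversion of $\phi$ the fourfold sum collapses to an Euler product, which is identified as a ratio of values of the single Dirichlet series $L(s)=\sum_n\mu(n)\rho'(n)n^{-s}=D(s)/(\zeta(s)L(s,\chi_{-3}))$; the two denominator factors in~\eqref{eq4.7} are precisely your coupling factor $1/\rho'(\gcd)$, handled automatically and exactly, and the simple zero of $L$ at $s=1$ produces the $(\log Y)^{-2}$. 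So either carry out the uniform evaluation of the restricted forms $Q_d$, or replace your weights by the smooth ones, after which your outline becomes the paper's proof.
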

\begin{corollary}\label{cor4.6}
  We have
  \[
\#\left\{n\leq N: n^2+n+1=q_1q_2\text{ with }q_1<q_2\text{ and }\f{N}{(\log{N})^{\f{1}{2}}}<q_1\leq N\right\}\ll\f{N}{(\log{N})^{\f{3}{2}}}.
  \]
\end{corollary}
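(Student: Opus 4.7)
The plan is to reduce the corollary to Lemma~\ref{lem4.5} by a direct substitution. Given $n\leq N$ with $n^2+n+1=q_1q_2$, multiplying by $4$ yields the identity
\[
(2n+1)^2=4q_1q_2-3,
\]
so every $n$ counted on the left-hand side of Corollary~\ref{cor4.6} produces a pair of primes $(q_1,q_2)$ with $q_1<q_2$ such that $4q_1q_2-3$ is a perfect square. The map $n\mapsto (q_1,q_2)$ is injective, since $q_1q_2=n^2+n+1$ recovers $n$ uniquely among positive integers.

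Next I would verify that both $q_1$ and $q_2$ lie in an interval of the form $[X^{\f{1}{2}},X]$ for a convenient choice of $X$. Set $X:=2N(\log{N})^{\f{1}{2}}$. The lower bound $q_1>\f{N}{(\log{N})^{\f{1}{2}}}$ gives
\[
q_1>\f{N}{(\log{N})^{\f{1}{2}}}\geq\sqrt{2N}\,(\log{N})^{\f{1}{4}}=X^{\f{1}{2}}
\]
for all sufficiently large $N$, and combined with $q_1q_2=n^2+n+1\leq 2N^2$ we obtain
\[
q_2\leq\f{2N^2}{q_1}\leq 2N(\log{N})^{\f{1}{2}}=X.
\]
Since $q_2>q_1>X^{\f{1}{2}}$ and $q_1<q_2\leq X$, the pair $(q_1,q_2)$ lies in $[X^{\f{1}{2}},X]^2$.

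Applying Lemma~\ref{lem4.5} with this choice of $X$ and using $\log{X}=\log{N}+O(\log_2{N})\sim\log{N}$ then gives
\[
\#\left\{n\leq N: n^2+n+1=q_1q_2\text{ with }q_1<q_2\text{ and }\f{N}{(\log{N})^{\f{1}{2}}}<q_1\leq N\right\}\ll\f{X}{(\log{X})^2}\ll\f{N}{(\log{N})^{\f{3}{2}}},
\]
as desired. The only step requiring care is the verification that the lower cutoff $q_1>\f{N}{(\log{N})^{\f{1}{2}}}$ is just strong enough to force $q_1\geq X^{\f{1}{2}}$; this is precisely why this particular cutoff was chosen in the decomposition~\eqref{eq4.1}--\eqref{eq4.3}, and there is no serious obstacle beyond bookkeeping once Lemma~\ref{lem4.5} is in hand.
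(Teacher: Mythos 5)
Your proof is correct and is essentially the paper's argument: the paper also deduces the corollary from Lemma~\ref{lem4.5} via the identity $(2n+1)^2+3=4q_1q_2$, taking $X=N(\log{N})^{\f{1}{2}}$ where you take $X=2N(\log{N})^{\f{1}{2}}$, with the same verification that $q_1>X^{\f{1}{2}}$ and $q_2\leq X$ and the same $\log X\sim\log N$ bookkeeping. The only difference is your slightly more careful factor of $2$ and the explicit injectivity remark, which are harmless refinements of the same route.
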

\begin{proof}
  This is an immediate consequence of Lemma~\ref{lem4.5} with $X=N(\log{N})^{1/2}$. Indeed, if $n^2+n+1=p_1p_2$, then, by completing the square and multiplying through by $4$, we have $(2n+1)^2+3=4p_1p_2$. Thus, each $n$ for which $n^2+n+1$ is the product of two primes corresponds to a pair of primes $(p_1,p_2)$ for which $4p_1p_2-3$ is a perfect square. If $n\leq N$ and $p_1\geq \f{N}{(\log{N})^{1/2}}$, then $p_1<p_2\leq N(\log{N})^{1/2}$, and so Lemma~\ref{lem4.5} applies.
\end{proof}

\begin{proof}[Proof of Lemma~\ref{lem4.5}]
This is a straightforward application of the enveloping sieve, and our argument will be closely modeled after those given in Section~2 of~\cite{Tao2006}.

Set $\gamma:=\min\left(\delta_1',\f{\delta_1}{8}\right)$ and $Y:=X^{\gamma}$, with $\delta_1$ and $\delta_1'$ as in Lemma~\ref{lem4.4}. Fix a smooth function $\phi:\R\to\R$ supported on $[-1,1]$ with $\phi(0)=1$ and $\int_0^1|\phi'(t)|^2dt=1$, and set
  \[
\nu(n):=\left(\sum_{k\mid n}\mu(k)\phi\left(\f{\log{k}}{\log{Y}}\right)\right)^2.
  \]
  Note that $\nu(p)=1$ whenever $p> Y$ is prime. So, since $\nu$ is nonnegative, the left-hand side of~\eqref{eq4.5} is bounded above by
  \begin{equation}\label{eq4.6}
\sum_{\substack{n,m\leq X\\ 2,3\nmid nm}}\nu(n)\nu(m)1_{\square}(4nm-3),
  \end{equation}
  where $1_{\square}$ denotes the indicator function of the squares. Expanding the definition of $\nu$ and applying Lemma~\ref{lem4.4}, we get that~\eqref{eq4.6} equals $\f{C^2}{4}$ times
\begin{align*}
  &X\sum_{\substack{k_1,k_2,\ell_1,\ell_2 \\ 2,3\nmid k_1k_2\ell_1\ell_2}}\f{\mu(k_1)\mu(k_2)\mu(\ell_1)\mu(\ell_2)}{[k_1,k_2][\ell_1,\ell_2]}\rho'([k_1,k_2][\ell_1,\ell_2])\phi\left(\f{\log{k_1}}{\log{Y}}\right)\phi\left(\f{\log{k_2}}{\log{Y}}\right)\phi\left(\f{\log{\ell_1}}{\log{Y}}\right)\phi\left(\f{\log{\ell_2}}{\log{Y}}\right)\\
  &+O\left(X^{1-\delta_1}\sum_{k_1,k_2,\ell_1,\ell_2}\left|\phi\left(\f{\log{k_1}}{\log{Y}}\right)\phi\left(\f{\log{k_2}}{\log{Y}}\right)\phi\left(\f{\log{\ell_1}}{\log{Y}}\right)\phi\left(\f{\log{\ell_2}}{\log{Y}}\right)\right|\right).
\end{align*}
Note that, since $\phi$ is supported on $[-1,1]$, the above sums over $k_1,k_2,\ell_1,$ and $\ell_2$ run over at most $Y^{4}\leq X^{\f{\delta_1}{2}}$ quadruples of integers. Thus, the error term is $\ll_{\phi}X^{1-\f{\delta_1}{2}}$.

We now focus on the main term. As in the proof of Proposition~2.1 in~\cite{Tao2006}, we apply Fourier inversion to write
\[
e^t\phi(t)=\int_{-\infty}^\infty\psi(u)e^{-itu}du
\]
for $\psi$ rapidly decaying. It then follows, by the rapid decay of $\psi$, that
\[
\phi\left(\f{\log{k}}{\log{Y}}\right)=\int_{|u|\leq(\log{Y})^{1/2}}\f{\psi(u)}{k^{\f{1+iu}{\log{Y}}}}du+O\left(\f{k^{-\f{1}{\log{Y}}}}{(\log{Y})^{10}}\right)
\]
for any $k\geq 1$, so that the main term of our expression for~\eqref{eq4.6} equals
\begin{align*}
\f{C^2}{4}X\iiiint\displaylimits_{|u_i|\leq(\log{Y})^{1/2}}\prod_{i=1}^4\psi(u_i)\left[\sum_{\substack{k_1,k_2,\ell_1,\ell_2 \\ 2,3\nmid k_1k_2\ell_1\ell_2}}\f{\mu(k_1)\mu(k_2)\mu(\ell_1)\mu(\ell_2)\rho'([k_1,k_2][\ell_1,\ell_2])}{[k_1,k_2][\ell_1,\ell_2]k_1^{\f{1+iu_1}{\log{Y}}}k_2^{\f{1+iu_2}{\log{Y}}}\ell_1^{\f{1+iu_3}{\log{Y}}}\ell_2^{\f{1+iu_4}{\log{Y}}}}\right]du_1du_2du_3du_4,
\end{align*}
plus an error term that is $O(\f{X}{(\log{N})^6})$.

By Hensel's lemma, $\rho'$ evaluated at any integer $k$ with $2,3\nmid k$ equals $\rho'$ evaluated at the squarefree part of $k$. Thus, the quantity inside of the brackets above can be expressed as the Euler product
\begin{align*}
  \prod_{p>3}\bigg(1-&\f{\rho'(p)}{p^{1+\f{(1+iu_1)}{\log{Y}}}}-\f{\rho'(p)}{p^{1+\f{(1+iu_2)}{\log{Y}}}}-\f{\rho'(p)}{p^{1+\f{(1+iu_3)}{\log{Y}}}}-\f{\rho'(p)}{p^{1+\f{(1+iu_4)}{\log{Y}}}}+\f{\rho'(p)}{p^{1+\f{2+iu_1+iu_2}{\log{Y}}}}+\f{\rho'(p)}{p^{1+\f{2+iu_3+iu_4}{\log{Y}}}}\\
  +&\f{\rho'(p)}{p^{2+\f{2+iu_1+iu_3}{\log{Y}}}}+\f{\rho'(p)}{p^{2+\f{2+iu_1+iu_4}{\log{Y}}}}+\f{\rho'(p)}{p^{2+\f{2+iu_2+iu_3}{\log{Y}}}}+\f{\rho'(p)}{p^{2+\f{2+iu_2+iu_4}{\log{Y}}}}-\f{\rho'(p)}{p^{2+\f{3+iu_1+iu_2+iu_3}{\log{Y}}}} \\
-&\f{\rho'(p)}{p^{2+\f{3+iu_1+iu_2+iu_4}{\log{Y}}}}-\f{\rho'(p)}{p^{2+\f{3+iu_1+iu_3+iu_4}{\log{Y}}}}-\f{\rho'(p)}{p^{2+\f{3+iu_2+iu_3+iu_4}{\log{Y}}}}+\f{\rho'(p)}{p^{2+\f{4+iu_1+iu_2+iu_3+iu_4}{\log{Y}}}}\bigg).
\end{align*}
Letting $L(s):=\sum_{n}\f{\mu(n)\rho'(n)}{n^s}$ denote the Dirichlet series for $\mu\cdot\rho'$, this Euler product equals
\begin{equation}\label{eq4.7}
\f{L(1+\f{(1+iu_1)}{\log{Y}})L(1+\f{(1+iu_2)}{\log{Y}})L(1+\f{(1+iu_3)}{\log{Y}})L(1+\f{(1+iu_4)}{\log{Y}})}{L(1+\f{2+iu_1+iu_2}{\log{Y}})L(1+\f{2+iu_3+iu_4}{\log{Y}})}\left(C'+O\left(\f{1+\sum_{i=1}^4|u_i|}{\log{Y}}\right)\right)
\end{equation}
for some absolute constant $C'>0$.

Note that, by the definition of $\rho'$, we have
\[
L(s)=\prod_{p>3}\left(1-\f{1+\chi_{-3}(p)}{p^s+p^{s-1}}\right)=\prod_{p>3}\left(1-\f{1+\chi_{-3}(p)}{p^s}-\f{1+\chi_{-3}(p)}{p^{2s+1}+p^s}\right),
\]
when $\Re(s)>1$, so that
\[
L(s)=\f{D(s)}{\zeta(s)L(s,\chi_{-3})}
\]
for some function $D(s)$ that is holomorphic in the region $\Re(s)>1/2$ and nonvanishing in a neighborhood of $s=1$. Thus, $L(s)=c_{\rho'}(s-1)+O(|s-1|^2)$ for some nonzero constant $c_{\rho'}$, since $\zeta(s)=\f{1}{s-1}+O(1)$ and $D(s)$ and $L(s,\chi_{-3})$ are holomorphic and nonvanishing in a neighborhood of $s=1$. As a consequence, when $|u_i|\leq(\log{Y})^{1/2}$ for each $i=1,2,3,4$, the quantity~\eqref{eq4.7} equals
\[
\f{c_{\rho'}^2}{(\log{Y})^2}\cdot\f{(1+iu_1)(1+iu_2)(1+iu_3)(1+iu_4)}{(2+iu_1+iu_2)(2+iu_3+iu_4)}\left(C'+O\left(\f{1}{(\log{Y})^{1/2}}\right)\right).
\]

The main term of our expression for~\eqref{eq4.6} thus equals
\begin{align*}
  &\f{(Cc_{\rho'})^2C'}{4}\cdot\f{X}{(\log{Y})^2}\iiiint\displaylimits_{|u_i|\leq(\log{Y})^{1/2}}\prod_{i=1}^4\psi(u_i)\f{(1+iu_1)(1+iu_2)(1+iu_3)(1+iu_4)}{(2+iu_1+iu_2)(2+iu_3+iu_4)}du_1du_2du_3du_4 \\
  &+O\left(\f{X}{(\log{Y})^{5/2}}\iiiint\displaylimits_{|u_i|\leq(\log{Y})^{1/2}}\prod_{i=1}^4|\psi(u_i)|\f{(1+|u_1|)(1+|u_2|)(1+|u_3|)(1+|u_4|)}{(2+|u_1|+|u_2|)(2+|u_3|+|u_4|)}du_1du_2du_3du_4\right).
\end{align*}
The error term is $\ll\f{X}{(\log Y)^{5/2-\ve}}$ since $\psi$ is rapidly decaying, and, by extending the integral in the main term to all of $\R^4$ using the rapid decay of $\psi$, the main term equals
\[
\f{(Cc_{\rho'})^2C'}{4}\cdot\f{X}{(\log{Y})^2}\left(\int_{-\infty}^{\infty}\int_{-\infty}^{\infty}\psi(u_1)\psi(u_2)\f{(1+iu_1)(1+iu_2)}{(2+iu_1+iu_2)}du_1du_2\right)^2,
\]
plus an error that is $O(\f{X}{(\log{Y})^{10}})$, say. The above quantity equals $\f{(Cc_{\rho'})^2C'}{4}\cdot\f{X}{(\log{Y})^2}$, since the double integral equals $1$ (see the manipulation at the end of the proof of Proposition~2.2 of~\cite{Tao2006}). The conclusion of the lemma now follows from our choice of $Y$.
\end{proof}

Proposition~\ref{prop4.1} is now an immediate consequence of Lemmas~\ref{lem4.2} and~\ref{lem4.3} and Corollary~\ref{cor4.6}.
\section{$n^2+n+1$ is prime}\label{sec5}
In this section, we bound the number of $n\in S(N)$ such that $n^2+n+1$ is prime:
\begin{proposition}\label{prop5.1}
  We have
  \[
\#\{n\in S(N):n^2+n+1\text{ is prime}\}\ll\f{N}{\log{N}\exp(C\f{\log_5{N}}{\log_6{N}})}
  \]
  for some absolute constant $C>0$.
\end{proposition}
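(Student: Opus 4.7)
The plan is to split the $n\in S(N)$ with $n^2+n+1$ prime according to whether $q:=n^2+n+1$ is congruent to $1$ or $3$ modulo $4$, and treat these cases separately, as suggested by the outline at the end of Section~\ref{sec2}.

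The case $q\equiv 3\pmod{4}$ is short. After discarding the $O(N^{1/2})$ perfect squares, Theorem~\ref{thm2.1} says that every prime $p\mid n$ satisfies $\left(\f{p}{q}\right)=1$. Since $q\equiv 1\pmod{n}$ we have $\left(\f{q}{p}\right)=1$ for every odd $p\mid n$, and quadratic reciprocity together with the fact that $(q-1)/2$ is odd forces $p\equiv 1\pmod{4}$ for every odd prime $p\mid n$. Combining an upper-bound sieve for the primality of $n^2+n+1$ (which contributes a $(\log N)^{-1}$ factor) with the dimension-$\f{1}{2}$ sieve restricting $n$ to have no prime factor $\equiv 3\pmod{4}$ (which contributes a $(\log N)^{-1/2}$ factor), by the now-familiar Euler product manipulations of Section~\ref{sec3}, yields a bound $O(N/(\log N)^{3/2})$, which is amply admissible.

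For $q\equiv 1\pmod{4}$, I would follow the template of Lemma~\ref{lem4.3}, setting $k=\lfloor\log_5 N/(3\log_6 N)\rfloor$ and splitting the remaining $n$ into: (a) those with $\omega(n)<k$, (b) those with $p_k(n)>\log_3 N$, and (c) those whose $k$ smallest prime factors are some fixed tuple $p_1<\dots<p_k\leq\log_3 N$. The contribution of (a) and (b) is $o(N/\log N)$ by the same sieve arguments as in the proof of Lemma~\ref{lem4.3}, using only the primality of $n^2+n+1$. For a tuple $(p_1,\dots,p_k)$ in case (c), Lemma~\ref{lem2.2} forces each $p_i$ to be a quartic residue modulo $q$.

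I would then invoke the explicit form of the quartic reciprocity law: writing $q=x^2+4y^2$ uniquely (after fixing a sign normalization), the condition that an odd prime $\ell$ be a quartic residue modulo $q$ is equivalent to $y$ lying in a prescribed union of residue classes modulo $\ell$, with an analogous congruence on $y$ modulo $8$ handling the prime $2$. Multiplying $q=x^2+4y^2$ by $4$ and using $4q=(2n+1)^2+3$ gives the hyperboloid equation $4x^2+16y^2-z^2=3$ with $z:=2n+1$, and each admissible $n$ corresponds to a lattice point $(x,y,z)$ on this hyperboloid with $1\leq z\leq 2N+1$ odd and $y$ satisfying a specified set of congruences modulo $p_1\cdots p_k$. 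At this point I would combine the lattice point counting result from Section~\ref{sec7}, which gives an asymptotic $\asymp X/(m_1m_2)$ for the number of such lattice points with congruences modulo $m_1$ on $y$ and modulo $m_2$ on $z$ (with a power-saving error term), with a Selberg sieve in the $z$ variable to enforce that $(z^2+3)/4$ is prime. The quartic residue condition cuts the admissible residue classes of $y$ modulo each $p_i$ by a factor of $\f{1}{2}$ on average, so summing over all tuples $(p_1,\dots,p_k)$ exactly as in the proof of Lemma~\ref{lem4.3} yields a bound of the shape $(N/\log N)\cdot 2^{-k}\log k$, which, for our choice of $k$, is $O(N/(\log N\exp(C\log_5 N/\log_6 N)))$. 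The principal obstacle is the lattice point count on the hyperboloid $4x^2+16y^2-z^2=3$, which, as the author remarks, is substantially more delicate than the analogous Lemma~\ref{lem4.4}, because the extra variable and the finer sign conventions coming from quartic reciprocity complicate the adaptation of Hooley's argument considerably.
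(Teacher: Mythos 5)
Your proposal takes essentially the same route as the paper: the case $q\equiv 3\pmod{4}$ is Lemma~\ref{lem5.2} (quadratic reciprocity plus Theorem~\ref{thm2.1} and an upper bound sieve giving $O(N/(\log N)^{3/2})$), and the case $q\equiv 1\pmod{4}$ is Lemma~\ref{lem5.5}, which carries out exactly your plan—splitting by the $k=\lfloor\log_5N/(3\log_6N)\rfloor$ smallest prime factors, applying Lemma~\ref{lem2.2} and quartic reciprocity (Theorem~\ref{thm5.3} with $\sigma=1$) to turn the quartic residue condition into congruences on $y$ in $q=x^2+4y^2$, and then combining the hyperboloid count of Lemma~\ref{lem5.4} with the Selberg sieve before summing over tuples to get the $2^{-k}\log k$ saving. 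The only cosmetic difference is that the sign in the quartic reciprocity step is handled by splitting $n$ modulo $8$ (i.e.\ a congruence on $z=2n+1$) rather than by a congruence on $y$ modulo $8$.
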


The number of $n\in S(N)$ such that $n^2+n+1$ is a prime that is congruent to $3$ modulo $4$ is easy to bound.
\begin{lemma}\label{lem5.2}
  We have
  \[
\#\{n\in S(N):n^2+n+1\text{ is prime and congruent to }3\Mod{4}\}\ll\f{N}{(\log{N})^{\f{3}{2}}}.
  \]
\end{lemma}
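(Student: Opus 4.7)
The plan is to combine Theorem~\ref{thm2.1} with quadratic reciprocity to impose a strong restriction on the odd prime factors of $n$, and then to apply a standard upper bound sieve.

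First, I would show that if $n\in S(N)$ is not a perfect square, $q:=n^2+n+1$ is prime, and $q\equiv 3\Mod{4}$, then every odd prime factor $p$ of $n$ must satisfy $p\equiv 1\Mod{4}$. Indeed, since $q\equiv 1\Mod{n}$, for every odd prime $p\mid n$ one has $\left(\f{q}{p}\right)=\left(\f{1}{p}\right)=1$, so quadratic reciprocity gives
\[
\left(\f{p}{q}\right)=(-1)^{\f{p-1}{2}\cdot\f{q-1}{2}}=(-1)^{\f{p-1}{2}},
\]
where we have used that $\f{q-1}{2}$ is odd. Theorem~\ref{thm2.1} forces the left-hand side to be $+1$, which gives $p\equiv 1\Mod{4}$. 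The perfect squares $n\leq N$ number only $O(\sqrt{N})$ and can be discarded at the outset.

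It then suffices to upper bound the number of $n\leq N$ satisfying both of the conditions (i) every odd prime divisor of $n$ is congruent to $1\Mod{4}$, and (ii) $n^2+n+1$ is prime. I would do this via an upper bound sieve at level $N^{\gamma}$ for a suitable small $\gamma>0$. For each prime $\ell\leq N^{\gamma}$, the number of residue classes mod $\ell$ being sieved out is $\omega(\ell)=\omega_1(\ell)+\omega_2(\ell)$, where $\omega_1(\ell)=1$ when $\ell$ is an odd prime $\equiv 3\Mod{4}$ (and $0$ otherwise), and $\omega_2(\ell)$ is the number of roots of $x^2+x+1$ modulo $\ell$, equal to $2$ when $\ell\equiv 1\Mod{3}$ and $\ell\neq 3$, equal to $1$ when $\ell=3$, and equal to $0$ otherwise. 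Classifying primes by their residue mod $12$ and applying the prime number theorem in arithmetic progressions then yields
\[
\sum_{\ell\leq z}\f{\omega(\ell)\log\ell}{\ell}=\f{3}{2}\log z+O(1),
\]
so this is a sieve problem of dimension $\f{3}{2}$.

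A standard upper bound sieve (e.g.\ the Selberg sieve) of dimension $\f{3}{2}$ at level $N^{\gamma}$ then gives the bound $\ll\f{N}{(\log N)^{3/2}}$, as required. I do not anticipate any serious obstacle: the only step specific to this problem is the quadratic reciprocity calculation, while the sieve step is entirely routine.
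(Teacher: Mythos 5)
Your argument is correct and is essentially the paper's own proof: both combine Theorem~\ref{thm2.1} with quadratic reciprocity (using $n^2+n+1\equiv 1\Mod{n}$ and $\f{q-1}{2}$ odd) to force every odd prime divisor of $n$ to be $\equiv 1\Mod{4}$, discard the $O(N^{1/2})$ perfect squares, and then apply an upper bound sieve of dimension $\f{3}{2}$. The only difference is that you spell out the sieve density computation, which the paper leaves implicit.
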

\begin{proof}
  By quadratic reciprocity, we have that if $p\mid n$ is odd and $n^2+n+1\equiv 3\Mod{4}$ is prime, then $\left(\f{p}{n^2+n+1}\right)=(-1)^{\f{p-1}{2}}\left(\f{n^2+n+1}{p}\right)=(-1)^{\f{p-1}{2}}$. Thus, by Theorem~\ref{thm2.1}, we have that the number of $n\in S(N)$ such that $n^2+n+1$ is a prime that is congruent to $3$ modulo $4$ is at most
  \[
\#\{n\leq N:\Omega(n^2+n+1)=1\text{ and }2\neq p\mid n\implies p\equiv 1\Mod{4}\}+O(N^{\f{1}{2}}).
  \]
The first term above is $\ll\f{N}{(\log{N})^{3/2}}$ by an upper bound sieve.
\end{proof}

It now remains to deal with $n\in S(N)$ such that $n^2+n+1$ is a prime that is congruent to $1$ modulo $4$. As outlined in Section~\ref{sec2}, to finish our proof of Theorem~\ref{thm1.2}, we will combine Lemma~\ref{lem2.2} with the quartic reciprocity law to reduce the problem of bounding the number of such $n$ to that of bounding the number of certain representations of prime values of $n^2+n+1$ by the quadratic form $x^2+y^2$. This can be done using the Selberg sieve as long as we have a sufficiently accurate count for the number of lattice points on the hyperboloid $4x^2+16y^2-z^2=3$ with $y$ and $z$ satisfying a variety of congruence restrictions and $|z|\leq 2N+1$.

We first state the quartic reciprocity law and the required lattice point counting lemma, whose proof we defer to Section~\ref{sec7}.

\begin{theorem}[Quartic reciprocity]\label{thm5.3}
  Let $q\equiv 1\Mod{4}$ and $p$ be primes satisfying $\left(\f{q}{p}\right)=1$  and let $\sigma$ be a root of the congruence $q\equiv\sigma^2\Mod{p}$. Assume that $q=x^2+y^2$ with $2\mid y$. Then
  \[
\left(\f{p^*}{q}\right)_4=\left(\f{\sigma(\sigma+y)}{p}\right).
  \]
\end{theorem}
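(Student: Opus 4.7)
The plan is to lift the computation to $\Z[i]$, factor $q=\pi\bar\pi$ with $\pi=x+iy$, and express both sides in terms of the biquadratic residue symbol. Since $p^*$ is a rational integer and $\Z[i]/\pi\cong\Z/q\Z$, we have $\left(\f{p^*}{q}\right)_4=\left(\f{p^*}{\pi}\right)_4$, and we may replace $\pi$ by its primary associate (the right-hand side of the claim is invariant under $y\mapsto-y$ and $\sigma\mapsto-\sigma$). The engine of the argument is the identity
\[
(x+i(\sigma+y))^2\equiv 2i\pi(\sigma+y)\Mod{p}\quad\text{in}\ \Z[i]/p,
\]
which is immediate from $\sigma^2\equiv q=x^2+y^2\Mod{p}$, together with its consequence $N(x+i(\sigma+y))\equiv 2\sigma(\sigma+y)\Mod{p}$.

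When $p\equiv 3\Mod 4$, the integer $p^*=-p$ is primary and $p$ is inert in $\Z[i]$. Because $(p^2-1)/4$ is even for such $p$, the Gaussian biquadratic reciprocity law (Ireland--Rosen, Ch.~9) gives $\left(\f{p^*}{\pi}\right)_4=\left(\f{\pi}{p}\right)_4\equiv\pi^{(p^2-1)/4}\Mod{p}$. To evaluate the right-hand side in $\F_{p^2}$, I would raise the key identity to the $(p-1)/2$-th power, using $\alpha^{p-1}=\bar\alpha/\alpha$ and $i^{p-1}=-1$ in $\F_{p^2}^\times$, together with the simplifications $\pi(x-i(\sigma+y))\equiv\sigma(\sigma+y-ix)\Mod{p}$ and $(\sigma+y-ix)/(x+i(\sigma+y))=-i$. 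The resulting factor of $\left(\f{2}{p}\right)$ and a power of $i$ then conspire to cancel (the identity $-i\cdot i^{-(p-1)/2}=\left(\f{2}{p}\right)$ in $\F_{p^2}$ can be checked case by case on $p\Mod 8$), producing $\pi^{(p+1)/2}\equiv\sigma\left(\f{\sigma+y}{p}\right)\Mod{p}$, and Euler's criterion then yields $\pi^{(p^2-1)/4}\equiv\left(\f{\sigma(\sigma+y)}{p}\right)\Mod{p}$.

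When $p\equiv 1\Mod 4$, $p=p^*$ splits as $\pi_p\bar\pi_p$ in $\Z[i]$ with $\pi_p$ primary. Reciprocity applied to $(\pi,\pi_p)$ and $(\pi,\bar\pi_p)$ contributes two signs that square away, giving $\left(\f{p}{\pi}\right)_4=\left(\f{\pi}{\pi_p}\right)_4\cdot\left(\f{\pi}{\bar\pi_p}\right)_4$ in $\mu_4$. Let $i_0\in\F_p$ be the image of $i$ under $\Z[i]/\pi_p\cong\F_p$ (so $i\mapsto-i_0$ under $\Z[i]/\bar\pi_p\cong\F_p$), and set $\pi_\pm=x\pm i_0 y$. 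Tracking carefully that $\mu_4\cap\F_p$ embeds in $\mu_4\subset\Z[i]$ via the opposite conventions $i_0\mapsto i$ and $i_0\mapsto-i$ in the two quotients, the product collapses to the real element $(\pi_+/\pi_-)^{(p-1)/4}$; using $\pi_+\pi_-\equiv\sigma^2\Mod{p}$ and Euler's criterion, this in turn equals $\left(\f{x+i_0 y}{p}\right)\left(\f{\sigma}{p}\right)$. Applying the key identity in $\F_p$ (valid since now $i_0\in\F_p$) and taking Legendre symbols gives $\left(\f{2i_0}{p}\right)\left(\f{x+i_0 y}{p}\right)\left(\f{\sigma+y}{p}\right)=1$, and since $\left(\f{2i_0}{p}\right)=1$ for all $p\equiv 1\Mod 4$ (a direct check after splitting on $p\Mod 8$), the claimed formula follows.

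The principal obstacle is the bookkeeping in the split case: the two symbols $\left(\f{\pi}{\pi_p}\right)_4$ and $\left(\f{\pi}{\bar\pi_p}\right)_4$ naturally take values in $\mu_4\subset\Z[i]$, but their explicit evaluations live in two different copies of $\F_p$, in which $i$ specializes to opposite square roots of $-1$. Getting this identification right is what turns the product of two fourth-root-of-unity-valued symbols into the cleaner real ratio $(\pi_+/\pi_-)^{(p-1)/4}$; once that bookkeeping is settled, the rest is mechanical. The degenerate case $p\mid x$ (equivalently $\sigma\equiv\pm y\Mod{p}$) falls outside the identity as stated and would have to be treated separately if it arises in the application.
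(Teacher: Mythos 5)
The paper does not prove this statement at all --- it is quoted directly from Lemmermeyer (Theorem 5.5 of the cited book) --- so there is no internal argument to compare with; your proposal is a self-contained derivation via biquadratic reciprocity in $\Z[i]$, and it is correct. I checked the main points: the key identity $(x+i(\sigma+y))^2\equiv 2i\pi(\sigma+y)\Mod{p}$ with $\pi=x+iy$ is exactly the relation $\sigma^2\equiv x^2+y^2\Mod{p}$, and it gives $N(x+i(\sigma+y))\equiv 2\sigma(\sigma+y)\Mod{p}$; in the inert case the eighth-root-of-unity bookkeeping works in both classes $p\equiv 3,7\Mod{8}$, yielding $\pi^{(p+1)/2}\equiv\sigma\left(\f{\sigma+y}{p}\right)\Mod{p}$ and hence $\pi^{(p^2-1)/4}\equiv\left(\f{\sigma(\sigma+y)}{p}\right)\Mod{p}$, while the reciprocity step is sign-free because $8\mid p^2-1$; in the split case the two conjugate symbols combine, after the $i\mapsto i_0$ versus $i\mapsto -i_0$ identifications, into $(\pi_+/\pi_-)^{(p-1)/4}=\left(\f{x+i_0y}{p}\right)\left(\f{\sigma}{p}\right)$, and since this is $\pm1$ and $1,-1,i,-i$ stay distinct modulo $\pi_p$, reading the product off in $\F_p$ is legitimate; finally $\left(\f{2i_0}{p}\right)=1$ for every $p\equiv 1\Mod{4}$, as you say. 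The reduction to a primary $\pi$ via the simultaneous flip $(x,y,\sigma)\mapsto(-x,-y,-\sigma)$ is harmless because it fixes $\sigma(\sigma+y)$. What your route buys is independence from the external reference, at the modest cost of invoking the reciprocity law in $\Z[i]$. Your closing caveat is accurate and worth keeping: if $p\mid \sigma+y$ (equivalently $p\mid x$ with the unlucky sign of $\sigma$) the right-hand side vanishes while the left-hand side is $\pm1$, so the statement implicitly requires $p\nmid\sigma+y$; your argument covers precisely the nondegenerate cases, which is also what the paper's remark that $\left(\f{\sigma(\sigma+y)}{p}\right)=\left(\f{\sigma(\sigma-y)}{p}\right)$, and its application with $\sigma=1$, tacitly assume.
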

(See Theorem~5.5 of~\cite{Lemmermeyer2000}.) As it will be relevant in the proof of Lemma~\ref{lem5.5} below, note that in the situation of Theorem~\ref{thm5.3}, we must have $\left(\f{\sigma(\sigma+y)}{p}\right)=\left(\f{\sigma(\sigma-y)}{p}\right)$.
\begin{lemma}\label{lem5.4}
  There exist absolute constants $C>0$ and $0<\delta_2,\delta_2'<1$ such that the following holds. Let $p_1,\dots,p_k\leq\log_3{N}$ and $q_1,\dots,q_{m}\leq\log_3{N}$ be disjoint collections of primes. For all odd squarefree $\ell\leq X^{\delta_2'}$ with $p_1\cdots p_kq_1\cdots q_m\mid \ell$ and for all congruence classes $a\Mod{p_1\cdots p_k}$ and $b\Mod{8\ell}$ satisfying
\begin{enumerate}
\item $\left(\f{1+2a}{p_i}\right)=\left(\f{1-2a}{p_i}\right)$ for each $i=1,\dots,k$,
\item $b\equiv 1\Mod{2p_i}$ for each $i=1,\dots,k$,
\item $b\equiv 1\Mod{2q_j}$ for each $j=1,\dots,m$, and
\item $b^2\equiv -3\Mod{r}$, where $r:=\f{\ell}{p_1\cdots p_kq_1\cdots q_m}$,
\end{enumerate}
  we have
  \begin{align*}
    \#\{(x,y,z)\in\Z^3:&4x^2+16y^2-z^2=3,\ |z|\leq X,\ y\equiv a\Mod{p_1\cdots p_k},\text{ and }z\equiv b\Mod{8\ell}\} \\
    &=C2^k\prod_{i=1}^kw_1(a,p_i)\prod_{j=1}^mw_2(q_j)\prod_{p\mid r}w_3(p)\f{X}{(p_1\cdots p_k)^2q_1\cdots q_mr}+O(X^{1-\delta_2}),
  \end{align*}
  where
  \[
    w_1(a,p)=(1+\chi_{12}(p))^{\delta_{\f{1}{4}}(a\Mod{p})}\left(\f{1-\f{\chi_{12}(p)}{p}}{1-\f{1}{p^2}}\right),
  \]
  \[
w_2(p):=\left(1+\f{2\chi_{12}(p)-1}{p}\right)\left(\f{1-\f{\chi_{12}(p)}{p}}{1-\f{1}{p^2}}\right),
  \]
  and
  \[
    w_3(p):=(1+\chi_{-1}(p))\left(1+\f{2\chi_{12}(p)-1}{2p}\right)\left(\f{1-\f{\chi_{12}(p)}{p}}{1-\f{1}{p^2}}\right).
  \]
\end{lemma}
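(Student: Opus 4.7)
The plan is to adapt Hooley's lattice-point counting method from \cite{Hooley1963}, following the template used in the proof of Lemma~\ref{lem4.4} but with a substantially more delicate local analysis. Reducing $4x^2 + 16y^2 - z^2 = 3$ modulo $4$ shows that $z$ must be odd, so I would set $z = 2w + 1$. A direct computation yields the cleaner relation
\[
x^2 + 4y^2 = w^2 + w + 1.
\]
The left side is the norm $N_{\QQ(i)/\QQ}(x + 2iy)$, restricted to Gaussian integers with even imaginary part, while the right side is $N_{\QQ(\omega)/\QQ}(w - \omega)$ for $\omega = e^{2\pi i/3}$. So the count becomes: for each $w$ with $|2w + 1| \leq X$ and $2w+1$ in the specified residue class modulo $8\ell$, count ideals of $\Z[i]$ of norm $w^2 + w + 1$ whose generator $\alpha = x + 2iy$ satisfies $y \equiv a \Mod{p_1 \cdots p_k}$.

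The body of the argument then follows Hooley's hyperbola method. Factoring $\alpha\bar\alpha = w^2 + w + 1 = d_1 d_2$ in $\Z[i]$ and reversing the order of summation, I would fix the smaller factor $d := \min(d_1, d_2) \leq X^{1/2}$ and count, for each $d$, pairs $(w, \alpha)$ in which $w$ runs over residues modulo $d$ lifting to the relevant interval with $d \mid w^2 + w + 1$, and $\alpha$ is a representative of an ideal above $d$ in $\Z[i]$. The admissible residues $w \Mod{d}$ are indexed by roots of $T^2 + T + 1 \equiv 0 \Mod{d}$, whose number is governed by the splitting of rational primes in $\Z[\omega]$. Summing over $w$ in its arithmetic progression modulo $8\ell$ and combining with the Dirichlet hyperbola trick produces a main term of order $X$ together with an error saving a fixed power of $X$.

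The congruence $y \equiv a \Mod{p_i}$ would be detected by expanding it as a sum over additive characters of $\Z[i]/(p_i)$. The local structure of $\Z[i]$ at $p_i$ depends on $\chi_{-4}(p_i)$, while the condition that $p_i$ is simultaneously compatible with a root of $T^2+T+1$ introduces a factor depending on $\chi_{-3}(p_i)$. Their product $\chi_{12} = \chi_{-4}\chi_{-3}$ is precisely the character appearing in the weights $w_1$, $w_2$, $w_3$, and the factor $\delta_{1/4}(a\Mod{p_i})$ reflects the coincidence case when $a$ lies in the preimage of the admissible value of $4y^2 \Mod{p_i}$ under $y \mapsto 4y^2$. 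The principal character across all local factors yields the stated main term, and non-principal characters contribute to the error via Weil-type bounds on the resulting exponential sums over $\Z[i]/(d)$ and $\Z/(8\ell)\Z$.

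The main obstacle, and the reason for the substantial increase in complexity over Lemma~\ref{lem4.4}, will be the simultaneous handling of two distinct moduli (the modulus $p_1 \cdots p_k$ on $y$ and the modulus $8\ell$ on $z$, where $\ell$ already contains the $p_i$ and $q_j$), together with the delicate $2$-adic analysis forced by coupling the condition $z \equiv b \Mod{8}$ with the requirement that the imaginary part of $\alpha$ be even. Producing the clean multiplicative structure of the weights $w_1$, $w_2$, $w_3$ uniformly in $k$, $m$, and $\ell$ up to $X^{\delta_2'}$ will require propagating the exact local densities at every prime through the entire parametrisation, rather than settling for their ``generic'' values; in particular, separating the contributions of primes dividing $p_1\cdots p_k$, of primes dividing $q_1\cdots q_m$, and of the remaining primes dividing $r$, each with its own local shape, is where the genuine bookkeeping lies.
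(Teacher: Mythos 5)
Your opening reduction is fine: reducing modulo $4$ forces $z$ odd, and writing $z=2n+1$ turns the count into counting representations $x^2+4y^2=n^2+n+1$ with $|2n+1|\le X$ and the stated congruence conditions (the paper makes the same observation in spirit in Corollary~4.6). The genuine gap is the sentence asserting that the Dirichlet hyperbola trick ``produces a main term of order $X$ together with an error saving a fixed power of $X$.'' After divisor switching, for each modulus $d$ up to roughly $X$ you must count $n\le X$, lying in prescribed progressions modulo $8\ell$, with $n$ congruent modulo $d$ to a root of $T^2+T+1$ (equivalently to a root of $\nu^2\equiv-3$). The trivial error in each such count is $O(1)$ per root, and $\sum_{d\le X}\rho(d)\asymp X$, which is the same size as the main term; so the hyperbola trick alone gives no power saving at all. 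What is needed is cancellation in the resulting sawtooth/fractional-part sums over roots of quadratic congruences as $d$ varies --- exactly the Hooley/Duke--Friedlander--Iwaniec equidistribution input used for Lemma~\ref{lem4.4} --- and with your extra twists (progressions modulo $8\ell$, additive characters detecting $y\equiv a\Mod{p_i}$, uniformity in $\ell\le X^{\delta_2'}$) no off-the-shelf bound applies. This is the entire point of Section~\ref{sec7}: the paper proves such an estimate from scratch (Lemma~\ref{lem7.2}), by parametrizing the roots via the binary quadratic form $x^2-3y^2$ (Lemma~\ref{lem7.1}) and using Weil/Sali\'e bounds for Kloosterman sums together with a P\'olya--Vinogradov completion. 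Your appeal to ``Weil-type bounds on exponential sums over $\Z[i]/(d)$'' concerns complete sums for a fixed modulus and does not touch this difficulty, which lives in the average over the moduli $d$ up to essentially $X$.

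A second, smaller, unaddressed point: in your parametrization the condition $y\equiv a\Mod{p_1\cdots p_k}$ is a condition on the representation $\alpha=x+2iy$, which the ideal factorization determines only up to units and conjugation; making it explicit couples the divisor $d$, the chosen root $\nu\Mod{d}$, and the cofactor, so before any character expansion you need an explicit correspondence of the kind the paper records in Lemma~\ref{lem7.1}. The paper sidesteps this by the linear substitution $u=z-4y$, $v=z+4y$, which converts all congruence conditions into conditions on $u$ and $v$ separately and leads to roots of $4\nu^2\equiv 3\Mod{\ell_0 u}$ (the real quadratic form $x^2-3y^2$) rather than your imaginary-quadratic setup. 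Either route could in principle be made to work, but the proof must contain a twisted, power-saving equidistribution estimate for roots of quadratic congruences; your proposal neither supplies one nor identifies the need for it, and instead locates the main difficulty in the local density bookkeeping, which is the routine part.
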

Now we can bound the number of $n\in S(N)$ such that $n^2+n+1$ is a prime that is congruent to $1$ modulo $4$.
\begin{lemma}\label{lem5.5}
  We have
  \[
   \#\{n\in S(N):n^2+n+1\text{ is prime and congruent to }1\Mod{4}\}\ll\f{N}{\log{N}\exp(C\f{\log_5{N}}{\log_6{N}})}
  \]
  for some absolute constant $C>0$.
\end{lemma}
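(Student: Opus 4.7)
The plan is to follow the strategy sketched in Section~\ref{sec2}: split the $n$ according to their smallest $k$ prime factors, use Lemma~\ref{lem2.2} together with Theorem~\ref{thm5.3} to convert the quartic-residue conditions into congruence restrictions on $y$ in the representation $4(n^2+n+1) = (2n+1)^2 + 3 = 4x^2+16y^2$, and then apply the Selberg upper bound sieve using Lemma~\ref{lem5.4} to enforce primality of $n^2+n+1$.

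First I would split. Set $k := \lfloor \log_5{N}/(3\log_6{N}) \rfloor$. For each tuple $p_1 < \cdots < p_k$ of primes bounded by $\log_3{N}$, let $V_{p_1,\dots,p_k}(N)$ denote the set of $n \in S(N)$ for which $n^2+n+1$ is prime and $\equiv 1\Mod{4}$, $\omega(n) \geq k$, and $p_i(n) = p_i$ for each $i$. The contribution of $n \leq N$ with $n^2+n+1$ prime and either $\omega(n) < k$ or $p_k(n) > \log_3{N}$ is bounded by an upper bound sieve exactly as in the proof of Lemma~\ref{lem4.3}, giving $\ll N (C' \log_5{N})^k / (\log{N} \log_4{N})$, which is absorbed into the target bound for our choice of $k$. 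So it remains to estimate $\sum_{p_1 < \cdots < p_k \leq \log_3{N}} \# V_{p_1,\dots,p_k}(N)$.

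Next I would convert to a lattice point problem. For $n \in V_{p_1,\dots,p_k}(N)$, the prime $q := n^2+n+1 \equiv 1\Mod{4}$ admits a unique (up to signs) representation $q = X^2 + Y^2$ with $X, Y \geq 1$ and $2 \mid Y$. Setting $y := Y/2$ and $z := 2n+1$, the triple $(X, y, z)$ lies on the hyperboloid $4x^2 + 16y^2 - z^2 = 3$ with $z$ odd and $1 \leq z \leq 2N+1$. By Lemma~\ref{lem2.2}, each $p_i$ is a quartic residue modulo $q$; since $p_i \mid n$, we have $q \equiv 1\Mod{p_i}$, so Theorem~\ref{thm5.3} can be applied with $\sigma = \pm 1$, giving that both $1+Y$ and $1-Y$, i.e.\ $1 \pm 2y$, are quadratic residues mod $p_i$. (One must also check compatibility of $p$ versus $p^*$ using that $q\equiv 1\Mod 4$ forces $-1$ to be a QR mod $q$.) This identifies a specific set of admissible residue classes $a \Mod{p_1 \cdots p_k}$ for $y$ satisfying hypothesis~(1) of Lemma~\ref{lem5.4}, and the number of such classes is $\asymp (p_1 \cdots p_k)/4^k$.

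Finally I would apply the Selberg sieve. For each fixed tuple and each admissible $a$, one sifts the set of lattice points $(x,y,z)$ on the hyperboloid with $|z| \leq 2N+1$, $z$ odd, and $y \equiv a \Mod{p_1 \cdots p_k}$ by the condition that $q = (z^2+3)/4$ is not divisible by any prime $\ell \leq X^{\delta_2/3}$ other than the $p_i$. For each such $\ell$ the local density is precisely the $w_3(\ell)/\ell$ from Lemma~\ref{lem5.4}, and the sifting dimension is $1$ because $\sum_{p} w_3(p)/p$ behaves like $\log_2 X$. Summing the error term $O(X^{1-\delta_2})$ of Lemma~\ref{lem5.4} over $\ell \leq X^{\delta_2/3}$ gives a total error $O(X^{1-\delta_2/2})$, which is harmless. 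The Selberg sieve then yields a bound per admissible $a$ of order $X/((p_1 \cdots p_k)^2 \log X) \cdot \prod_{i=1}^k w_1(a, p_i)$. Summing over the $\asymp (p_1 \cdots p_k)/4^k$ admissible residues $a$ and then over $k$-tuples, using $\sum_{p \leq \log_3 N} 1/p \ll \log_4 N$ and Stirling, produces the overall bound
\[
\ll \f{N}{\log N} \cdot \f{1}{4^k} \cdot \f{(C'' \log_4 N)^k}{k!} \ll \f{N}{\log N} \exp\!\left(-C \f{\log_5 N}{\log_6 N}\right),
\]
for a suitable absolute constant $C > 0$.

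The main obstacle I anticipate is the bookkeeping in Step~2 and Step~3: correctly translating the quartic-residue conditions from Lemma~\ref{lem2.2} and Theorem~\ref{thm5.3} into the hypotheses of Lemma~\ref{lem5.4} (in particular checking that the admissible set of $a$ is exactly described by hypothesis~(1) there, and handling the $p^*$ versus $p$ distinction uniformly across $p_i \equiv 1,3 \Mod 4$), and then combining the weights $w_1, w_2, w_3$ coming out of Lemma~\ref{lem5.4} with the Selberg sieve upper bound so that the extraneous factors assemble into the clean Euler product needed for the final $4^{-k}$ savings. Everything else should be a routine, if somewhat lengthy, computation parallel to the one done in Lemma~\ref{lem4.3}.
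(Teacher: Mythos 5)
Your overall strategy is the same as the paper's (split by the $k$ smallest prime factors, use Lemma~\ref{lem2.2} and Theorem~\ref{thm5.3} to turn the quartic condition into congruence classes for $y$ in $n^2+n+1=x^2+4y^2$, then count with Lemma~\ref{lem5.4} and a Selberg sieve), but the final accounting has a genuine gap. Your per-tuple sieve bound only encodes ``$p_i\mid n$ for $i=1,\dots,k$, $y\equiv a$, and $n^2+n+1$ has no small prime factor''; it does not encode the defining property of $V_{p_1,\dots,p_k}(N)$ that $p_1,\dots,p_k$ are the \emph{smallest} prime factors of $n$, i.e.\ that no other prime below $p_k$ divides $n$. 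That condition is worth a factor $\prod_{p<p_k,\,p\neq p_i}\left(1-\frac{1}{p}\right)\asymp\frac{1}{\log p_k}$, and it is exactly what makes the tuple sum bounded: in the paper one has $\sum_{p_1<\cdots<p_k\leq\log_3N}\frac{1}{p_1\cdots p_k}\prod_{p<p_k,\,p\neq p_i}\left(1-\frac{1}{p}\right)\ll 1$, so the whole saving comes from the $2^{-k}$ (or in your normalization $4^{-k}$) factor. Without it, your tuple sum is $\asymp(\log_4N)^k/k!$, and with $k=\lfloor\frac{\log_5N}{3\log_6N}\rfloor$ one has $\frac{(C''\log_4N)^k}{4^kk!}\geq\left(\frac{c\log_4N}{k}\right)^k\to\infty$, since $\log_4N/k\to\infty$; so your displayed inequality $\frac{1}{4^k}\cdot\frac{(C''\log_4N)^k}{k!}\ll\exp\left(-C\frac{\log_5N}{\log_6N}\right)$ is false and the argument as written does not prove the lemma. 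This is why Lemma~\ref{lem5.4} carries the auxiliary primes $q_1,\dots,q_m$ and the weight $w_2$: in the paper, the condition $p_i(n)=p_i$ is enforced inside the count (by inclusion--exclusion/sieving over small primes dividing $n$ via the congruences $b\equiv1\Mod{2q_j}$), producing the factor $\prod_{p<p_k,\,p\neq p_i}\left(1-\frac{w_2(p)}{p}\right)$ in $M_{\mathbf{a},j}(r)$ and hence the $\prod_{p<p_k,\,p\neq p_i}\left(1-\frac{1}{p}\right)$ in~\eqref{eq5.1}. Your proposal never uses this part of Lemma~\ref{lem5.4}, which is the symptom of the missing step.

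Two smaller points. First, the main term of Lemma~\ref{lem5.4} carries a factor $2^k$ which you dropped; keeping it, the summation over the $\asymp p_i/4$ admissible classes $a_i$ gives a net saving of $2^{-k}$ rather than $4^{-k}$ (harmless for the final bound, but your constants are off). Second, the $p$ versus $p^*$ issue is not about $-1$ being a quadratic residue mod $q$ (it always is, as $q\equiv1\Mod4$) but about whether $-1$ is a \emph{quartic} residue mod $q$, which depends on $q\Mod8$, i.e.\ on $n\Mod 8$; the paper handles this by splitting into $n\equiv0,7\Mod8$ and $n\equiv3,4\Mod8$, and accordingly the admissible $a$ are those with $1+2a$ and $1-2a$ both residues \emph{or} both nonresidues mod $p_i$. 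Since hypothesis~(1) of Lemma~\ref{lem5.4} only requires $\left(\frac{1+2a}{p_i}\right)=\left(\frac{1-2a}{p_i}\right)$, this does not change the count of admissible classes, but your claim that both are always residues is not correct as stated.
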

\begin{proof}
The proof begins in the same manner as the proof of Lemma~\ref{lem4.3}. We split $n$ up based on the smallest $k=k(N)$ prime factors of $n$ to get that the size of the set in question is at most
  \begin{align*}
    &\sum_{p_1<\dots<p_k\leq\log_3{N}}\# T_{p_1,\dots,p_k}(N) \\
    &+\#\{n\leq N: n^2+n+1\text{ is prime and }\omega(n)<k\} \\
    &+\#\{n\leq N: n^2+n+1\text{ is prime},\ \omega(n)\geq k,\text{ and }p_k(n)>\log_3{N}\},
  \end{align*}
  where
  \[
T_{p_1,\dots,p_k}(N):=\{n\in S(N):n^2+n+1\equiv 1\Mod{4}\text{ is prime},\ \omega(n)\geq k,\text{ and } p_i(n)=p_i\text{ for }i=1,\dots,k\}.
  \]
  By the argument in the proof of Lemma~\ref{lem3.2}, by setting $k=\lfloor\f{\log_5{N}}{3\log_6{N}}\rfloor$ we get that the sizes of the two sets above are $\ll\f{N}{(\log{N})^{3/2}}$ and $\ll\f{N}{\log{N}(\log_4{N})^{1/2}}$, respectively. So it remains to bound the sum of the $\#T_{p_1,\dots,p_k}(N)$'s.

By Lemma~\ref{lem2.2}, we have that $T_{p_1,\dots,p_k}(N)\subset T_{p_1,\dots,p_k}'(N)$, where
  \begin{align*}
    T'_{p_1,\dots,p_k}(N):=\bigg\{n\leq N:&\ n^2+n+1\equiv 1\Mod{4}\text{ is prime}\\
    &\text{and }p_i(n)=p_i\text{ and }\left(\f{p_i}{n^2+n+1}\right)_4=1\text{ for }i=1,\dots,k\bigg\}.
  \end{align*}
  Applying Theorem~\ref{thm5.3} with $q=n^2+n+1$ for $n\in T_{p_1,\dots,p_k}'(N)$ and $\sigma=1$ gives
  \[
\left(\f{p_i}{n^2+n+1}\right)_4=(-1)^{\f{p_i-1}{2}\cdot\f{n^2+n}{4}}\left(\f{p_i^*}{n^2+n+1}\right)_4=(-1)^{\f{p_i-1}{2}\cdot\f{n^2+n}{4}}\left(\f{1+2y}{p_i}\right)
  \]
  when $q=x^2+4y^2$. So, splitting $n\in T_{p_1,\dots,p_k}'(N)$ up based on whether $n\equiv 0,7\Mod{8}$ or $n\equiv 3,4\Mod{8}$, we see that $\left(\f{p_i}{n^2+n+1}\right)_4=1$ if and only if $n^2+n+1$ can be written in the form $x^2+4y^2$ with $y\equiv a\Mod{p_i}$ for some $a\Mod{p_i}$ such that both $1+2a$ and $1-2a$ are quadratic residues (if $(-1)^{\f{p_i-1}{2}\cdot\f{n^2+n}{4}}=1$) or nonresidues (if $(-1)^{\f{p_i-1}{2}\cdot\f{n^2+n}{4}}=-1$) modulo $p_i$. We will show that
\begin{equation}\label{eq5.1}
\#T_{p_1,\dots,p_k;\mathbf{a},j}'(N)  \ll 2^k\log{k}\prod_{i=1}^kw_1(a_i,p_i)\prod_{\substack{p< p_k \\ p\neq p_i}}\left(1-\f{1}{p}\right)\f{N}{(p_1\cdots p_k)^2\log{N}},
\end{equation}
where
\begin{align*}
T_{p_1,\dots,p_k;\mathbf{a},j}'(N):=\{n\leq N:&\ n^2+n+1\text{ prime},n\equiv j\Mod{8},\ p_i(n)=p_i\text{ for }i=1,\dots,k,\\
  &\text{ and } n^2+n+1=x^2+4y^2\text{ with }y\equiv a_i\Mod{p_i}\text{ for }i=1,\dots,k\}
\end{align*}
for such $\mathbf{a}=(a_1,\dots,a_k)$ and $j=0,3,4,7$.

For each $n\in\N$, set
\[
m_{\mathbf{a}}(n):=\#\{(x,y)\in\Z^2:n^2+n+1=x^2+4y^2\text{ with }y\equiv a_i\Mod{p_i}\text{ for }i=1,\dots,k\},
\]
so that
\[
\#T_{p_1,\dots,p_k;\mathbf{a},j}'(N)\ll\sum_{\substack{n\leq N \\ n^2+n+1\text{ prime } \\ n\equiv j\Mod{8}\\ p_i(n)=p_i\text{ for }i=1,\dots,k}}m_{\mathbf{a}}(n).
\]
For each $r\in\N$ and congruence class $b\Mod{r}$, also set
\[
M_{\mathbf{a},j}(r):=\sum_{\substack{n\leq N\\n\equiv b\Mod{r} \\ n\equiv j\Mod{8} \\ p_i(n)=p_i\text{ for }i=1,\dots,k}}m_{\mathbf{a}}(n).
\]
Note that, if $n^2+n+1=x^2+4y^2$, then, by completing the square, we have that $4x^2+16y^2-(2n+1)^2=3$ . So, by applying Lemma~\ref{lem5.4} when $r\leq N^{\f{\delta'_2}{2}}$ is odd, squarefree, and satisfies $(p_1\cdots p_k,r)=1$ and $b\Mod{r}$ satisfies $b^2\equiv -3\Mod{r}$, we have that
\[
M_{\mathbf{a},j}(r)=C2^k\prod_{j=1}^kw_1(a_j,p_j)\prod_{\substack{p<p_k \\ p\neq p_j}}\left(1-\f{w_2(p)}{p}\right)\prod_{p\mid r}w_3(p)\f{N}{(p_1\cdots p_k)^2r}+O\left(N^{1-\delta_2}\right)
\]
for each choice of $\mathbf{a}=(a_1,\dots, a_k)$ where $1+2a_i$ and $1-2a_i$ are both quadratic residues or nonresidues (depending on $j$ and $p_i$) modulo $p_i$ for $i=1,\dots,k$. Because of the power-saving error term above, we can apply the Selberg sieve to deduce (using that $w_2(p)=1+O(p\1)$ and $w_3(p)=1+\chi_{-1}(p)+O(p\1)$) the desired bound~\eqref{eq5.1} for each $\#T_{p_1,\dots,p_k;\mathbf{a},j}'(N)$.

Now, we sum over all admissible choices of $\mathbf{a}$. There are at most $\f{p_i+O(1)}{4}$ possible choices of $a_i\Mod{p_i}$ for each $p_i>2$ (by considering either the number of points on the conic $x^2+y^2=2$ modulo $p_i$ or the number of points on the conic $x^2+y^2=-2$ modulo $p_i$, since, for example, any $a$ for which $1+2a=y^2$ and $1-2a=x^2$ in $\Z/p\Z$ gives rise to a solution to $x^2+y^2=2$ modulo $p$). We thus have
\[
\sum_{\substack{a\Mod{p_i} \\ \left(\f{1-2a}{p_i}\right)=\left(\f{1+2a}{p_i}\right)=(-1)^{\f{p_i-1}{2}\cdot\f{j^2+j}{4}}}}\f{w_1(a,p_i)}{p_i^2}=\f{1}{4p_i}+O\left(\f{1}{p_i^2}\right),
\]
so, by the Chinese remainder theorem, we get that
\[
\# T'_{p_1,\dots,p_k}(N)\ll\f{\log{k}}{2^k}\f{1}{p_1\cdots p_k}\prod_{\substack{p<p_k \\ p\neq p_i}}\left(1-\f{1}{p}\right)\f{N}{\log{N}},
\]
and are in exactly the same situation as in the end of the proof of Lemma~\ref{lem4.3}. Summing over $p_1<\dots<p_k\leq\log_3{N}$ as in that argument yields the desired bound for the number of $n\in S(N)$ such that $n^2+n+1$ is a prime that is congruent to $1$ modulo $4$.
\end{proof}

Proposition~\ref{prop5.1} now follows from Lemmas~\ref{lem5.2} and~\ref{lem5.5}, and Theorem~\ref{thm1.2} follows from Propositions~\ref{prop3.1},~\ref{prop4.1}, and~\ref{prop5.1}.

\section{The first lattice point count}\label{sec6}
In this section, we prove Lemma~\ref{lem4.4}, the first of our two lattice point counting results. We do this by adapting an argument of Hooley~\cite{Hooley1963}, incorporating a bound of Duke, Friedlander, and Iwaniec~\cite{DukeFriedlanderIwaniec1995} in place of Hooley's bound for weighted averages of sums of additive characters over roots of quadratic congruences. For the convenience of the reader, we record Duke, Friedlander, and Iwaniec's result specialized to the case we will use.
\begin{proposition}[Duke, Friedlander, and Iwaniec, Proposition~4 of~\cite{DukeFriedlanderIwaniec1995}]\label{prop6.1}
  Let $f:\R\to\R$ be a function supported on $[X,2X]$ satisfying
  \[
|f^{(i)}(t)|\ll t^{-i}
  \]
  for each $i=0,\dots, 4$. Then, for all $h\ll X$, we have
  \[
\sum_{d\mid n}f(n)\sum_{\substack{ \nu^2\equiv -3\Mod{n} \\ 0<\nu\leq n}}e\left(h\f{\nu}{n}\right)\ll\tau(d)\left[1+\f{\tau(dh)(d,h)^{\f{1}{2}}}{d}X^{\f{1}{2}}\right]^{\f{1}{2}}X^{\f{1}{2}}\log{2X}.
  \]
\end{proposition}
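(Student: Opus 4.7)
The plan is to follow Hooley's method \cite{Hooley1963}, sharpened by Duke, Friedlander, and Iwaniec, which combines the parametrization of roots of a quadratic congruence by lattice points on a quadric with Poisson summation and Weil's bound for Kloosterman sums.

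First, I would rewrite the inner sum by parametrizing each root $\nu$ with $0<\nu\leq n$ of $\nu^2\equiv -3\Mod{n}$ as a positive integer $m$ with $\nu^2+3=nm$, so that $m\ll X$ since $n\asymp X$. Equivalently, via the correspondence between roots of $\nu^2\equiv -3\Mod{n}$ and integral ideals of norm $n$ in the order $\Z[\sqrt{-3}]$ (or between roots and reduced binary quadratic forms of discriminant $-12$ representing $n$), the double sum becomes
\[
\sum_{\substack{(\nu,n,m)\in\Z^3 \\ \nu^2+3=nm,\ d\mid n \\ X\leq n\leq 2X,\ 0<\nu\leq n}} f(n)\,e\!\left(h\f{\nu}{n}\right).
\]
This reinterpretation lets us separate the modulus $n$ from the ``angular'' variable $\nu$ via a suitable $\SL_2(\Z)$ action on the form $x^2+3y^2$, setting up the sum for Poisson summation.

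Second, I would apply Poisson summation to decouple $\nu$ from $n$. Choosing coordinates that turn $\nu/n$ into a linear function of one of the parametrizing variables, Poisson summation in that variable converts the sum into a dual sum indexed by a frequency $k$, with Fourier-type integrals against $f$ that decay rapidly by the hypothesis $|f^{(i)}(t)|\ll t^{-i}$ for $i\leq 4$. Four integrations by parts truncate the dual variable at $|k|\ll X^{\ve}$ with acceptable loss, and the zero frequency $k=0$ contributes the ``$1$'' appearing inside the bracket of the stated bound, together with the outer $\tau(d)X^{\f{1}{2}}\log{2X}$.

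The heart of the argument, and the main obstacle, is controlling the complete exponential sums produced at each nonzero frequency. These reduce to Kloosterman-type sums to moduli built from $d$ and $h$, and applying Weil's bound $|S(a,b;c)|\leq\tau(c)(a,b,c)^{\f{1}{2}}c^{\f{1}{2}}$ is what produces the divisor factors $\tau(d)$ and $\tau(dh)$ and the gcd factor $(d,h)^{\f{1}{2}}$ in the stated estimate. Managing the off-diagonal contribution by a Cauchy--Schwarz step in the dual variable and summing over the truncated range $|k|\ll X^{\ve}$ yields the second term $\f{\tau(dh)(d,h)^{\f{1}{2}}}{d}X^{\f{1}{2}}$ inside the bracket; the square-root shape arises naturally from this Cauchy--Schwarz. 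Combining the diagonal and off-diagonal contributions then gives the claimed bound. The delicate bookkeeping — isolating exactly the $(d,h)^{\f{1}{2}}$ saving from the Weil bound while maintaining uniformity in $h$ — is the technical crux.
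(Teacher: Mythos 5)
You should first note what the paper actually does here: Proposition~\ref{prop6.1} is not proved in the paper at all --- it is quoted (in specialized form) from Proposition~4 of Duke--Friedlander--Iwaniec~\cite{DukeFriedlanderIwaniec1995}, so there is no internal proof to match. The closest thing in the paper is the from-scratch analogue for the real quadratic case, Lemma~\ref{lem7.2}, proved via Lemma~\ref{lem7.1} by the Hooley-style method: parametrize the roots $\nu$ of the quadratic congruence by representations of $n$ by a binary quadratic form, express $\f{\nu}{n}$ modulo $1$ as a modular-inverse fraction plus a small smooth perturbation, fix one coordinate, complete the resulting incomplete sum in the other coordinate (P\'olya--Vinogradov), and apply the Weil/Sali\'e bounds for the complete Kloosterman sums. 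Your outline belongs to the same circle of ideas, so the overall strategy is the right one.

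As a proof, however, it has concrete gaps. The pivotal step is misstated: you claim one can choose coordinates so that $\f{\nu}{n}$ becomes a \emph{linear} function of a parametrizing variable and then apply Poisson summation. The whole point of the parametrization (compare Lemma~\ref{lem7.1}, where $\f{\nu}{n}\equiv-\f{\overline{s}_r}{r}-\f{3s}{r(r^2-3s^2)}\Mod{1}$) is that $\f{\nu}{n}$ is \emph{not} linear in the lattice coordinates: it is a Kloosterman-type fraction $\overline{s}_r/r$ plus a small smooth term. If the phase were linear, Poisson summation would produce Gauss/Ramanujan-type complete sums, not the Kloosterman sums you invoke Weil for; the Kloosterman structure has to be exhibited explicitly before completion, and the four-derivative hypothesis on $f$ is what controls the smooth perturbation and the truncation in the completion, not a generic stationary-phase truncation at $|k|\ll X^{\ve}$. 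Secondly, the precise shape of the bound --- where $\tau(d)$, $\tau(dh)$, and $(d,h)^{\f{1}{2}}$ enter, why the divisibility condition $d\mid n$ translates into congruence conditions on the form coordinates and hence into the moduli of the completed sums, and why the bracket occurs to the power $\f{1}{2}$ --- is asserted (``Cauchy--Schwarz gives the square-root shape'') rather than derived; nothing in the sketch actually tracks the dependence on $d$ and $h$, which is exactly the uniformity the paper needs when it later sums over $k,\ell\leq X^{\delta_1'}$ in the proof of Lemma~\ref{lem4.4}. So the proposal is a reasonable plan consistent with the DFI argument, but it does not yet constitute a proof of the stated estimate.
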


Now we can prove Lemma~\ref{lem4.4}.
\begin{proof}[Proof of Lemma~\ref{lem4.4}]
We begin by writing the size of the set in Lemma~\ref{lem4.4} as
  \[
\sum_{\substack{x,z\leq X \\ k\mid x,\ \ell\mid z}}\sum_y 1_{y^2-4xz=-3}=\sum_{\substack{x\leq X \\ k\mid x}}\sum_{y^2\equiv -3\Mod{4\ell x}} 1_{\left[\f{X}{\ell}\right]}\left(\f{y^2+3}{4\ell x}\right).
  \]
Splitting $y$ up based on its congruence class modulo $4\ell x$, the above equals
  \[
\sum_{\substack{x\leq X \\ k\mid x}}\sum_{\substack{\nu^2\equiv -3\Mod{4\ell x}\\ 0<\nu\leq 4\ell x}}\sum_{y\equiv\nu\Mod{4\ell x}}1_{\left[\f{X}{\ell}\right]}\left(\f{y^2+3}{4\ell x}\right).
\]

Note that $\f{y^2+3}{4\ell x}\leq\f{X}{\ell}$ if and only if $|y|\leq\sqrt{4xX-3}$. Letting $\psi(u):=\lfloor u\rfloor -u+\f{1}{2}$ denote the sawtooth function, it thus follows that
  \begin{align*}
    \sum_{y\equiv \nu\Mod{4\ell x}}1_{\left[\f{X}{\ell}\right]}\left(\f{y^2+3}{4\ell x}\right) &= 2\cdot\#\{y\in[\sqrt{4xX-3}]: y\equiv\nu\Mod{4\ell x}\} \\
    &=2\left(\left\lfloor \f{\sqrt{4xX-3}-\nu}{4\ell x}\right\rfloor-\left\lfloor\f{-\nu}{4\ell x}\right\rfloor\right) \\
    &= 2\left(\f{\sqrt{4xX-3}}{4\ell x}+\psi\left(\f{\sqrt{4xX-3}-\nu}{4\ell x}\right)-\psi\left(\f{-\nu}{4\ell x}\right)\right).
  \end{align*}
Thus, since $\sqrt{4xX-3}=2\sqrt{xX}+O\left(\f{1}{\sqrt{xX}}\right)$, our desired count equals
  \begin{equation}\label{eq6.1}
2\sum_{\substack{x\leq X \\ k\mid x}}\sum_{\substack{ \nu^2\equiv -3\Mod{4\ell x} \\ 0<\nu\leq 4\ell x}}\left[\f{X^{\f{1}{2}}}{2\ell x^{\f{1}{2}}}+\psi\left(\f{\sqrt{4xX-3}-\nu}{4\ell x}\right)-\psi\left(\f{-\nu}{4\ell x}\right)\right]+O(\log{X}).
\end{equation}

We first deal with the main term of~\eqref{eq6.1}:
\[
\f{X^{\f{1}{2}}}{\ell}\sum_{\substack{x\leq X \\ k\mid x}}\f{\rho(4\ell x)}{x^{\f{1}{2}}}.
\]
Note that we can write $\sum_{\substack{x\leq X \\ k\mid x}}\f{\rho(4\ell x)}{x^{1/2}}=2\ell^{\f{1}{2}}\sum_{\substack{x\leq 4\ell X \\ 4k\ell\mid x}}\f{\rho(x)}{x^{1/2}}$. So, set $L(s):=\sum_{a\equiv 0\Mod{4k\ell}}\f{\rho(a)}{a^s}$ for $\Re(s)>1$. It is shown in Subsection~12.1 of~\cite{DukeFriedlanderIwaniec2012} (the restriction there that $D>0$ is unnecessary for the relevant computation) that
\[
L(s)=\f{\zeta(s)L(s,\chi_{-3})}{\zeta(2s)}\f{\rho(4k\ell)}{(4k\ell)^s}\prod_{p\mid k\ell}\left(1+\f{\chi_{-3}(p)}{p^{s}}\right)\1,
\]
so that $L(s)$ is holomorphic for $\Re(s)\geq 1/2$ aside from a simple pole at $s=1$, where it has residue
\[
\f{6}{\pi^2}L(1,\chi_{-3})\f{\rho(4k\ell)}{4k\ell}\prod_{p\mid k\ell}\left(1+\f{\chi_{-3}(p)}{p}\right)\1.
\]

By partial summation, we have
\[
\sum_{\substack{x\leq 4\ell X \\ 4k\ell\mid x}}\f{\rho(x)}{x^{\f{1}{2}}}=\f{1}{(4\ell X)^{\f{1}{2}}}\sum_{\substack{x\leq 4\ell X \\ 4k\ell\mid x}}\rho(x)+\f{1}{2}\int_{1}^{4\ell X}t^{-\f{3}{2}}\sum_{\substack{x\leq t \\ 4k\ell\mid x}}\rho(x)dt,
\]
while, by a standard contour integration, we also have that
\[
\sum_{\substack{x\leq t \\ 4k\ell\mid x}}\rho(x)=\f{6}{\pi^2}L(1,\chi_{-3})\f{\rho(4k\ell)}{4k\ell}\prod_{p\mid k\ell}\left(1+\f{\chi_{-3}(p)}{p}\right)\1 t+O(t^{\f{3}{4}+\ve}).
\]
So,
\[
\sum_{\substack{x\leq X \\ 4k\ell\mid x}}\f{\rho(x)}{x^{\f{1}{2}}}=C\f{\rho'(k\ell)}{2k\ell^{\f{1}{2}}} X^{\f{1}{2}}+O(X^{\f{1}{4}+\ve})
\]
for some absolute constant $C>0$. Thus, the main term in~\eqref{eq6.1} equals $C\f{\rho'(k\ell)}{k\ell}X+O(X^{\f{3}{4}+\ve})$.

Now we can deal with the error term of~\eqref{eq6.1}:
\[
2\sum_{\substack{x\leq X \\ k\mid x}}\sum_{\substack{ \nu^2\equiv -3\Mod{4\ell x} \\ 0<\nu\leq 4\ell x}}\psi\left(\f{\sqrt{4xX-3}-\nu}{4\ell x}\right)-2\sum_{\substack{x\leq X \\ k\mid x}}\sum_{\substack{ \nu^2\equiv -3\Mod{4\ell x} \\ 0<\nu\leq 4\ell x}}\psi\left(\f{-\nu}{4\ell x}\right).
\]
Arguing as in Section~5 of~\cite{Hooley1963}, we use the Fourier expansion of $\psi$ to write
\[
\psi(u)=\f{1}{\pi}\sum_{h=1}^M\f{\sin(2\pi hu)}{h}+O\left(\min\left(1,\f{1}{M\|u\|}\right)\right)
\]
for some $1\leq M\leq X^{\f{1}{2}}$ to be chosen later, so that the two sums appearing in the error term equal
\[
\f{1}{\pi}\sum_{h=1}^M\sum_{\substack{x\leq X \\ k\mid x}}\sum_{\substack{ \nu^2\equiv -3\Mod{4\ell x} \\ 0<\nu\leq 4\ell x}}\f{\sin(2\pi h\f{\sqrt{4xX-3}-\nu}{4\ell x})}{h}+O\left(\sum_{\substack{x\leq X \\ k\mid x}}\sum_{\substack{ \nu^2\equiv -3\Mod{4\ell x} \\ 0<\nu\leq 4\ell x}}\min\left(1,\f{1}{M\|\f{\sqrt{4xX-3}-\nu}{4\ell x}\|}\right)\right)
\]
and
\[
\f{1}{\pi}\sum_{h=1}^M\sum_{\substack{x\leq X \\ k\mid x}}\sum_{\substack{ \nu^2\equiv -3\Mod{4\ell x} \\ 0<\nu\leq 4\ell x}}\f{\sin(2\pi h\f{-\nu}{4\ell x})}{h}+O\left(\sum_{\substack{x\leq X \\ k\mid x}}\sum_{\substack{ \nu^2\equiv -3\Mod{4\ell x} \\ 0<\nu\leq 4\ell x}}\min\left(1,\f{1}{M\|\f{-\nu}{4\ell x}\|}\right)\right).
\]
To estimate the main term of the first sum, we use the sine addition law to write it as
\[
\f{1}{\pi}\sum_{h=1}^M\sum_{\substack{x\leq X \\ k\mid x}}\sum_{\substack{ \nu^2\equiv -3\Mod{4\ell x} \\ 0<\nu\leq 4\ell x}}\f{\sin(2\pi h\f{\sqrt{4xX-3}}{4\ell x})\cos(2\pi h\f{\nu}{4\ell x})-\cos(2\pi h\f{\sqrt{4xX-3}}{4\ell x})\sin(2\pi h\f{\nu}{4\ell x})}{h}.
\]
Using that $\sum_{\substack{ \nu^2\equiv -3\Mod{4\ell x} \\ 0<\nu\leq 4\ell x}}\cos\left(2\pi h\f{\nu}{4\ell x}\right)=\sum_{\substack{ \nu^2\equiv -3\Mod{4\ell x} \\ 0<\nu\leq 4\ell x}}e\left(h\f{\nu}{4\ell x}\right)$ and $\sin(-t)=-\sin(t)$, the expression above equals
\begin{equation}\label{eq6.2}
\f{1}{\pi}\sum_{h=1}^{M}\f{1}{h}\sum_{\substack{x\leq X \\ k\mid x}}\sin\left(2\pi h\f{\sqrt{4xX-3}}{4\ell x}\right)\sum_{\substack{ \nu^2\equiv -3\Mod{4\ell x} \\ 0<\nu\leq 4\ell x}}e\left(h\f{\nu}{4\ell x}\right).
\end{equation}
To estimate the error term of the first sum, we use the Fourier expansion of the function $u\mapsto\min\left(1,\f{1}{M\|u\|}\right)$ given in Section~5 of~\cite{Hooley1963} combined with the cosine addition law to write the sum inside of the big-$O$ as
\begin{equation}\label{eq6.3}
\f{1}{2}C_0(M)\sum_{\substack{x\leq X \\ k\mid x}}\rho(4\ell x)+\sum_{h=1}^\infty C_h(M)\sum_{\substack{x\leq X \\ k\mid X}}\cos\left(2\pi h\f{\sqrt{4xX-3}}{4\ell x}\right)\sum_{\substack{ \nu^2\equiv -3\Mod{4\ell x} \\ 0<\nu\leq 4\ell x}}e\left(h\f{\nu}{4\ell x}\right),
\end{equation}
where $C_h(M)\ll \f{\log{M}}{M}$ for all $h\geq 0$ and $C_h(M)\ll \f{M}{h^2}$ for all $h\geq 1$.

The main term of the expression for the second sum in the error term of~\eqref{eq6.1} vanishes, and the quantity inside of the error term can, similarly to above, be written as
\begin{equation}\label{eq6.4}
\f{1}{2}C_0(M)\sum_{\substack{x\leq X \\ k\mid x}}\rho(4\ell x)+\sum_{h=1}^\infty C_h(M)\sum_{\substack{x\leq X \\ k\mid X}}\sum_{\substack{ \nu^2\equiv -3\Mod{4\ell x} \\ 0<\nu\leq 4\ell x}}e\left(h\f{\nu}{4\ell x}\right).
\end{equation}

Now we bound~\eqref{eq6.2},~\eqref{eq6.3}, and~\eqref{eq6.4}, starting with the portions of the sums that can be bounded trivially. Take $M=X^{1/1000}$. The contribution to~\eqref{eq6.2} coming from $x\leq X^{99/100}$ is $\ll_\ve\ell^\ve X^{99/100+\ve}$. The contribution to~\eqref{eq6.3} and~\eqref{eq6.4} coming from $h\geq \delta X$ is $\ll_\ve MX^\ve/\delta\ll_\ve X^{1/1000+\ve}/\delta$, and the contribution to the second sum in~\eqref{eq6.3} and~\eqref{eq6.4} coming from $x\leq X^{99/100}$ is $\ll_\ve MX^{99/100+\ve}\ll_\ve X^{991/1000+\ve}$.

To bound the remainder of~\eqref{eq6.2},~\eqref{eq6.3}, and~\eqref{eq6.4}, we will apply Proposition~\ref{prop6.1} on dyadic intervals. Fix a smooth function $\phi:\R\to[0,1]$ supported on $[4,8]$ with $\phi(t)=1$ for $x\in[5,7]$ that decreases to $0$ on $[4,5]$ and $[7,8]$. For any $Y>0$, define
\[
\phi_Y(t):=X^{-12/200}\phi(4t/Y),
\]
so that $\phi_Y:\R\to[0,X^{-12/200}]$ is supported on $[Y,2Y]$, is identically $X^{-12/200}$ on $[5Y/4,7Y/4]$, and, by the chain rule, satisfies
\[
\phi_Y^{(i)}(t)\leq X^{-12/200}\left(\f{4}{Y}\right)^{i}\max_{t'\in[4,8]}\phi^{(i)}(t')\ll X^{-12/200}Y^{-i}
\]
for all $t\in [Y,2Y]$ and $i=0,\dots,4$. Now, we apply Proposition~\ref{prop6.1} on each of the intervals $[Y_i,2Y_i]$ with $Y_i=4/7(5/7)^iX$ for each $i$ for which $Y_i\geq X^{99/100}/2$ and $f(t)$ equal to $\sin(2\pi h(\f{tX}{\ell}-3)^{1/2}/t)\phi_{Y_i}(t)$, $\cos(2\pi h(\f{tX}{\ell}-3)^{1/2}/t)\phi_{Y_i}(t)$, and $\phi_{Y_i}(t)$. The third choice of $f$ obviously satisfies the derivatives condition in Proposition~\ref{prop6.1}, and the other two do as well as long as $h\leq X^{1/100}$. This gives a bound of $\ll_\ve \ell^\ve X^{3/4+12/200+\ve}$ for the contribution of $x\geq X^{99/100}$ to~\eqref{eq6.2} and a bound of $\ll_\ve (k\ell)^\ve(\f{X\log{M}}{M}+X^{3/4+12/200+\ve})\ll_\ve (k\ell)^\ve X^{999/1000+\ve}$ for the contribution of $h\leq X^{1/100}$ and $x\geq X^{99/100}/2$ to~\eqref{eq6.3} and~\eqref{eq6.4}. Combining these bounds with the trivial contributions from the previous paragraph (choosing $\delta=X^{-99/100}$), we conclude that the error term of~\eqref{eq6.1} is $\ll (k\ell)^{\ve}X^{999/1000+\ve}$, completing the proof of the lemma.
\end{proof}

\section{The second lattice point count}\label{sec7}
We will prove Lemma~\ref{lem5.4} following the same strategy as the proof of Lemma~\ref{lem4.4}, with two key differences. The first stems from the fact that these two lemmas concern different hyperboloids, and so a change of variables is needed before the hyperboloid in Lemma~\ref{lem5.4} can be analyzed in a similar manner to the hyperboloid in Lemma~\ref{lem4.4}, which introduces additional complications. The second is that there is not, currently in the literature, any analogue of Proposition~\ref{prop6.1} that can be applied to the situation of Lemma~\ref{lem5.4}. We will prove such a result from scratch in Lemma~\ref{lem7.2}, adapting an argument of Hooley from Section~6 of~\cite{Hooley1963}. One of the ingredients of this proof is the following classical lemma, which connects roots of quadratic congruences to representations by quadratic forms.
\begin{lemma}\label{lem7.1}
  \begin{enumerate}
  \item Let $n\in\N$. There is a bijective correspondence between solutions $\nu\in\Z/n\Z$ to the congruence $\nu^2\equiv 3\Mod{n}$ and equivalence classes of primitive representations of $n$ by the quadratic form $x^2-3y^2$
\[
\{\nu\Mod{n}:\nu^2\equiv 3\Mod{n}\}\leftrightarrow\left\{(r,s)\in\Z^2:n=r^2-3s^2\right\}/\sim,
\]
where $(r,s)\sim (r',s')$ if $r'=ar+bs$ and $s'=cr+ds$ for some $(\begin{smallmatrix}a & b \\ c & d \end{smallmatrix})\in \Aut_{x^2-3y^2}(\Z)$, given by
  \[
\nu=r\rho-3s\sigma \leftrightarrow n=r^2-3s^2,
  \]
  where $r\sigma-s\rho=1$.   In this situation, we have
  \[
\f{\nu}{n}\equiv-\f{\overline{s}_r}{r}-\f{3s}{r(r^2-3s^2)}\Mod{1},
  \]
  where $\overline{s}_r$ denotes the multiplicative inverse of $s$ modulo $r$. In each equivalence class of $\sim$, there is exactly one representation $n=r^2-3s^2$ with $r,s>0$ and $s\leq\f{r}{2}$.
\item Let $n,m\in\N$ with $\gcd(n,m)=1$. Suppose that $n=a^2-3b^2$ and $m=r^2-3s^2$ are primitive representations of $n$ and $m$, respectively, and that $(ar+3bs)\sigma-(as+br)\rho=1$. Then $\nu:=(ar+3bs)\rho-3(as+br)\sigma$ satisfies
  \[
\f{\nu}{n}\equiv -\f{\overline{b}_a}{a}-\f{3b}{a(a^2-3b^2)}\Mod{1}.
  \]
  \end{enumerate}
\end{lemma}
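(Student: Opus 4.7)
The plan is to prove part (2) as an algebraic reduction to part (1), and to prove part (1) by a short direct computation backed by the classical Gauss correspondence between roots of quadratic congruences and primitive representations by forms of discriminant $12$.

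For part (1), I would first verify the congruence $\nu^2 \equiv 3 \pmod{n}$ directly: expanding and using $r^2 \equiv 3s^2 \pmod{n}$ gives $\nu^2 \equiv 3(s^2\rho^2 - 2rs\rho\sigma + 3s^2\sigma^2) \pmod{n}$, and the bracketed expression equals $1 - (r^2 - 3s^2)\sigma^2$ upon expanding $(r\sigma - s\rho)^2 = 1$, hence $\nu^2 \equiv 3 \pmod{n}$. The rational identity then follows from the one-line calculation $r\nu = r^2\rho - 3rs\sigma = (r^2 - 3s^2)\rho + 3s(s\rho - r\sigma) = n\rho - 3s$; dividing by $rn$ and noting $\rho \equiv -\overline{s}_r \pmod{r}$ (which is the hypothesis $r\sigma - s\rho = 1$ read modulo $r$) gives the claimed expression for $\nu/n$ modulo $1$.

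To see that the map $(r, s) \mapsto \nu \pmod{n}$ is well-defined, I would observe that two choices of $(\rho, \sigma)$ solving $r\sigma - s\rho = 1$ differ by an integer multiple of $(s, r)$ by primitivity, which changes $\nu$ by a multiple of $n$. The generators of $\Aut_{x^2 - 3y^2}(\Z)$ — namely $-I$ and the matrix of multiplication by the fundamental unit $2 + \sqrt{3}$ in $\Z[\sqrt{3}]$ — send $(r, s)$ to $(-r, -s)$ or to $(2r + 3s, r + 2s)$, and each such change can be absorbed by a corresponding modification of $(\rho, \sigma)$. Bijectivity onto the set of solutions of $\nu^2 \equiv 3 \pmod{n}$ is then the classical Gauss correspondence between roots of a quadratic congruence and primitive representations by the (unique up to proper equivalence) form of discriminant $12$. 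The canonical representative with $r, s > 0$ and $s \leq r/2$ exists and is unique within each orbit because $s \leq r/2$ cuts out a fundamental domain for the $(2 + \sqrt{3})^{\Z}$-action, and positivity of $r, s$ picks out one of the four sign choices.

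For part (2), the key is the algebraic identity $(ar + 3bs)\sigma - (as + br)\rho = a(r\sigma - s\rho) - b(r\rho - 3s\sigma)$, obtained by regrouping the left-hand side by $a$ and $b$. Setting $\rho' := r\rho - 3s\sigma$ and $\sigma' := r\sigma - s\rho$, the hypothesis $(ar + 3bs)\sigma - (as + br)\rho = 1$ becomes $a\sigma' - b\rho' = 1$, which is exactly the hypothesis of part (1) applied to the representation $n = a^2 - 3b^2$. A direct expansion shows $\nu = (ar + 3bs)\rho - 3(as + br)\sigma = a\rho' - 3b\sigma'$, so part (1) applied to $(a, b)$ with auxiliaries $(\rho', \sigma')$ immediately yields the claimed congruence. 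The main obstacle I anticipate is the bijectivity and the existence of the canonical representative in part (1), both of which rely on classical facts about binary quadratic forms and the units of $\Z[\sqrt{3}]$ rather than on pure calculation; everything else reduces to manipulations of $r\sigma - s\rho = 1$ and $n = r^2 - 3s^2$.
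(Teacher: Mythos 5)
Your proposal is correct and follows essentially the same route as the paper: part (2) is the identical regrouping identity reducing to part (1) applied to $(a,b)$ with auxiliaries $(r\rho-3s\sigma,\,r\sigma-s\rho)$, and part (1) rests on the same classical inputs (the Gauss--Smith correspondence for forms of discriminant $12$ and the fundamental unit $2+\sqrt{3}$), with your direct computations supplying what the paper calls a straightforward manipulation. One trivial slip: two solutions of $r\sigma-s\rho=1$ differ by an integer multiple of $(r,s)$, i.e. $(\rho,\sigma)\mapsto(\rho+tr,\sigma+ts)$, not of $(s,r)$; with the correct parametrization $\nu$ indeed changes by a multiple of $n$, as you conclude.
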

\begin{proof}
  The proof of the first statement can be found in~\cite[Art. 86 and Art. 100]{Smith1894}, but we include an argument here as well. Since every binary quadratic form of discriminant $12$ is equivalent to $x^2-3y^2$, for every $0<\nu\leq n$ satisfying $\nu^2\equiv 3\Mod{n}$, there exists $(\begin{smallmatrix} r & \rho \\ s & \sigma \end{smallmatrix})\in \SL_2(\Z)$ such that $g(rx+\rho y,sx+\sigma y)=x^2-3y^2$, where $g(x,y)$ is the form
  \[
g(x,y):=nx^2+2\nu sy+\f{\nu^2-3}{n}y^2.
  \]
(Further, the set of such $(\begin{smallmatrix} r & \rho \\ s & \sigma \end{smallmatrix})$ is a coset of $\Aut_{x^2-3y^2}(\Z)$ in $\SL_2(\Z)$.) In this situation, we must have $n=r^2-3s^2$ and $\nu=r\rho-s\sigma$. That there is exactly one such matrix $(\begin{smallmatrix} r & \rho \\ s & \sigma \end{smallmatrix})$ satisfying $r,s>0$ and $s\leq\f{r}{2}$ follows from the fact that $2+\sqrt{3}$ is a fundamental unit of $\mathcal{O}_{\mathbb{Q}(\sqrt{3})}$, and the expression for $\f{\nu}{n}$ follows from a straightforward manipulation.

The second statement follows immediately from the fact that, whenever $\ell=t^2-3u^2$ for $\gcd(t,u)=1$ and $t\sigma'-u\rho'=t\sigma''-u\rho''=1$, we have that $t\rho'-3u\sigma'\equiv t\rho''-3u\sigma''\Mod{\ell}$. Indeed, note that the condition $(ar+3bs)\sigma-(as+br)\rho=1$ implies that $a(r\sigma-s\rho)-b(r\rho-3s\sigma)=1$ and the definition of $\nu$ can be rewritten as $\nu=a(r\rho-3s\sigma)-3b(r\sigma-s\rho)$.
\end{proof}

We now argue along the lines of Section~6 of~\cite{Hooley1963} to prove the following lemma.

\begin{lemma}\label{lem7.2}
There exists an absolute constant $0<\gamma<1$ such that the following holds. For every $|h|,|h'|\leq X^{\gamma}$, $k,\ell\leq X^{\gamma}$ relatively prime with $\ell_0:=\rad(\ell)$, $0<d<k$ with $\gcd(d,k)=1$, and $0<w <\ell_0$ with $\gcd(w,\ell_0)=1$, we have that
  \[
\sum_{\substack{u\leq X \\ u\equiv d\Mod{k} \\ u\equiv \ell w\Mod{\ell_0\ell}}}\sum_{\substack{\nu^2\equiv 3\Mod{\ell_0 u} \\ 0<\nu\leq \ell_0 u}}e\left(h\f{\nu\overline{k}}{\ell_0u}\right)e\left(h'\f{\nu}{\ell_0}\right)\ll X^{\f{5}{6}}
  \]
  and
  \[
\sum_{\substack{u\leq X \\ u\equiv d\Mod{k} \\ u\equiv \ell w\Mod{\ell_0\ell}}}e\left(\pm h\f{\sqrt{u(2X-u)+3}}{k\ell_0u}\right)\sum_{\substack{\nu^2\equiv 3\Mod{\ell_0u} \\ 0<\nu\leq \ell_0u}}e\left(h\f{\nu\overline{k}}{\ell_0u}\right)e\left(h'\f{\nu}{\ell_0}\right)\ll X^{\f{5}{6}},
  \]
where $\overline{k}$ denotes the multiplicative inverse of $k$ modulo $\ell_0u$.
\end{lemma}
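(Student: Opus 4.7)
The plan is to adapt the argument of Hooley from Section~6 of~\cite{Hooley1963}. The essential idea is to parametrize the roots $\nu$ of $\nu^2\equiv 3\Mod{\ell_0 u}$ via Lemma~\ref{lem7.1}(1), transforming our double sum over $(u,\nu)$ into a sum over coprime pairs $(r,s)$ with $r,s>0$, $s\leq r/2$, and $\ell_0 u=r^2-3s^2$. The restrictions $u\equiv d\Mod{k}$ and $u\equiv \ell w\Mod{\ell_0\ell}$ translate into congruence conditions on $(r,s)$ modulo $k$ and $\ell_0\ell$ respectively, while the condition $u\leq X$ becomes $r^2-3s^2\leq \ell_0 X$, so that $r,s\ll\sqrt{\ell_0 X}$.

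Under this parametrization, Lemma~\ref{lem7.1}(1) gives $\nu/(\ell_0 u)\equiv -\overline{s}_r/r-3s/(r\ell_0 u)\Mod{1}$, while multiplying $\nu=r\rho-3s\sigma$ by $r$ and using $r\sigma-s\rho=1$ yields $\nu\equiv -3s\overline{r}\Mod{\ell_0}$ whenever $\gcd(r,\ell_0)=1$ (the complementary case has negligible contribution and can be handled separately). The product of the two exponentials thus becomes
\[
e\left(-\frac{h\overline{k}\,\overline{s}_r}{r}\right)\cdot e\left(-\frac{3h'\overline{r}\,s}{\ell_0}\right)\cdot e\left(-\frac{3h\overline{k}\,s}{r\ell_0 u}\right),
\]
the last factor being a slowly-varying phase of size $O(|h|/r^2)$ that can be absorbed via partial summation. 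In the second sum of the lemma, the additional factor $e(\pm h\sqrt{u(2X-u)+3}/(k\ell_0 u))$ depends smoothly on $u$, and hence on $(r,s)$, and therefore contributes only a smooth weight on dyadic intervals.

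For fixed $r$, the resulting inner $s$-sum is an incomplete exponential sum modulo $r\ell_0$ with phase $-h\overline{k}\,\overline{s}_r/r-3h'\overline{r}\,s/\ell_0$ and $s$ running over a subprogression of $[1,r/2]$. I would Fourier-expand the indicator of the $s$-range and of the relevant congruences to complete the sum over $s\Mod{r\ell_0}$ at the cost of a dual frequency $m$. By the Chinese remainder theorem and $\gcd(r,\ell_0)=1$, the complete sum factorizes into a geometric sum modulo $\ell_0$ and a Kloosterman sum
\[
\sum_{\substack{s_1\Mod{r}\\\gcd(s_1,r)=1}}e\left(\frac{-h\overline{k}\,\overline{s_1}+m s_1}{r}\right)
\]
modulo $r$, which by Weil's theorem is bounded by $O((r,h,m)^{1/2}r^{1/2}\tau(r))$. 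Summing this over dual frequencies $m$ (weighted by the Fourier coefficients of the completion), then over $r$ in dyadic ranges up to $\sqrt{\ell_0 X}$ and optimizing, should yield the target bound $X^{5/6}$, mirroring the analogous error-term analysis in Hooley's original argument.

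The main obstacle is ensuring that the Weil bound remains useful despite the twisting factor $e(-3h\overline{k}s/(r\ell_0 u))$, whose argument couples $r$ and $s$ through $u=(r^2-3s^2)/\ell_0$. Following Hooley, the resolution is to Taylor-expand this factor at a reference value of $s$ on dyadic subintervals, producing smooth coefficients of controlled size that can be passed through the Kloosterman estimate via partial summation. Additional care is needed for the degenerate dual frequency $m=0$ combined with large $(h,r)$: in those ranges either the trivial bound suffices, or, in the second sum, the oscillation of $\sqrt{u(2X-u)+3}$ can be exploited through further partial summation. The hypothesis that $|h|,|h'|,k,\ell\leq X^\gamma$ with $\gamma$ sufficiently small ensures that the implicit loss factors of $k$ and $\ell_0$ in the final count do not exceed the saving $X^{1/6}$ afforded by the square-root cancellation.
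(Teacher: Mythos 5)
Your overall strategy is the same as the paper's (an adaptation of Hooley: parametrize the roots $\nu$ via representations by $x^2-3y^2$ using Lemma~\ref{lem7.1}, complete the inner sum, and invoke the Weil bound for Kloosterman sums, with partial summation absorbing the smooth factors), and your numerology $\sum_{r\ll X^{1/2+O(\gamma)}}r^{1/2+\ve}\ll X^{3/4+\ve+O(\gamma)}\ll X^{5/6}$ matches the paper's. But there is a genuine gap at the central step: you treat $\overline{k}$ as a fixed quantity, writing the phase as $e\bigl(-h\overline{k}\,\overline{s}_r/r\bigr)\cdot e\bigl(-3h'\overline{r}s/\ell_0\bigr)\cdot e\bigl(-3h\overline{k}s/(r\ell_0 u)\bigr)$ with the last factor ``of size $O(|h|/r^2)$''. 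In the lemma, $\overline{k}$ is the inverse of $k$ modulo $\ell_0u$, so it varies with $u$, i.e.\ with $(r,s)$; since $r\nmid\ell_0u$, its reduction modulo $r$ is not the inverse of $k$ modulo $r$ and is not constant along the $s$-sum, and if you instead fix an integer representative of size up to $\ell_0u$ the ``correction'' term $3h\overline{k}s/(r\ell_0u)$ is of size $\asymp |h|s/r$, which is not negligible. So the completed $s$-sum is not a Kloosterman sum with a fixed numerator, and the Weil-bound step as you state it is unjustified.

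This is exactly the difficulty the paper's proof is built around: it first uses $wk-v\ell_0d=1$ to replace $\overline{k}$ by the explicit integer $v\f{\ell_0u-\ell_0d}{k}+w\Mod{\ell_0u}$, so that the numerator of the Kloosterman-type phase becomes an explicit quadratic polynomial in the summation variable; after completing the sum modulo $c$ and factoring by the Chinese remainder theorem, one must then \emph{prove} that at every prime power $p^e\|c$ with $p\nmid hk\ell$ the local sum is a nondegenerate Kloosterman sum (the paper does this via the identity $3\overline{k}vc_2^2+c_3(n_1)\equiv\overline{k}\Mod{p^e}$), before Weil/Sali\'e can be applied. To repair your argument you would need to carry out this substitution and nondegeneracy check in your $(r,s)$-coordinates; the phase then has the shape $e_r(\alpha\overline{s}+\gamma s)$ with $s$-dependent algebra that must be made explicit, rather than $e_r(-h\overline{k}\overline{s})$ with a constant numerator. (Two further, more minor points: the paper avoids your case $\gcd(r,\ell_0)>1$ and the $u$-dependence of the phase $e(h'\nu/\ell_0)$ altogether by composing a fixed representation of $\ell_0\ell$ with one of $u/\ell$ via Lemma~\ref{lem7.1}(2), which makes that factor constant in the inner sum; and your dismissal of the $\gcd(r,\ell_0)>1$ contribution as ``negligible'' would need an actual argument.)
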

\begin{proof}
Since $\gcd(\ell_0 d,k)=1$, there exist integers $w$ and $v$ such that $wk-v\ell_0d=1$. Then $\overline{k}\equiv v\f{\ell_0 u-\ell_0d}{k}+w\Mod{\ell_0 u}$ whenever $u\equiv d\Mod{k}$, so we may rewrite the above two sums as
  \begin{equation}\label{eq6.?1}
\sum_{\substack{u\leq X \\ u\equiv d\Mod{k} \\ u\equiv \ell w\Mod{\ell_0\ell}}}\sum_{\substack{\nu^2\equiv 3\Mod{\ell_0 u} \\ 0<\nu\leq \ell_0 u}}e\left(h\f{\nu\left(v\f{\ell_0u-\ell_0d}{k}+w\right)}{\ell_0u}\right)e\left(h'\f{\nu}{\ell_0}\right)
  \end{equation}
and
\begin{equation}\label{eq6.?2}
\sum_{\substack{u\leq X \\ u\equiv d\Mod{k} \\ u\equiv \ell w\Mod{\ell_0\ell }}}e\left(\pm h\f{(2X-u)^{\f{1}{2}}}{k\ell_0u^{\f{1}{2}}}\right)\sum_{\substack{\nu^2\equiv 3\Mod{\ell_0u} \\ 0<\nu\leq \ell_0u}}e\left(h\f{\nu\left(v\f{\ell_0u-\ell_0d}{k}+w\right)}{\ell_0u}\right)e\left(h'\f{\nu}{\ell_0}\right)
\end{equation}
plus a quantity that is $O(1)$.

Using Lemma~\ref{lem7.1} with $n=\ell_0\ell$ and $m=\f{u}{\ell}$, which are coprime, and using the 1--1 correspondence between solutions $\nu^2\equiv 3\pmod{nm}$ and pairs of solutions $\nu_1^2\equiv 3\pmod{n}$ and $\nu_2^2\equiv 3\pmod{m}$, we can write~\eqref{eq6.?1} and~\eqref{eq6.?2} as the sum over $\ll X^{2\gamma}$ pairs of integers $a,b$ satisfying $\ell_0\ell=a^2-3b^2$, $b\leq\f{a}{2}$, $\gcd(a,b)=1$, and $0<a,b< X^{\gamma}$ of the phase $e\left(h'\left(-\f{\overline{b}_a}{a}-\f{3b}{a(a^2-3b^2)}\right)\right)$ times the quantities
\begin{equation}\label{eq6.?3}
\sum_{\substack{b_1,b_2\Mod{\ell_0 k} \\ b_1^2-3b_2^2\equiv \overline{\ell }d\Mod{k} \\ b_1^2-3b_2^2\equiv w\Mod{\ell_0}}}\sum_{\substack{r^2-3s^2\leq \f{X}{\ell_0 \ell} \\ 0<s\leq\f{r}{2} \\ r\equiv b_1\Mod{\ell_0k}\\ s\equiv b_2\Mod{\ell_0 k } \\ \gcd(ar+3bs,as+br)=1}}\psi_1(ar+3bs,as+br)e\left(-h\f{\overline{as+br}_{ar+3bs}}{ar+3bs}\left(v\ell_0\f{(r^2-3s^2)\ell-d}{k}+w\right)\right)
\end{equation}
and
\begin{equation}\label{eq6.?4}
\sum_{\substack{b_1,b_2\Mod{\ell_0 k} \\ b_1^2-3b_2^2\equiv \overline{\ell }d\Mod{k} \\ b_1^2-3b_2^2\equiv w\Mod{\ell_0}}}\sum_{\substack{r^2-3s^2\leq \f{X}{\ell_0 \ell} \\ 0<s\leq\f{r}{2} \\ r\equiv b_1\Mod{\ell_0k}\\ s\equiv b_2\Mod{\ell_0 k } \\ \gcd(ar+3bs,as+br)=1}}\psi_2(ar+3bs,as+br)e\left(-h\f{\overline{as+br}_{ar+3bs}}{ar+3bs}\left(v\ell_0\f{(r^2-3s^2)\ell-d}{k}+w\right)\right),
\end{equation}
respectively, where
\[
\psi_1(\theta,\mu):=e\left(-h\left(\f{3\mu}{\theta(\theta^2-3\mu^2)}\right)\left(v\f{\theta^2-3\mu^2-\ell_0d}{k}+w\right)\right)
\]
and
\[
\psi_2(\theta,\mu):=e\left(h\left[\pm\f{(2X-\f{\theta^2-3\mu^2}{\ell_0})^{\f{1}{2}}}{k\ell_0^{\f{1}{2}}(\theta^2-3\mu^2)^{\f{1}{2}}}-\left(\f{3\mu}{\theta(\theta^2-3\mu^2)}\right)\left(v\f{\theta^2-3\mu^2-\ell_0d}{k}+w\right)\right]\right).
\]

For each possible value of $c:=ar+3bs$ and each pair $(b_1,b_2)\Mod{\ell_0k}$ satisfying $b_1^2-3b_2^2\equiv \overline{\ell}d\Mod{k}$ and $b_1^2-3b_2^2\equiv w\Mod{\ell_0}$, we will bound the inner sums over the values of $as+br$ in~\eqref{eq6.?3} and~\eqref{eq6.?4}:
\begin{equation}\label{eq6.?5}
  \sum_{\eqref{eq7.???}}\psi_1(c,as+br)e\left(-h\f{\overline{as+br}_c}{c}\left(v\f{c^2-3(as+br)^2-\ell_0d}{k}+w\right)\right)
\end{equation}
and
\begin{equation}\label{eq6.?6}
  \sum_{\eqref{eq7.???}}\psi_2(c,as+br)e\left(-h\f{\overline{as+br}_c}{c}\left(v\f{c^2-3(as+br)^2-\ell_0d}{k}+w\right)\right)
\end{equation}
respectively, where~\eqref{eq7.???} denotes the conditions
\begin{equation}\label{eq7.???}
  \begin{cases}
r^2-3s^2\leq \f{X}{\ell_0\ell}\\0<s\leq\f{r}{2} \\ ar+3bs=c \\ r\equiv b_1\Mod{\ell_0k}\\ s\equiv b_2\Mod{\ell_0k} \\ \gcd(c,as+br)=1
  \end{cases}
\end{equation}
Set
\[
M:=\max_{\eqref{eq7.???}}(as+br)\qquad\text{ and }\qquad m:=\min_{\eqref{eq7.???}}(as+br),
\]
and note that $M\ll c$ and $c^2-M^2\gg c^2X^{-O(\gamma)}$. Using partial summation, we can bound~\eqref{eq6.?5} by
\[
|g_c\left(M\right)||\psi_1\left(c,M+1\right)|+\sum_{\substack{m\leq t\leq M}}|g_c(t)||\psi_1(c,t)-\psi_1(c,t+1)|
\]
and~\eqref{eq6.?6} by
\[
|g_c\left(M\right)||\psi_2\left(c,M+1\right)|+\sum_{\substack{m\leq t\leq M}}|g_c(t)||\psi_2(c,t)-\psi_2(c,t+1)|
,
\]
where
\[
g_c(t):=\sum_{\substack{as+br\leq t \\ \eqref{eq7.???}}}e\left(-h\f{\overline{as+br}_c}{c}\left(v\f{c^2-3(as+br)^2-\ell_0d}{k}+w\right)\right).
\]
Note that $|\psi_1|,|\psi_2|\leq 1$, and
\[
  |\psi_1(c,t)-\psi_1(c,t+1))|\ll \f{X^{O(\gamma)}}{c}
\]
and
\[
  |\psi_2(c,t)-\psi_2(c,t+1))|\ll \f{X^{\f{1}{2}+O(\gamma)}}{c^2}+\f{X^{O(\gamma)}}{c}
\]
when $t\leq M$.

To deduce a bound for $g_c(t)$, we start by writing $c=\ell_0k n_1+c_1$ and $as+br=\ell_0k n_2+c_2$ in the definition of $g_c(t)$ with $c_1:=ab_1+3bb_2$ and $c_2:=ab_2+bb_1$, so that
\[
  g_c(t)=\sum_{\substack{\ell_0kn_2+c_2=as+br\leq t \\\eqref{eq7.???}}}e_{c}\left(-h\overline{(\ell_0kn_2+c_2)}_c\cdot\left(v\left[\ell_0^2k (n_1^2-3n_2^2)+2\ell_0(c_1n_1-3c_2n_2)\right]+w'\right)\right)
\]
for $w'=w+v\f{c_1^2-3c_2^2-\ell_0d}{k}$, where $e_c(z):=e(z/c)$. The above can be rewritten as
\[
  \sum_{\substack{\ell_0kn_2+c_2=as+br\leq t \\\eqref{eq7.???}}}e_{c}\left(-h\cdot\left(-3\ell_0 v n_2+\overline{(\ell_0kn_2+c_2)}_c(-3\ell_0vc_2n_2+c_3(n_1))\right)\right),
\]
where $c_3(n_1)=w'+\ell_0 v c_1n_1$, since $\ell_0 v n_1(\ell_0k n_1+2c_1)\equiv \ell_0 v n_1 c_1\Mod{\ell_0kn_1+c_1}$.

With a view towards using the P\'olya--Vinogradov method to bound $g_c(t)$, we will first consider complete sums of the form
  \[
s(\xi):=\sum_{\substack{t\Mod{c}\\ (c,\ell_0k t+c_2)=1}}e_{c}\left(-h\cdot\left(-3\ell_0 v t+\overline{(\ell_0kt+c_2)}(-3\ell_0vc_2t+c_3(n_1))\right)-\xi t\right)
  \]
  for all $\xi\Mod{c}$. Also define, for all prime powers $p^{e}\| c$, the sum $s_{p^e}(\xi)$ to be
  \[
  \sum_{\substack{t\Mod{p^e}\\ \gcd(p,\ell_0k t+c_2)=1}}e_{p^e}\left(-h\cdot\left(-3\ell_0 v t+\overline{(\ell_0kt+c_2)}\left(\prod_{p^e\neq q^{e'}\| c}\overline{q^{e'}}\right)(-3\ell_0vc_2t+c_3(n_1))\right)-\xi t\right).
    \]
    By the Chinese remainder theorem, we have
    \[
s(\xi)=\prod_{p^e\| c}s_{p^e}(\xi).
    \]
    As a consequence,
\[
|s(\xi)|\ll X^{O(\gamma)} \prod_{\substack{p^e\| c \\ p\nmid hk\ell}}|s_{p^e}(\xi,c_3(n_1))|,
\]
so that, to bound $|s(\xi)|$, it suffices to bound each of the $s_{p^e}(\xi,c_3(n_1))$'s when $p\nmid hk\ell$.

If $p\nmid k\ell$, then $\ell_0k$ is invertible modulo $p^e$ for any $e>0$, and so we can write
\begin{align*}
|s_{p^c}(\xi,c_3(n_1))| &= \left|\sum_{\substack{t\Mod{p^e}\\ \gcd(p,\ell_0 k t+c_2)=1}}e_{p^e}\left(-h\cdot\left(-3\ell_0 v t+\overline{(\ell_0kt+c_2)}Q(-3\ell_0vc_2t+c_3(n_1))\right)-\xi t\right)\right| \\
  &= \left|\sum_{\substack{t\Mod{p^c}\\ \gcd(p,t)=1}}e_{p^c}\left((3h\ell_0 v-\xi)\overline{\ell_0k}t-hQ\left(3\overline{k}vc_2^2+c_3(n_1)\right)\overline{t}\right)\right|
\end{align*}
for $Q=Q_p=\prod_{p^e\neq q^{e'}\| c}\overline{q^{e'}}\not\equiv 0\Mod{p}$. Using that $c_3(n_1)=w'+\ell_0 vc_1n_1$, $w'=w+v\f{c_1^2-3c_2^2-\ell_0d}{k}$, and $wk-v\ell_0d=1$, a short manipulation gives $3\overline{k}vc_2^2+c_3(n_1)\equiv \overline{k}\Mod{p^e}$. Thus,
\[
s_{p^e}(\xi,c_3(n_1))=\left|\sum_{\substack{t\Mod{p^c}\\ (p,t)=1}}e_{p^c}\left((3h\ell_0 v-\xi)\overline{\ell_0k}t-(hQ\overline{k})\overline{t}\right)\right|,
\]
so that $s_{p^e}(\xi,c_3(n_1))$ is a complete Kloosterman sum. We therefore have that
\[
  |s_{p^e}(\xi,c_3(n_1))|\leq(e+1)p^{\f{e}{2}},
\]
since $\gcd(p,h)=1$. (See Theorem~2 of~\cite{Hooley1957}, for example, for a statement of a general bound for Kloosterman sums. The above is the Weil bound for $e=1$, and is due to Sali\'e for $e>1$.) We conclude that
\[
  |s(\xi)|\ll X^{O(\gamma)}\tau(c)c^{\f{1}{2}}.
\]

To use the P\'olya--Vinogradov method, we will also require bounds for the Fourier coefficients of the subset of integers
\begin{align*}
  T_c:=\bigg\{n_2\in\Z:\ \ell_0kn_2+c_2=as+br\leq t,\ 0<r^2-3s^2\leq\f{X}{\ell_0\ell },\ 0<s\leq\f{r}{2},\ ar+3bs=c,& \\
r\equiv b_1\Mod{\ell_0k},\ s\equiv b_2\Mod{\ell_0k},\text{ and }\gcd(c,as+br)=1&\bigg\}
\end{align*}
defined by the conditions $as+br\leq t$ and~\eqref{eq7.???}, modulo $c$. For any $\xi\Mod{c}$, by a tedious but straightforward change of variables and an application of the triangle inequality, we have
\[
|\widehat{T_c}(\xi)|=\left|\sum_{\substack{r'\in I \\ \gcd(3bc,ac-(a^2-3b^2)r')=1}}e\left(\f{\ell_0\ell\xi r'}{3bc}\right)\right|
\]
for some interval $I=I_{a,b,c,b_1,b_2,t,\ell_0,\ell,k}$ of length less than $c$. Thus, we have $|\widehat{T_c}(0)|<c$ and, for $\xi\neq 0$, we have $|\widehat{T_c}(\xi)|\ll X^{O(\gamma)}\|\f{\ell_0\ell\xi}{c}\|\1$.

Noting that $\gcd(\ell_0\ell,c)=1$, we conclude using Parseval's identity that
\[
g_c(t)=\f{1}{c}\sum_{\xi\Mod{c}}s(\xi)\widehat{T_c}(\xi)\ll X^{O(\gamma)}c^{\f{1}{2}+\ve}
\]
Combining this with our bounds involving $\psi_1(c,t)$ and $\psi_2(c,t)$ above, we deduce from our application of partial summation that the sums~\eqref{eq6.?5} and~\eqref{eq6.?6} are $\ll X^{O(\gamma)}c^{\f{1}{2}+\ve}$ and $\ll X^{O(\gamma)}(c^{\f{1}{2}+\ve}+X^{\f{1}{2}}c^{-\f{1}{2}+\ve})$, respectively. Summing over all $\ll X^{\f{1}{2}+O(\gamma)}$ possibilities for $c$, which all satisfy $c\ll X^{\f{1}{2}+O(\gamma)}$, we get that~\eqref{eq6.?5} and~\eqref{eq6.?6} are both $\ll X^{\f{3}{4}+\ve+O(\gamma)}$. Taking $\gamma$ sufficiently small completes the proof of the lemma.
\end{proof}

Now we can finally prove Lemma~\ref{lem5.4}.

\begin{proof}[Proof of Lemma~\ref{lem5.4}]
The proof of Lemma~\ref{lem5.4} follows the same general outline of the proof of Lemma~\ref{lem4.4}. However, we are concerned with a different hyperboloid, so we note it suffices to prove that
  \begin{equation}\label{eq7.??}
    \#\{(x,y,z)\in\Z^3:4x^2+16y^2-z^2=3,\ |z|\leq X,\ y\equiv a\Mod{2\ell},\text{ and }z\equiv b\Mod{8\ell}\}
  \end{equation}
  equals
\[
  C2^{k+m}\prod_{p\mid \ell}\f{1-\f{\chi_{12}(p)}{p}}{1-\f{1}{p^2}}\cdot\f{X}{\ell^2}
  \]
  times
\[
\prod_{i=1}^k(1+\chi_{12}(p_i))^{\delta_{\f{1}{4}}(a\Mod{p_i})}\cdot\prod_{j=1}^{m}\f{(1+\chi_{12}(q_j))^{\delta_{\f{1}{4}}(a\Mod{q_j})}}{2^{\delta_{\pm\f{1}{2}}(a\Mod{q_j})}}\cdot\prod_{p\mid r}\f{(1+\chi_{-1}(p))^{1+\delta_{\f{b}{4}}(a\Mod{p})}}{2^{\delta_{0}(a\Mod{p})}},
\]
plus a quantity that is $O(X^{1-\delta_2})$, for every $a\Mod{2\ell}$ that satisfies $\left(\f{2a+1}{q_j}\right)\left(\f{1-2a}{q_j}\right)\neq -1$ for each $j=1,\dots,m$, for then we can just sum over all $2r\prod_{j=1}^{m}\f{q_j+1}{2}$ of the possible values of $a$ modulo $2rq_1\cdots q_m$.

To estimate~\eqref{eq7.??}, we make the change of variables $u\mapsto z-4y$ and $v\mapsto z+4y$ to write~\eqref{eq7.??} as $4$ times the quantity
\[
\#\{(x,u,v)\in\Z^3:4x^2-uv=3,\ u\in[X],\ v\in[2X-u],\ u\equiv b-4a\Mod{8\ell},\ v\equiv b+4a\Mod{8\ell}\},
\]
which, setting $\ell_0:=\gcd(\ell,b-4a)$ and $k:=\f{8\ell}{\ell_0}$ and using hypotheses~(1),~(2),~(3), and~(4) of the statement of the lemma and our choice of $a\Mod{2\ell}$, we can write as the sum of
\begin{equation}\label{eq7.?}
  2^{k+m}\prod_{j=1}^{m}\f{1}{2^{\delta_{\pm\f{1}{2}}(a\Mod{q_j})}}\cdot\prod_{p\mid r}\f{1+\chi_{-1}(p)}{2^{\delta_{0}(a\Mod{p})}}
\end{equation}
quantities of the form
  \[
2\sum_{\substack{u\leq X \\ u\equiv b-4a\Mod{8\ell}}}\sum_{\substack{4\nu^2\equiv 3\Mod{\ell_0 u} \\ 0<\nu\leq \ell_0 u}}\sum_{\substack{x\equiv \nu'\Mod{\ell_0 u} \\ x\equiv c\Mod{k}}}1_{[2X-u]}\left(\f{4x^2-3}{u}\right)
\]
for
\[
\nu'\equiv \nu+\overline{8\nu}(b+4a)P_u\Mod{\ell_0 u},
\]
where $P_u\equiv p^{e}\Mod{p^{e+1}}$ for each $p^{e}\| u$ with $p\mid \ell$ and $P_u\equiv 0\Mod{p^e}$ for each $p^e\|u$ with $p\nmid 8\ell$, and some $c\Mod{k}$. As in the proof of Lemma~\ref{lem4.4}, the sum above equals
\begin{equation}\label{eq6.5}
\sum_{\substack{u\leq X \\ u\equiv b-4a\Mod{8\ell}}}\sum_{\substack{4\nu^2\equiv 3\Mod{\ell_0u} \\ 0<\nu\leq \ell_0u}}\left[\f{(2X-u)^{\f{1}{2}}}{16\ell u^{\f{1}{2}}}+\psi\left(\f{\sqrt{2uX-u^2+3}-\nu_k}{16\ell u}\right)-\psi\left(\f{-\nu_k}{16\ell u}\right)\right]
\end{equation}
plus a quantity that is $O(\log{X})$, where
\[
\nu_k=\nu'\cdot\overline{k}_{\ell_0 u}k+c\cdot\overline{\ell_0(b-4a)}_{k}\ell_0u.
\]
(Here $\overline{k}_{\ell_0 u}$ denotes the multiplicative inverse of $k$ modulo $\ell_0 u$ and, similarly, $\overline{\ell_0(b-4a)}_{k}$ denotes the multiplicative inverse of $\ell_0(b-4a)$ modulo $k$.)

We first deal with the main term of~\eqref{eq6.5}, which, by Hensel's lemma, equals
\[
\f{1}{16\ell}\sum_{\substack{u\leq X \\ u\equiv b-4a\Mod{8\ell}}}\f{\rho''(\ell_0u)(2X-u)^{\f{1}{2}}}{u^{\f{1}{2}}},
\]
where $\rho''(n):=\#\{\nu\Mod{n}:4\nu^2\equiv 3\Mod{n}\}$. The treatment of this quantity is similar to the treatment of the main term of~\eqref{eq6.1}, except that we will need to derive expressions ourselves for the Dirichlet series $\overline{\chi(\ell_0)}\ell_0^s L_\chi(s)$, where 
\[
  L_{\chi}(s):=\sum_{\ell_0^2\mid n}\f{\rho''(n)\chi(n)}{n^{s}},
\]
for each Dirichlet character $\chi$ modulo $k$. We do this by computing the local factors of these Dirichlet series.

For $p=3$, we have the local factor $1+\f{\chi(3)}{3^s}$, and for all $p>3$ with $p\nmid \ell$, we have the local factor
\[
\left(1-\f{\chi(p)}{p^s}\right)\1 \left(1+\f{(\chi\chi_{12})(p)}{p^s}\right).
\]
For $p\mid k$, the local factor is just $1$, and for $p\mid \ell_0$, we have the local factor
\[
\left(1-\f{\chi(p)}{p^s}\right)\1\f{(1+\chi_{12}(p))\chi(p^2)}{p^{2s}}.
\]
It then follows from a small amount of manipulation that
\[
  L_\chi(s)=\left(1-\f{\chi(3)}{3^s}\right)\rho''(\ell_0)\f{\chi(\ell_0^2)}{\ell_0^{2s}}\prod_{p\mid \ell_0}\left(1+\f{(\chi\chi_{12})(p)}{p^s}\right)\1\f{L(s,\chi)L(s,\chi\chi_{12})}{L(2s,\chi^2)},
\]
so that
\[
\overline{\chi(\ell_0)}\ell_0^sL_\chi(s)=\left(1-\f{\chi(3)}{3^s}\right)\rho''(\ell_0)\f{\chi(\ell_0)}{\ell_0^{s}}\prod_{p\mid \ell_0}\left(1+\f{(\chi\chi_{12})(p)}{p^s}\right)\1\f{L(s,\chi)L(s,\chi\chi_{12})}{L(2s,\chi^2)}.
\]
Thus, $\overline{\chi(\ell_0)}\ell_0^sL_{\chi}(s)$ is holomorphic for $\Re(s)\geq\f{1}{2}$ except, when $\chi$ is the trivial character, for a pole of residue
\[
  C'\f{\rho''(\ell_0)}{\ell_0}\prod_{p\mid \ell_0}\left(1+\f{\chi_{12}(p)}{p}\right)\1\prod_{p\mid k}\f{1-\f{\chi_{12}(p)}{p}}{1+\f{1}{p}}
\]
at $s=1$, where $C'>0$ is an absolute constant. As in the proof of Lemma~\ref{lem4.4}, a standard contour integration tells us that
\[
\sum_{\substack{x\leq t \\ \ell_0\mid x}}\rho''(\ell_0x)\chi(x)\ll (\ell t)^{3/4}
\]
when $\chi$ is nontrivial, and
\[
\sum_{\substack{x\leq t \\ \ell_0\mid x}}\rho''(\ell_0x)\chi(x)=  C'\f{\rho''(\ell_0)}{\ell_0}\prod_{p\mid \ell}\left(1-\f{\chi_{12}(p)}{p}\right)\prod_{p\mid \ell_0}\left(1-\f{1}{p^2}\right)\1\prod_{p\mid k}\left(1+\f{1}{p}\right)\1 t+ O((\ell t)^{\f{3}{4}})
\]
when $\chi$ is trivial. Now, using the orthogonality of Dirichlet characters, we have that
\begin{align*}
\sum_{\substack{x\leq t \\ x\equiv b-4a\Mod{8\ell}}}\rho''(\ell_0x)&=\f{1}{\vp(k)}\sum_{\chi\Mod{k}}\overline{\chi(\ell_0)}\sum_{\substack{x\leq t \\ \ell_0\mid x}}\rho''(\ell_0x)\chi(x) \\
&=C'\f{\rho''(\ell_0)}{\ell_0}\prod_{p\mid \ell}\left(1-\f{\chi_{12}(p)}{p}\right)\prod_{p\mid 8\ell}\left(1-\f{1}{p^2}\right)\1\f{t}{k}+ O((\ell t)^{\f{3}{4}}),
\end{align*}
from which it follows from partial summation that the main term of~\eqref{eq6.5} equals
\[
C''\rho''(\ell_0)\prod_{p\mid \ell}\f{1-\f{\chi_{12}(p)}{p}}{1-\f{1}{p^2}}\cdot\f{X}{(8\ell)^2}+O((\ell X)^{\f{3}{4}}).
\]
Noting that  $p_i\mid\ell_0$ if and only if $a\equiv\f{1}{4}\Mod{p_i}$ and $q_j\mid\ell_0$ if and only if $a\equiv\f{1}{4}\Mod{q_j}$, we get the promised main term in the statement of Lemma~\ref{lem5.4} by multiplying by~\eqref{eq7.?} and summing over the possible choices of $a$ modulo $2rq_1\cdots q_m$.

We must now bound the error term of~\eqref{eq6.5}:
\[
\sum_{\substack{u\leq X \\ u\equiv b-4a\Mod{8\ell}}}\sum_{\substack{4\nu^2\equiv 3\Mod{\ell_0 u} \\ 0<\nu\leq \ell_0 u}}\psi\left(\f{\sqrt{u(2X-u)+3}-\nu_{k}}{16\ell u}\right)-\sum_{\substack{u\leq X \\ u\equiv b-4a\Mod{8\ell}}}\sum_{\substack{4\nu^2\equiv 3\Mod{\ell_0 u} \\ 0<\nu\leq \ell_0u}}\psi\left(\f{-\nu_{k}}{16\ell u}\right).
\]
Let $r_1\dots r_d=\ell_0$ be the prime factorization of $\ell_0$. To deal with the dependence of $P_u$ on $u$, we will split these sums over $u$ up based on the $d$-tuple $(e_1,\dots,e_d)$ for which $r_1^{e_1}\cdots r_d^{e_d}\| u$ and the congruence class of $\f{u}{r_1^{e_1}\cdots r_{d}^{e_d}}$ modulo $\ell_0$. Indeed, for each fixed $d$-tuple $\mathbf{e}=(e_1,\dots,e_d)$ and congruence class $w$ modulo $\ell_0$, there exists a constant $P'_{\mathbf{e},w}$ such that that $P_u=u\cdot P'_{\mathbf{e},w}$ whenever $r_1^{e_1}\cdots r_d^{e_d}\| u$ and  $\f{u}{r_1^{e_1}\cdots r_{d}^{e_d}}\equiv w\Mod{\ell_0}$ (just take $P_u=\overline{w}_{\ell_0}b_{\mathbf{e}}u$ where $\overline{w}_{\ell_0}$ denotes the multiplicative inverse of $w$ modulo $\ell_0$ and $b_{\mathbf{e}}\equiv \prod_{j\neq i}\overline{r_j^{e_j}}\Mod{r_i^{e_i}}$ for each $i=1,\dots,d$). So, with $\gamma$ as in Lemma~\ref{lem7.2}, we write the first sum above as
\[
\sum_{\substack{e_1,\dots,e_d\geq 1 \\ r_1^{e_1}\cdots r_{d}^{e_d}\leq X^{4\delta_2'} \\ w\in(\Z/\ell_0\Z)^\times}}\sum_{\substack{u\leq X \\ u\equiv b-4a\Mod{k} \\ u\equiv r_1^{e_1}\cdots r_d^{e_d}w\Mod{\ell_0 r_1^{e_1}\cdots r_d^{e_d}}}}\sum_{\substack{4\nu^2\equiv 3\Mod{\ell_0u} \\ 0<\nu\leq \ell_0u}}\psi\left(\f{\sqrt{u(2X-u)+3}-\nu_{k}}{16\ell u}\right)+O(X^{1-2\delta_2'+\ve}),
\]
(since any $u$ with $r_1^{e_1}\cdots r_d^{e_d}\| u$ for some $r_1^{e_1}\cdots r_d^{e_d}>X^{4\delta_2'}$ must be divisible by one of at most $(\log{X^{4\delta_2'}})^d$ many integers of size at least $X^{4\delta_2'}$, where $d\leq\f{\log{\ell}}{\log_2{\ell}}\leq 2\delta_2'\f{\log{X}}{\log_2{X}}$, so that $(\log{X^{4\delta_2'}})^{d}\ll X^{2\delta_2'}$) and, similarly, the second sum above as
\[
\sum_{\substack{e_1,\dots,e_d\geq 1 \\ r_1^{e_1}\cdots r_{d}^{e_d}\leq X^{4\delta_2'}\\w\in(\Z/\ell_0\Z)^\times}}\sum_{\substack{u\leq X \\ u\equiv b-4a\Mod{k} \\ u\equiv r_1^{e_1}\cdots r_d^{e_d}w\Mod{\ell_0r_1^{e_1}\cdots r_d^{e_d}}}}\sum_{\substack{4\nu^2\equiv 3\Mod{\ell_0u} \\ 0<\nu\leq \ell_0u}}\psi\left(\f{-\nu_{k}}{16\ell u}\right)+O(X^{1-2\delta'_2+\ve}).
\]

Just like in the proof of Lemma~\ref{lem4.4}, we will insert the Fourier expansion for the sawtooth function to deal with each of the above inner sums over $u$. For every $1\leq M\leq X^{\f{1}{2}}$, we get that the first inner sum over $u$ above can be written as the sum of
\[
\f{1}{\pi}\sum_{h=1}^M\f{1}{h}\cos\left(2\pi h\f{c\cdot\overline{\ell_0(b-4a)}_k}{2k}\right)E_{1}(h),
\]
where $E_1(h)$ equals
\[
  \sum_{\substack{u\leq X \\ u\equiv b-4a\Mod{k} \\ u\equiv r_1^{e_1}\cdots r_d^{e_d}w\Mod{\ell_0r_1^{e_1}\cdots r_d^{e_d}}}}\sin\left(2\pi h\f{\sqrt{u(2X-u)+3}}{16\ell u}\right)\sum_{\substack{4\nu^2\equiv 3\Mod{\ell_0 u} \\ 0<\nu\leq \ell_0 u}}e\left(h\left[\f{\nu\overline{k}_{\ell_0u}}{\ell_0 u}+\f{\nu\overline{24k}_{\ell_0}(b+4a)P_{\mathbf{e},w}'}{\ell_0}\right]\right),
\]
and
\[
  \f{1}{\pi}\sum_{h=1}^M\f{1}{h}\sin\left(2\pi h\f{c\cdot\overline{\ell_0(b-4a)}_{k}}{2k}\right)E_2(h),
\]
where $E_2(h)$ equals
\[
  \sum_{\substack{u\leq X \\ u\equiv b-4a\Mod{k} \\ u\equiv r_1^{e_1}\cdots r_d^{e_d}w\Mod{\ell_0r_1^{e_1}\cdots r_d^{e_d}}}}\cos\left(2\pi h\f{\sqrt{u(2X-u)+3}}{16\ell u}\right)\sum_{\substack{4\nu^2\equiv 3\Mod{\ell_0 u} \\ 0<\nu\leq \ell_0 u}}e\left(h\left[\f{\nu\overline{k}_{\ell_0u}}{\ell_0 u}+\f{\nu\overline{24k}_{\ell_0}(b+4a)P_{\mathbf{e},w}'}{\ell_0}\right]\right),
\]

plus an error term that is at most an absolute constant times
\begin{align*}
  &\sum_{h=1}^{\infty}C_h(M)\cos\left(2\pi h\f{c\cdot\overline{\ell_0(b-4a)}_k}{k}\right)E_2(h)-\sum_{h=1}^{\infty}C_h(M)\sin\left(2\pi h\f{c\cdot\overline{\ell_0(b-4a)}_k}{k}\right)E_1(h)\\
  &+C_0(M)\sum_{\substack{u\leq X \\ u\equiv b-4a\Mod{k} \\ u\equiv r_1^{e_1}\cdots r_d^{e_d}w\Mod{\ell_0r_1^{e_1}\cdots r_d^{e_d}}}}\rho''(u)
\end{align*}
and, similarly, the second inner sum over $u$ above can be written as
\begin{align*}
  \f{1}{\pi}\sum_{h=1}^M\f{1}{h}\sin\left(2\pi h\f{c\cdot\overline{\ell_0(b-4a)}_k}{2k}\right)E_3(h),
\end{align*}
where $E_3(h)$ equals
\[
\sum_{\substack{u\leq X \\ u\equiv b-4a\Mod{k} \\ u\equiv r_1^{e_1}\cdots r_d^{e_d}w\Mod{\ell_0 r_1^{e_1}\cdots r_d^{e_d}}}}\sum_{\substack{4\nu^2\equiv 3\Mod{\ell_0 u} \\ 0<\nu\leq \ell_0 u}}e\left(h\left[\f{\nu\overline{k}_{\ell_0u}}{\ell_0 u}+\f{\nu\overline{24k}_{\ell_0}(b+4a)P_{\mathbf{e},w}'}{\ell_0}\right]\right),
\]

plus an error term that is at most an absolute constant times
\[
\sum_{h=1}^{\infty}C_h(M)\cos\left(2\pi h\f{c\cdot \overline{\ell_0(b-4a)}_k}{2k}\right)E_3(h)+C_0(M)\sum_{\substack{u\leq X \\ u\equiv b-4a\Mod{k} \\ u\equiv r_1^{e_1}\cdots r_d^{e_d}w\Mod{\ell_0 r_1^{e_1}\cdots r_d^{e_d}}}}\rho''(u).
\]

To conclude, we apply Lemma~\ref{lem7.2} to bound each of $E_1(h)$, $E_2(h)$, and $E_3(h)$ by $X^{\f{5}{6}}$ when $hX^{O(\delta_2')}\leq X^{\gamma}$. Combining this with the trivial bound $|E_i(h)|\ll X^{1+\ve}$ for $i=1,2,3$ when $hX^{O(\delta_2')}>X^{\gamma}$, and taking $M=X^{\f{\gamma}{8}}$ and $\delta'_2$ sufficiently small, we get that the above five quantities are $\ll X^{\f{5}{6}+\ve}$, $\ll X^{\f{5}{6}+\ve}$, $\ll X^{1-\f{\gamma}{8}+\ve}+X^{\f{5}{6}+\f{\gamma}{8}+\ve}$, $\ll X^{\f{5}{6}+\ve}$, and $\ll X^{1-\f{\gamma}{8}+\ve}+X^{\f{5}{6}+\f{\gamma}{8}+\ve}$, respectively. This completes the proof of the lemma.
\end{proof}

\bibliographystyle{plain}
\bibliography{bib}

\end{document}